 \renewcommand{\epsilon}{\varepsilon}
 \newcommand{\newsection}[1]
  {\section{#1}\setcounter{theorem}{0} \setcounter{equation}{0}\par\noindent}
   \newtheorem{theorem}{Theorem}[section]
   \newtheorem{lemma}[theorem]{Lemma}
 \newtheorem{corr}[theorem]{Corollary}
 \newtheorem{proposition}[theorem]{Proposition}
 \newtheorem{deff}[theorem]{Definition}
 \newtheorem{remark}[theorem]{Remark}
  \numberwithin{equation}{section}
 \newcommand{\bth}{\begin{theorem}}
 \newcommand{\ble}{\begin{lemma}}
 \newcommand{\bcor}{\begin{corr}}
 \newcommand{\bdeff}{\begin{deff}}
 \newcommand{\bprop}{\begin{proposition}}
 \def\be{\begin{equation}}
\def\ee{\end{equation}}
\def\bt{\begin{theorem}}
\def\et{\end{theorem}}
\def\ba{\begin{array}}
\def\ea{\end{array}}
\def\bl{\begin{lemma}}
\def\el{\end{lemma}}
 \newcommand{\ele}{\end{lemma}}
 \newcommand{\ecor}{\end{corr}}
 \newcommand{\edeff}{\end{deff}}
 \newcommand{\eprop}{\end{proposition}}
 \renewcommand{\Pi}{\varPi}
 \renewcommand{\epsilon}{\varepsilon}
\title[Global regularity for Einstein-Klein-Gordon system] { Global regularity for Einstein-Klein-Gordon system with $U(1) \times \mathbb{R}$ isometry group, \uppercase\expandafter{\romannumeral2}}
\date{\today}
\begin{document}
\maketitle

\centerline{
\author{Haoyang Chen
  \footnote{School of Mathematical Sciences, Fudan University, Shanghai, China. {\it Email: hy{\textunderscore}chen15@fudan.edu.cn }}
 }
 \and
Yi Zhou
  \footnote{ *Corresponding Author: School of Mathematical Sciences, Fudan University, Shanghai, China.
 {\it Email: yizhou@fudan.edu.cn}
 }
  }

\begin{abstract}
This paper is devoted to the study of the global existence of smooth solutions for the 3+1 dimensional Einstein-Klein-Gordon systems with a $U(1) \times \mathbb{R}$ isometry group for a class of regular Cauchy data. In our first paper \cite{chen}, we reduce the Einstein equations to a 2+1 dimensional Einstein-wave-Klein-Gordon system. And we show that the first possible singularity can only occur at the axis. In this paper, we give a proof for the global regularity for the 2+1 dimensional system. Firstly, we show the non-concentration of the energy near the first possible singularity. Then, we prove that the global regularity holds for initial data with small energy.
\end{abstract}
{\bf Keywords: }Global regularity, Einstein-Klein-Gordon system, $U(1) \times \mathbb{R}$ isometry group, non-concentration of energy.

{\bf2010 MSC: } 35Q76; 35L70

\newsection{Introduction}
\subsection{Introduction and previous results}
Let $(^{(4)}{M},^{(4)}{g})$ be a 3+1 dimensional globally hyperbolic Lorentzian manifold which satisfies the following Einstein-scalar field equations:
\begin{equation}\label{1.1}
\begin{cases}
^{(4)}{G}_{\mu \nu}=^{(4)}{T}_{\mu\nu}=\partial_\mu \phi \partial_\nu \phi-\frac{1}{2} {^{(4)}{g}}_{\mu \nu}\partial^{\lambda} \phi \partial_{\lambda} \phi-{^{(4)}{g}}_{\mu\nu}V(\phi) \\
\Box_{^{(4)}{g}} \phi=V'(\phi)
\end{cases}
\end{equation}
where $^{(4)}{g}$ is the Lorentzian metric, $^{(4)}{G}_{\mu \nu}$ is the Einstein tensor of $^{(4)}{g}$, $^{(4)}{T}_{\mu\nu}$ is the stress-energy tensor given as above, $\phi$ is the scalar field, and we take the potential $V(\phi)=\frac{1}{2}m^2 \phi^2$.

Then, the equation that the scalar field satisfies is a Klein-Gordon equation, which makes the system an Einstein-Klein-Gordon system. The Einstein-Klein-Gordon system \eqref{1.1}  is equivalent to the following equations,
\begin{equation}\label{1.2}
\begin{cases}
^{(4)}{R}_{\mu \nu}=^{(4)}{\rho}_{\mu\nu} \triangleq \partial_\mu \phi \partial_\nu \phi+{^{(4)}{g}}_{\mu\nu} \frac{1}{2}m^2 \phi^2\\
\Box_{^{(4)}{g}} \phi=m^2 \phi
\end{cases}
\end{equation}
where $^{(4)}{R}_{\mu \nu}$ is the Ricci curvature tensor.

One fundamental open problem in the field of general relativity is the cosmic censorship conjectures by Penrose. Roughly speaking, the weak cosmic censorship may be formulated as follows: For generic asymptotically flat Cauchy data, of the vacuum equations or suitable Einstein-matter systems, the maximal development possesses a complete future null infinity. While the strong cosmic censorship states that the maximal Cauchy development is inextendible for generic initial data.

This question is partially related to the study of the formation of event horizons. Although still open in general, there is a series of results in spherical symmetric case by Christodoulou for the Einstein-scalar field system where the scalar field is massless, see \cite{christodoulou3}, \cite{christodoulou6}, etc.

 On the other hand, of our interest, it is related to the global well-posedness of the Cauchy problem in large. In \cite{christodoulou1}, Christodoulou proved the global existence of classical solutions for Einstein's equations in the spherically symmetric case with a massless scalar field on condition that the initial data are sufficiently small. Then, the smallness condition was removed in \cite{christodoulou5} and it was shown that a generalized solution exist globally in retarded time. The global nonlinear stability of Minkowski spacetime, for vacuum Einstein equations, was shown by Christodoulou and Klainerman in \cite{christodoulou4}. Lindblad and Rodnianski have proved the global stability of Minkowski spacetime for a massless Einstein-scalar field system in harmonic gauge in \cite{lindblad2}. Then in \cite{lefloch}, Lefloch and Ma show the nonlinear stability of Minkowski spacetime for the Einstein-Klein-Gordon system.

Based on \cite{chen}, this paper is devoted to study the global existence of smooth solutions for the 3+1 dimensional Einstein-Klein-Gordon systems with a $U(1) \times \mathbb{R}$ isometry group. The present work in this paper is motivated by research on the vacuum Einstein equations which is related to the study of wave map systems. We first provide some related results on the vacuum case.

For the 3+1 dimensional vacuum Einstein equations with one spacelike Killing field, as we can see in \cite{choquet3} and \cite{Moncrief1}, the Einstein equations can be reduced to a 2+1 dimensional Einstein-wave map system on a 2+1 dimensional Lorentzian manifold where the target manifold is the hyperbolic space $\mathbb{H}^2$. Yet the global existence problem of the Einstein-wave map system is still open. As a first step towards this global existence conjecture, Andersson, Gudapati and Szeftel proved that the global regularity holds for the equivariant case in \cite{Andersson}, by reference to some pioneering work on equivariant wave maps.

Shatah and Tahvildar-Zadeh have proved the global regularity for 2+1 dimensional equivariant wave maps with the target geodesically convex in \cite{shatah2}. This condition was later relaxed by Grillakis to include a certain class of nonconvex targets, see \cite{grillakis}. Their proof of regularity was also simplified later by Shatah and Struwe in \cite{shatah1}. They gave several more results on equivariant wave maps in the areas of existence and uniqueness, regularity, asymptotic behavior, development of singularities, and weak solutions, see \cite{shatah3}. Further, as an improvement of these above results, for target manifolds that do not admit nonconstant harmonic spheres, global existence of smooth solutions to the Cauchy problem for corotational wave maps with smooth equivariant data was shown by Struwe in \cite{Struwe1}.

 Then, for the 3+1 dimensional vacuum Einstein equations with $G_2$ symmetry, it was shown in \cite{berger} that the system reduce to a spherically symmetric wave map $u:\, \mathbb{R}^{2+1} \to \mathbb{H}^2$, where $\mathbb{R}^{1+2}$ is the 2+1 dimensional Minkowski spacetimes and the target $\mathbb{H}^2$ is the hyperbolic space. Thus the global regularity can be proved by the work of Christodoulou and Tahvildar-Zadeh \cite{christodoulou2} on 2+1 dimensional spherically symmetric wave maps. In \cite{christodoulou2}, the range of the wave map $u$ should be contained in a convex part of the target $N$. This restriction was later shown unnecessary by Struwe in \cite{Struwe3} as the target is the standard sphere. Further, Struwe give a more general result in \cite{Struwe2}, where the target is any smooth, compact Riemannian manifold without boundary. We refer to \cite{geba} for more results and references of wave maps.

\subsection{The 3+1 dimensional spacetime with $U(1) \times \mathbb{R}$ isometry group}

In this paper, we work on the Lorentzian manifold $^{(4)}{M}=\mathbb{R} \times \mathbb{R}^2 \times \mathbb{R}$ with a Lorentzian metric $^{(4)}{g}$ on it, and we consider the polarized case where the Killing fields of $(^{(4)}{M},^{(4)}{g})$ are hypersurface orthogonal. Then, with the existence of the translational Killing vector, the metric can be written in the following form,
\begin{equation*}
^{(4)}{g}=e^{-2\gamma}{^{(3)}g}+e^{2\gamma}{(dx^3)}^2
\end{equation*}
where $\partial_{x^3}$ is the translational spacelike Killing vector field.

As we mentioned before in \cite{choquet3} and \cite{Moncrief1}, the 3+1 dimensional vacuum Einstein equations with spacelike Killing field reduce to a 2+1 dimensional Einstein-wave map system with the target manifold $\mathbb{H}^2$. We gave a similar reduction for \eqref{1.1} to a 2+1 dimensional Einstein-wave-Klein-Gordon system in \cite{chen}, where $\gamma$ in $^{(4)}{g}$ satisfies a wave equation and the scalar field $\phi$ satisfies a Klein-Gordon equation. The equations will be given in section 2.

In the vacuum case with $G_2$ symmetry, the wave maps equations  reduced from the Einstein equations is a semilinear wave equations, for instance, special solutions of this case are the Einstein-Rosen waves, see \cite{cecile3} and references therein. While the major difficulty in our problem is that the wave equations are coupled with Einstein equations, which make the system quasilinear. Moreover, as we can see in the 2+1 dimensional equivariant Einstein-wave map system that Andersson studied in \cite{Andersson}, the coupled unknown $\phi$ which satisfies the wave maps equation vanishes at the axis $\Gamma$. While, there is no such regularity condition for the coupled unknowns $(\gamma,\phi)$ in our case, which brings difficulties in studying the regularity near the first possible singularity on the axis. However, we develop a way to solve this problem in 2+1 dimension with symmetry.

Particularly, if the scalar field is massless, we can remove the condition that the Killing vector field $\partial_{x^3}$ is hypersurface orthogonal, and the metric will take the general form
\begin{equation*}
^{(4)}{g}=e^{-2\gamma}{^{(3)}g}+e^{2\gamma}{(dx^3+A_{\alpha}dx^{\alpha})}^2.
\end{equation*}
We have mentioned in \cite{chen} that the system reduce to a wave map equations coupled with a linear wave equation on the Minkowski spacetimes, of which the problem left is to study the wave map system, same as in the vacuum case.

Now we assume that the reduced spacetime $(^{(3)}M,^{(3)}g)$ is a globally hyperbolic 2+1 dimensional spacetime with Cauchy surface diffeomorphic to $\mathbb{R}^2$, on which the reduced Einstein-wave-Klein-Gordon system is radially symmetric. And we assume that the $U(1)$ action on $M$ is generated by a hypersurface orthogonal Killing field $\partial_{\theta}$. In particular, we write the metric $^{(3)}g$ in the following form in this paper
\begin{eqnarray} \label{1.3}
^{(3)}g&=&-e^{2\alpha(t,r)} dt^2+e^{2\beta(t,r)}dr^2+r^2 d \theta^2\\ \nonumber
& \triangleq &\check{g}+r^2 d \theta^2.
\end{eqnarray}
 where $\check{g}$ is a metric on the orbit space  $\mathcal{Q}=M/ {{\mathbb{S}}^1}$ and $r$ is the radius function, defined such that $2\pi r(p)$ is the length of the ${\mathbb{S}}^1$ orbit through $p$.

 \subsection{The Cauchy data}

As we mentioned before, to study the Cauchy problem of the 3+1 dimensional Einstein-Klein-Gordon system, equivalently in some sense, we can consider the Cauchy problem of the reduced 2+1 dimensional Einstein-wave-Klein-Gordon system.

Now we introduce the definition of the Cauchy data set for the 2+1 dimensional Einstein-wave-Klein-Gordon systems with $U(1)$ isometry group as follows,
\begin{deff}[Cauchy data set for the 2+1 dimensional Einstein-wave-Klein-Gordon system with $U(1)$ isometry group] \label{def1.5}
A Cauchy data set for the 2+1 dimensional Einstein-wave-Klein-Gordon system with a $U(1)$ isometry group is a 7-tuple $(M_0,g_0,K,\gamma_0,\gamma_1,\phi_0,\phi_1)$ consisting of a Remannian 2-manifold $(M_0,g_0)$ with a spacelike rotational Killing vector field and a 2-tensor $K$ which is the second fundamental form and symmetric under the same action, $\gamma_0,\gamma_1$ are initial data for the wave equation that $\gamma$ satisfies, $\phi_0,\phi_1$ are initial data for the Klein-Gordon equation that the scalar field satisfies. $g_0, K$ are functions of $r$ only and the following constraints equations hold:
\begin{equation} \label{1.4}
R_0-K_{\alpha\beta}K^{\alpha\beta}+(trK)^2=2{^{(3)}}T_{\alpha\beta} n^\alpha n^\beta
\end{equation}
\begin{equation} \label{1.5}
D^{\beta} K_{\alpha\beta}-D_{\alpha} K^{\beta}_{\beta}={^{(3)}}{T}_{\alpha\mu} n^{\mu}
\end{equation}
where $n^{\alpha}(t,r)$ is the future directed unit normal, $R_0$ is the scalar curvature on $M_0$,$D_{\alpha}$ is the intrinsic covariant derivative on $M_0$, and $^{(3)}T_{\alpha\beta}$ is the stress-energy tensor for the reduced system.
\end{deff}

For a smooth solution of the 2+1 dimensional Einstein-wave-Klein-Gordon system with $U(1)$ symmetry, it must hold that $\alpha(t,r),\beta(t,r)$ are even functions of $r$. And we give some normalisation of the metric functions $\alpha(t,r)$ and $\beta(t,r)$ on the axis. It must hold that $\beta(t,0)=0$, in order to avoid a conical singularity at the axis $\Gamma$, which means that the perimeter of a circle
of radius $r$ grows like $2\pi cr$ at the axis, instead of $2\pi r$ in the
Euclidean metric. This condition can be realized by appropriately choosing the Cauchy data such that $\beta(0,0)=0$, see \cite{chen}. Further, $\alpha(t,0)$ is determined only up to a choice of time parametrization. We shall choose a time coordinate such that $\alpha(t,0)=0.$

Finding solutions to the constraint equations is a research area in itself. Note that Cecile has proved the existence of such constraint equations in vacuum with translational Killing vector field in \cite{cecile1}\cite{cecile2}, which is used in \cite{cecile3} to prove stability in exponential time of the Minkowski spacetime in this setting. In our case, we just briefly show that such data exist, without going further into the study of the constraints. We have constructed an asymptotically flat\footnote{We say the Cauchy data are 'asymptotically flat' in the sense of Andersson\cite{Andersson} here and hereafter.} Cauchy data set in \cite{chen}, with the energy less than $2\pi$ due to the constraint equations. Meanwhile, the initial data we constructed satisfies the following conditions, under which the solution yield a uniform lower bound for $\gamma$,
\begin{equation} \label{1.19}
\begin{cases}
\gamma_t(0,r)=\gamma_1(r) \geq 0,\\
\gamma(0,r)=\gamma_0(r) \geq 0,\quad\gamma_r(0,r)=\gamma_0'(r)> -\frac{1}{2} r^{-1}.
\end{cases}
\end{equation}

\subsection{The problem of global well-posedness}

The proof by Choquet-Bruhat and Geroch(see \cite{choquet1}\cite{choquet2}) of existence and uniqueness of maximal solutions to the Cauchy problem for the vacuum Einstein equations, together with the equivalence of the Cauchy data, can be generalized to our case as follows, which guaranteed the local well-posedness.
\begin{theorem} \label{thm1.5}
Let $(M_0,g_0,k,\gamma_0,\gamma_1,\phi_0,\phi_1)$ be the Cauchy data set for the 2+1 dimensional Einstein-wave-Klein-Gordon system with $U(1)$ isometry group. Then there is a unique, maximal Cauchy development $(^{(3)}M,^{(3)}g,\gamma,\phi)$ satisfying the the 2+1 dimensional Einstein-wave-Klein-Gordon system.
\end{theorem}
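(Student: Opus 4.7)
The proof follows the classical scheme of Choquet-Bruhat and Geroch \cite{choquet1,choquet2}, adapted to the reduced 2+1 dimensional Einstein-wave-Klein-Gordon system with $U(1)$ isometry group. The plan has four ingredients: local existence in a convenient gauge, propagation of the constraints, geometric uniqueness, and Zorn's lemma for the maximal development.

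For local existence, I would exploit the $U(1)$ symmetry and work in the polar chart $(t,r,\theta)$ in which $^{(3)}g$ takes the form \eqref{1.3}, so that all unknowns reduce to functions $\alpha(t,r),\beta(t,r),\gamma(t,r),\phi(t,r)$. The system then decomposes into a quasilinear wave equation for $\gamma$ and a Klein-Gordon equation for $\phi$ on the orbit space $\mathcal{Q}$, coupled to Einstein equations that, after imposing the gauge normalizations $\alpha(t,0)=\beta(t,0)=0$ together with the even-in-$r$ parity described before Definition \ref{def1.5}, reduce to first-order radial ODEs for $(\alpha,\beta)$ whose right-hand sides are quadratic in the first derivatives of $(\gamma,\phi)$. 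Regarding $(\alpha,\beta)$ as nonlocal functionals of $(\gamma,\phi)$ obtained by integrating outward from the axis, the full system becomes a quasilinear hyperbolic system for $(\gamma,\phi)$, for which smooth local solvability on a slab $[0,T_\ast)\times M_0$ follows from the standard energy-method theory for quasilinear wave equations, with axial regularity ensured by the parity conventions and, where necessary, by passing to smooth Cartesian-type coordinates on $M_0$.

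Propagation of the constraint equations \eqref{1.4}-\eqref{1.5} then follows from the contracted second Bianchi identity $\nabla^{\mu}\,{^{(4)}G}_{\mu\nu}=0$ together with the Klein-Gordon equation: these yield a linear homogeneous symmetric hyperbolic system for the constraint quantities whose initial data vanish by hypothesis, so the constraints persist and the solution produced above solves the full system \eqref{1.2}. Geometric uniqueness is obtained by pulling any two smooth Cauchy developments into a common wave gauge compatible with the polar chart and with the rotational Killing field, and invoking uniqueness for the reduced quasilinear system. Finally, ordering smooth Cauchy developments by isometric extension, one applies Zorn's lemma---every chain admits an upper bound given by the union of its members, glued via geometric uniqueness---to obtain a maximal development, unique up to isometry.

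The main technical obstacle throughout is the degeneracy of the polar chart and of the radial ODE system at the axis $r=0$; this must be navigated both in the local existence step (to ensure smoothness across $r=0$ rather than merely on $r>0$) and in the uniqueness/gluing step (so that chart-dependent constructions respect the smooth structure of $^{(3)}M$). It is handled via the parity and normalization conventions on $(\alpha,\beta,\gamma,\phi)$ built into the definition of a $U(1)$-symmetric Cauchy datum and already used in \cite{chen}, which together guarantee that orbit-space solutions lift to genuinely smooth solutions on the 2+1 manifold.
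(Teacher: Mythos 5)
Your proposal is correct and follows essentially the same route as the paper, which offers no detailed proof of Theorem \ref{thm1.5} but simply asserts that the Choquet-Bruhat--Geroch scheme (local existence in a gauge, constraint propagation, geometric uniqueness, and a maximality argument) generalizes to the reduced $U(1)$-symmetric system; your sketch is a faithful and reasonable elaboration of exactly that program, including the correct treatment of $(\alpha,\beta)$ as radial integrals of the data $(\gamma,\phi)$ via \eqref{2.10}--\eqref{2.12} and the axis regularity issues handled by the parity and normalization conventions.
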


Now we state the main theorem
\begin{theorem} \label{thm1.6}
Let $(^{(3)}M,^{(3)}g)$ be the maximal Cauchy development of a regular Cauchy data set aforementioned in section 1.3 for the 2+1 dimensional Einstein-wave-Klein-Gordon system. Then there is a global in time smooth solution for the Cauchy problem of the equations.
\end{theorem}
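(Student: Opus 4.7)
The plan is to argue by contradiction. Let $T^{*}$ be the maximal time of existence provided by Theorem \ref{thm1.5}, and suppose $T^{*} < \infty$. By the reduction and the first-singularity analysis in the companion paper \cite{chen}, the first singularity can only form on the axis $\Gamma=\{r=0\}$, so after translation we may place the putative singular point at $(T^{*},0)$. The goal is to extend the solution past $T^{*}$ and thereby produce a contradiction.

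Before attacking the singularity, I would establish a small-energy global existence theorem: there exists $\varepsilon_{0}>0$ such that whenever the total energy of the Cauchy data is below $\varepsilon_{0}$, the maximal development is global and smooth. The proof combines standard energy estimates for the wave equation satisfied by $\gamma$ and the Klein--Gordon equation satisfied by $\phi$ with control of the quasilinear coefficients $\alpha,\beta$ through the constraint equations \eqref{1.4}--\eqref{1.5} and the evolution equations (to be derived in section 2). The normalizations $\alpha(t,0)=\beta(t,0)=0$ together with the sign conditions \eqref{1.19} should yield the uniform lower bound on $\gamma$ that prevents the metric $\check g$ from degenerating, and a bootstrap closed by smallness of the energy handles the quasilinear coupling.

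The heart of the argument is non-concentration of energy at the putative tip. Introduce the truncated backward null cone $K(t)=\{(s,r):\,t\le s<T^{*},\ r\le T^{*}-s\}$ and its time slice $B(t)=\{r\le T^{*}-t\}$. Applying the divergence identity to the stress-energy tensor contracted with the timelike multiplier $\partial_{t}$ gives monotonicity of the energy flux through the lateral boundary of $K(t)$, hence the flux tends to zero as $t\to T^{*}$. Following the scheme of Shatah--Struwe \cite{shatah1}, Christodoulou--Tahvildar-Zadeh \cite{christodoulou2} and Andersson--Gudapati--Szeftel \cite{Andersson}, I would then construct a Morawetz/dilation-type multiplier adapted to the quasilinear metric $\check g$ and use the resulting virial identity, combined with the flux decay, to prove
\begin{equation*}
\lim_{t\to T^{*}}E\bigl(t;B(t)\bigr)=0,
\end{equation*}
where $E(t;B(t))$ is the natural conserved energy of $(\gamma,\phi)$ and of the geometric variables on the slice. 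The rotational symmetry and the 2+1 dimensional setting reduce the dynamics to a single spatial variable and produce the weighted Sobolev estimates near $\Gamma$ that are needed to absorb the absence of a vanishing condition for $\gamma$ on the axis. Once non-concentration is in hand, choose $t_{0}<T^{*}$ so that the energy on $B(t_{0})$ is below $\varepsilon_{0}$; applying the small-energy theorem to this cap and gluing it to the smooth solution existing away from the axis up to $T^{*}$ (provided by \cite{chen}) produces a smooth extension past $T^{*}$, contradicting maximality.

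The principal obstacle is the non-concentration step. Unlike the equivariant wave-map case treated in \cite{Andersson}, the unknown $\gamma$ need not vanish at the axis, so the usual Pohozaev/Morawetz identities must be carried out in the quasilinear metric $\check g$ rather than on Minkowski space, and the resulting bulk terms must be estimated in weighted norms that remain finite on $\Gamma$. Proving the coercivity of these identities in the presence of the Klein--Gordon mass term $m^{2}\phi^{2}$ — which, unlike the massless wave-map setting, contributes a non-scale-invariant potential — and controlling the curvature terms generated by commuting the multiplier with the wave operator of $\check g$, is where I expect the technical heart of the argument to lie.
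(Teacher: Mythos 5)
Your proposal follows essentially the same architecture as the paper: reduce to a first possible singularity on the axis, prove non-concentration of energy at the tip via the stress-energy/vector-field method in the spirit of Andersson--Gudapati--Szeftel (the paper's Section 3, using the multipliers $P_T$, $P_R$, $r\partial_r$ and a multiplier built from $U-U(-t_1,r_2(-t_1))$), establish a small-energy global regularity theorem (the paper's Section 4, Theorem \ref{thm5.1}), and combine the two to extend past $T^*$. The only caveat is that the small-energy step in the paper is not a soft energy bootstrap but a delicate pointwise analysis of weighted quantities such as $\sup r^{\delta}|U_v|$ via the fundamental solution of the $2+1$ dimensional wave equation; your outline is nevertheless faithful to the paper's overall strategy.
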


\begin{remark} \label{remark1.6}
In our present work above, we have constructed solutions where $(^{(4)}{M},{^{(4)}{g}})$ satisfies the Einstein-scalar field equations equivalently.
\end{remark}

For our further consideration, it is worthwhile to study the asymptotic behaviour of the solution obtained in Theorem \ref{thm1.6}, which is related with the future causal geodesic completeness of the 3+1 dimensional spacetime $(^{(4)}{M},^{(4)}{g})$.

The paper is organized as follows. In Section 2, we reduce the 3+1 dimensional Einstein-Klein-Gordon system with $U(1) \times \mathbb{R}$ isometry group to a 2+1 dimensional Einstein-wave-Klein-Gordon system on $(M,g)$ with $U(1)$ symmetry. In Section 3, we prove that the energy cannot concentrate near the first possible singularity. In Section 4, we give a proof that small energy implying global regularity.

\section{The 2+1 dimensional Einstein-wave-Klein-Gordon system}
Consider the Einstein equations \eqref{1.2}, where the spacetime $(^{(4)}{M},^{(4)}{g})$ admits a spacelike translational Killing vector field. As in \cite{chen}, the Einstein-Klein-Gordon system can be reduced to a 2+1 dimensional Einstein-wave-Klein-Gordon system in a similar way  as is well known. We give the equations in local coordinate system in this section.

\subsection{Equations in $(t,r)$ coordinates system}
When the metric takes the form \eqref{1.3}, we give the computation of the Einstein tensor $G_{\alpha\beta}$ ,
\begin{eqnarray*}
&&G_{00}=\frac{1}{r}e^{2\alpha-2\beta} \beta_r, \\
&&G_{01}=\frac{1}{r} \beta_t, \\
&&G_{11}=\frac{1}{r} \alpha_r, \\
&&G_{22}=r^2 \left( e^{-2\beta} \alpha_{rr}-e^{-2\alpha} \beta_{tt}+e^{-2\beta}\alpha_r (\alpha_r-\beta_r)+e^{-2\alpha} \beta_t (\alpha_t-\beta_t) \right), \\
&&G_{02}=0, \\
&&G_{12}=0.
\end{eqnarray*}
 and the stress energy tensor $T_{\alpha\beta}$,
 \begin{eqnarray*}
&&T_{00}={\gamma_t}^2+e^{2\alpha-2\beta}{\gamma_r}^2+\frac{1}{2}{\phi_t}^2+\frac{1}{2} e^{2\alpha-2\beta} {\phi_r}^2+\frac{m^2}{2}e^{2\alpha-2\gamma}\phi^2 \\
&&=e^{2\alpha} \mathbf{e},\\
&&T_{01}=2\gamma_t \gamma_r+\phi_t\phi_r=e^{(\alpha+\beta)} \mathbf{m}, \\
&&T_{11}=e^{2\beta-2\alpha}{\gamma_t}^2+{\gamma_r}^2+\frac{1}{2}e^{2\beta-2\alpha}{\phi_t}^2+\frac{1}{2} {\phi_r}^2-\frac{m^2}{2}e^{2\beta-2\gamma}\phi^2,\\
&&T_{22}=r^2 e^{-2\alpha}{\gamma_t}^2-r^2 e^{-2\beta}{\gamma_r}^2+\frac{r^2}{2}e^{-2\alpha}{\phi_t}^2-\frac{r^2}{2}e^{-2\beta} {\phi_r}^2-\frac{m^2}{2}r^2 e^{-2\gamma}\phi^2,\\
&&T_{02}=0, \\
&&T_{12}=0.
\end{eqnarray*}

We write the 2+1 dimensional radially symmetric Einstein-wave-Klein-Gordon system in local coordinates,
 \begin{equation} \label{2.10}
{\beta}_r=re^{2\beta-2\alpha}{\gamma_t}^2+r{\gamma_r}^2+\frac{r}{2}e^{2\beta-2\alpha}{\phi_t}^2+\frac{r}{2}{\phi_r}^2+\frac{m^2}{2}re^{2\beta-2\gamma} \phi^2
\end{equation}
\begin{equation} \label{2.11}
\beta_t=2r\gamma_t \gamma_r+r \phi_t \phi_r
\end{equation}
\begin{equation} \label{2.12}
\alpha_r=re^{2\beta-2\alpha}{\gamma_t}^2+r{\gamma_r}^2+\frac{r}{2}e^{2\beta-2\alpha}{\phi_t}^2+\frac{r}{2}{\phi_r}^2-\frac{m^2}{2}re^{2\beta-2\gamma} \phi^2
\end{equation}
\begin{eqnarray} \label{2.13}
 &&e^{-2\beta} \alpha_{rr}-e^{-2\alpha} \beta_{tt}+e^{-2\beta} \alpha_r(\alpha_r-\beta_r)+e^{-2\alpha} \beta_t (\alpha_t-\beta_t) \\ \nonumber
 &&=-\frac{m^2}{2}e^{-2\gamma} \phi^2+ e^{-2\alpha}({\gamma_t}^2+\frac{1}{2}{\phi_t}^2)-e^{-2\beta}({\gamma_r}^2+\frac{1}{2}{\phi_r}^2)
 \end{eqnarray}
\begin{eqnarray} \label{2.14}
\Box_{^{(3)}g}\gamma &=& -e^{-2\alpha}(\gamma_{tt}+(\beta_t-\alpha_t) \gamma_t)+e^{-2\beta} (\gamma_{rr}+\frac{\gamma_r}{r}+(\alpha_r-\beta_r) \gamma_r) \\ \nonumber
&=& -\frac{m^2}{2}e^{-2\gamma}\phi^2
\end{eqnarray}
\begin{eqnarray} \label{2.15}
\Box_{^{(3)}g} \phi &=& -e^{-2\alpha}(\phi_{tt}+(\beta_t-\alpha_t) \phi_t)+e^{-2\beta} (\phi_{rr}+\frac{\phi_r}{r}+(\alpha_r-\beta_r) \phi_r) \\ \nonumber
&=& m^2e^{-2\gamma} \phi.
\end{eqnarray}

\subsection{Null coordinates}
In this section, we write the equations in a null coordinate system introduced in \cite{chen}, in which the wave equations may be written in a classical form in the flat case. In the following part, we assume that all objects are smooth, unless otherwise stated.

 Let $(\mathcal{Q},\check{g})$ be the orbit space, where
 \begin{displaymath}
 \mathcal{Q}=M/{{\mathbb{S}}^1}
 \end{displaymath}
 and
 \begin{equation*}
 \check{g}=-e^{2\alpha} dt^2+e^{2\beta} dr^2.
 \end{equation*}

In \cite{chen}, we constructed a null coordinate system with respect to which $\check{g}$ takes the form
 \begin{displaymath}
 \check{g}=-e^{2\lambda}(u,v)dudv
 \end{displaymath}
 which means that the 3-dimensional manifold $(M,g)$ admits a coordinate system $(u,v,\theta)$ such that $g$ takes the form
  \begin{displaymath}
g=-e^{2\lambda}(u,v)dudv+r^2(u,v)d\theta^2
 \end{displaymath}
 where now $d\theta^2$ is the line element on the $\mathbb{S}^1$ symmetry orbit. Meanwhile, the null coordinate system satisfies initial boundary conditions,
\begin{equation} \label{4.6.1}
 \begin{cases}
t=0: u=-v\\
r=0: u=t,\,v=t
 \end{cases}
 \end{equation}

In null coordinates, the components of the Einstein tensor take the following form
\begin{eqnarray*}
&&G_{00}=-e^{2\lambda}r^{-1} \partial_u(e^{-2\lambda}\partial_u r), \\
&&G_{01}=r^{-1}\partial_u\partial_v r, \\
&&G_{11}=-e^{2\lambda}r^{-1} \partial_v(e^{-2\lambda}\partial_v r), \\
&&G_{22}=-4r^2 e^{-2\lambda}\partial_u\partial_v \lambda.
\end{eqnarray*}
Other components are zero.

Thus, rewritting the Einstein-wave-Klein-Gordon system \eqref{2.10}-\eqref{2.15} in null coordinates, we can get
\begin{equation} \label{1.12}
\partial_u(e^{-2\lambda} \partial_u r)=-e^{-2\lambda} r(2{\gamma_u}^2+{\phi_u}^2)
\end{equation}
\begin{equation} \label{1.13}
r_{uv}=\frac{m^2}{4}r e^{2\lambda-2\gamma} {\phi}^2
\end{equation}
\begin{equation} \label{1.14}
\partial_v(e^{-2\lambda} \partial_v r)=-e^{-2\lambda} r(2{\gamma_v}^2+{\phi_v}^2)
\end{equation}
\begin{equation} \label{1.15}
\lambda_{uv}=-\gamma_u \gamma_v-\frac{1}{2} \phi_u \phi_v+\frac{m^2}{8} e^{2\lambda-2\gamma} {\phi}^2
\end{equation}
\begin{equation} \label{1.16}
\partial_u (r \partial_v \gamma)+\partial_v (r \partial_u \gamma)=\frac{m^2}{4}re^{2\lambda-2\gamma} {\phi}^2
\end{equation}
\begin{equation} \label{1.17}
\partial_u (r \partial_v \phi)+\partial_v (r \partial_u \phi)=-\frac{m^2}{2}r e^{2\lambda-2\gamma} \phi
\end{equation}
with stress-energy tensor,
\begin{eqnarray*}
&&T_{00}=2{\gamma_u}^2 +{\phi_u}^2, \\
&&T_{01}=\frac{m^2}{4} e^{2\lambda-2\gamma} {\phi}^2, \\
&&T_{11}=2{\gamma_v}^2 +{\phi_v}^2, \\
&&T_{22}=4r^2 e^{-2\lambda}\gamma_u \gamma_v+2r^2e^{-2\lambda} \phi_u \phi_v-r^2 e^{-2\gamma}\frac{m^2}{2}{\phi}^2.
\end{eqnarray*}

 Then, let us define
 \begin{displaymath}
 T=\frac{u+v}{2},\, R=\frac{v-u}{2}.
 \end{displaymath}

We can also rewrite the system in $(T,R)$ coordinates which reads
\begin{equation} \label{1.6}
\frac{r_T}{r} \lambda_T+\frac{r_R}{r} \lambda_R-\frac{r_{RR}}{r}={\gamma_T}^2+{\gamma_R}^2+\frac{1}{2}{\phi_T}^2+\frac{1}{2}{\phi_R}^2+e^{2\lambda-2\gamma} \frac{m^2}{2}{\phi}^2
\end{equation}
\begin{equation} \label{1.7}
-\frac{r_{TR}}{r}+\frac{r_T}{r} \lambda_R+\frac{r_R}{r} \lambda_T=2\gamma_T \gamma_R+\phi_T \phi_R
\end{equation}
\begin{equation} \label{1.8}
\frac{r_T}{r} \lambda_T+\frac{r_R}{r} \lambda_R-\frac{r_{TT}}{r}={\gamma_T}^2+{\gamma_R}^2+\frac{1}{2}{\phi_T}^2+\frac{1}{2}{\phi_R}^2-e^{2\lambda-2\gamma} \frac{m^2}{2}{\phi}^2
\end{equation}
\begin{equation} \label{1.9}
e^{-2\lambda} \lambda_{RR}-e^{-2\lambda} \lambda_{TT} =-e^{-2\gamma} \frac{m^2}{2}{\phi}^2+e^{-2\lambda}{\gamma_T}^2-e^{-2\lambda}{\gamma_R}^2+\frac{1}{2} e^{-2\lambda} {\phi_T}^2-\frac{1}{2} e^{-2\lambda} {\phi_R}^2
\end{equation}
\begin{eqnarray} \label{1.10}
\Box_{g}\gamma &=& -e^{-2\lambda}\gamma_{TT}+e^{-2\lambda} \gamma_{RR}-e^{-2\lambda}\frac{r_T}{r} \gamma_T+e^{-2\lambda} \frac{r_R}{r} \gamma_R \\ \nonumber
&=& -e^{-2\gamma}\frac{m^2}{2}{\phi}^2
\end{eqnarray}
\begin{eqnarray} \label{1.11}
\Box_{g} \phi &=& -e^{-2\lambda}\phi_{TT}+e^{-2\lambda} \phi_{RR}-e^{-2\lambda}\frac{r_T}{r} \phi_T+e^{-2\lambda} \frac{r_R}{r} \phi_R\\ \nonumber
&=& e^{-2\gamma} m^2 \phi
\end{eqnarray}
with the stress-energy tensor $T_{\mu\nu}$,
\begin{eqnarray*}
&&T_{00}={\gamma_T}^2+{\gamma_R}^2+\frac{1}{2}{\phi_T}^2+\frac{1}{2}  {\phi_R}^2+e^{2\lambda-2\gamma}\frac{m^2}{2}{\phi}^2=e^{2\lambda} \tilde{\mathbf{e}}, \\
&&T_{01}=2\gamma_T \gamma_R+ \phi_T \phi_R, \\
&&T_{11}={\gamma_T}^2+{\gamma_R}^2+\frac{1}{2}{\phi_T}^2+\frac{1}{2} {\phi_R}^2-e^{2\lambda-2\gamma}\frac{m^2}{2}{\phi}^2, \\
&&T_{22}=r^2 e^{-2\lambda}{\gamma_T}^2-r^2 e^{-2\lambda}{\gamma_R}^2+\frac{r^2}{2}e^{-2\lambda}{\phi_T}^2-\frac{r^2}{2}e^{-2\lambda} {\phi_R}^2-r^2 e^{-2\gamma}\frac{m^2}{2}{\phi}^2.
\end{eqnarray*}

Let $U=(\gamma,\phi)$, rewrite equations $(\ref{1.10})(\ref{1.11})$ in the following form on the Minkowski spacetimes $(\mathbb{R}^{1+2},m)$,
\begin{eqnarray} \label{1.18}
\Box_{m} U &=& -U_{TT}+U_{RR}+ \frac{1}{R} U_R\\ \nonumber
&=& \mathcal{H}+\frac{r_T}{r} U_T-\left(\frac{r_R}{r}-\frac{1}{R}\right) U_R \\ \nonumber
& \triangleq & h
\end{eqnarray}
where $\mathcal{H}=\mathcal{H}_1 \triangleq -e^{2\lambda-2\gamma} \frac{m^2}{2} \phi^2$ when we consider the wave equation \eqref{1.10} of $\gamma$, and similarly for \eqref{1.11} we have $\mathcal{H}=\mathcal{H}_2 \triangleq e^{2\lambda-2\gamma} m^2 \phi.$

\section{Non-concentration of energy}
 As a preliminary part, the energy estimates of the Einstein-wave-Klein-Gordon system have been given in \cite{chen}. We have shown the conservation and the monotonicity of the energy. Using the energy estimates, we proved that the metric functions $\alpha,\beta$ and $\lambda$ are  uniformly bounded. We will recall some notations in this section. For simplicity, we denote $(^{(3)}M,^{(3)}g)$ by $(M,g)$ from now on.

Let us define the energy on the Cauchy surface $\Sigma_t$,
\begin{eqnarray*}
E(t)&:=&\int_{\Sigma_t} \mathbf{e} {\bar{\mu}}_q \\
&=& 2\pi \int_0^{\infty} \mathbf{e}(t,r) r e^{\beta(t,r)} dr,
\end{eqnarray*}
the energy in a coordinate ball $B_r$,
\begin{eqnarray*}
E(t,r)& :=& \int_{B_r} \mathbf{e} {\bar{\mu}}_q \\
&=&2\pi \int_0^r \mathbf{e}(t,r') r' e^{\beta(t,r')} d r',
\end{eqnarray*}
the energy inside the causal past $J^{-}(O)$ of $O$,
\begin{displaymath}
E^O(t):=\int_{{\Sigma_t} \cap {J^-(O)}} \mathbf{e} {\bar{\mu}}_q
\end{displaymath}
with $O$ the first possible singularity. In this section, by shifting time we may assume that $O$ is the origin.

We aim to show the following theorem in this section.
\begin{theorem} \label{thm3.1}
Let $(M,g)$ be the maximal Cauchy development of the Cauchy data set in Theorem \ref{thm1.6}. Then, the energy of the Einstein-wave-Klein-Gordon system \eqref{2.10}-\eqref{2.15} cannot concentrate, i.e. $E^O(t)\to 0$ as $t \to 0$, where $O$ is the first possible singularity.
\end{theorem}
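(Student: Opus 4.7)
The plan is to combine the monotonicity of the energy in the backward cone $J^-(O)$, which is established in the companion paper \cite{chen}, with a Pohozaev/virial identity in the null coordinate system of Section 2.2, in the spirit of Shatah--Tahvildar-Zadeh \cite{shatah2}. After the shift placing $O$ at the origin, $\partial J^-(O)$ near the vertex consists of two null segments $\mathcal{C}^-_\pm$ in $(u,v)$ coordinates, meeting at the axis $\Gamma=\{u=v\}$. Since $E^O(t)$ is nonnegative, bounded and monotone nonincreasing (because the flux of $T_{\mu\nu}(\partial_t)^\nu$ escapes through $\mathcal{C}^-$ with the correct sign by the dominant energy condition), the limit $E^O_0:=\lim_{t\to 0^-}E^O(t)$ exists. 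Applying the divergence identity to the energy current in the frustum between $t_1<t_2<0$ shows that the characteristic flux through $\mathcal{C}^-$ equals $E^O(t_1)-E^O(t_2)$, which tends to $0$. Reading off the components of $T_{\mu\nu}$ from Section 2.2, this forces each of the individually nonnegative contributions
\begin{equation*}
\int_0^{|t|}\!\! r\bigl(2\gamma_v^2+\phi_v^2\bigr)\,dv,\quad \int_{-|t|}^0\!\! r\bigl(2\gamma_u^2+\phi_u^2\bigr)\,du,\quad \int_{\mathcal{C}^-}\!\! r\,\tfrac{m^2}{4}e^{2\lambda-2\gamma}\phi^2
\end{equation*}
to vanish in the limit, so the characteristic fluxes of the good derivatives on $\mathcal{C}^-$ go to zero.

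To convert this null-flux decay into decay of the solid cone energy $E^O(t)$, I would test the wave equations \eqref{1.16}--\eqref{1.17} against the dilation multiplier $XU:=uU_u+vU_v$, with $U=(\gamma,\phi)$, and integrate over the truncated backward cone $J^-(O)\cap\{t_1\le t\le 0\}$. After integration by parts in both $u$ and $v$, the boundary term on $\{t=t_1\}$ reproduces $|t_1|\,E^O(t_1)$ up to factors controlled by the uniform bounds on $\alpha,\beta,\lambda$ of \cite{chen}; the boundary terms on $\mathcal{C}^-_\pm$ are majorised by the null fluxes already shown to vanish; and the bulk contribution takes the schematic form
\begin{equation*}
\iint_{J^-(O)\cap\{t_1\le t\le 0\}}|XU|\,\Bigl(|\mathcal{H}|+|\partial\lambda|\,|\partial U|+\tfrac{|\partial r|}{r}\,|\partial U|\Bigr)\,e^{2\lambda}\,du\,dv,
\end{equation*}
which is $O\bigl(|t_1|\cdot\sup_{t_1\le t\le 0}E^O(t)\bigr)$ because the Klein--Gordon source $\mathcal{H}$ in \eqref{1.18} carries an extra factor of $r\sim R\lesssim|t_1|$ and $|\partial\lambda|,|\partial r|/r$ are energy-integrable via the constraint-like equations \eqref{1.12}--\eqref{1.15}. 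Dividing the resulting identity by $|t_1|$ and sending $t_1\to 0^-$ yields $E^O_0=0$.

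The main obstacle, as I anticipate it, is that, in contrast with the equivariant wave map setting of \cite{Andersson,shatah2}, neither $\gamma$ nor $\phi$ is required to vanish on the axis $\Gamma$, so the virial identity picks up a nontrivial axial boundary contribution. I would handle this by combining the evenness of $\gamma,\phi$ in $r$ with the gauge normalisation $\alpha(t,0)=\beta(t,0)=0$: the odd-in-$r$ pieces of the axial boundary flux cancel, and the even piece reduces, after integration by parts along $\Gamma$, to a total time derivative whose net contribution over $[t_1,0]$ is $O(|t_1|)$ times quantities controlled uniformly in $t_1$ by the energy estimate. This is enough to absorb the axial term into the right-hand side of the virial identity and close the limit $E^O_0=0$.
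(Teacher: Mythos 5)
Your first half (monotonicity of $E^O$, existence of the limit, and decay of the characteristic flux of $P_T$ through the mantle) is exactly the paper's Propositions \ref{prop3.3} and \ref{prop3.4}, and your overall strategy of following Shatah--Tahvildar-Zadeh/Andersson is the right one. The gap is in the claim that a single application of the dilation multiplier $uU_u+vU_v$ then yields $E^O_0=0$. Two things go wrong. First, the boundary term of the dilation identity on $\{t=t_1\}$ is not $|t_1|E^O(t_1)$: it is $\int(|t_1|\mathbf{e}+r\mathbf{m})\,\bar{\mu}_q$, which near the mantle $r\approx|t_1|$ degenerates to $|t_1|(\mathbf{e}-\mathbf{m})$ and therefore only sees the outgoing derivative there; it can be $o(|t_1|)$ while $E^O(t_1)$ stays bounded away from zero. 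This is precisely why the paper must first prove Proposition \ref{prop3.6} (non-concentration in the annulus $\lambda|t|\le r\le|t|$) by an entirely different mechanism --- the characteristic transport/Gr\"onwall estimates for $\hat{\mathcal{F}}=e^{\beta}\sqrt{r(\mathbf{e}-\mathbf{m})}$ and $\hat{\mathcal{G}}=e^{\beta}\sqrt{r(\mathbf{e}+\mathbf{m})}$ built from \eqref{3.26}--\eqref{3.27} --- and your proposal contains no substitute for this step. Second, the bulk term of the dilation identity is not $O(|t_1|\cdot\varepsilon\sup E^O)$: the dominant contribution comes from ${}^{(X)}\pi^{\mu\nu}T_{\mu\nu}$ (equivalently the $r\partial_r$ identity \eqref{3.14}), which equals $2e^{-2\alpha}\gamma_t^2+e^{-2\alpha}\phi_t^2-\mathbf{f}$ with an $O(1)$ coefficient; it is the same size as the quantity you are trying to bound and cannot be absorbed. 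The correct use of the identity is to exploit the sign of that term to deduce only $\tfrac{1}{|t_1|}\int_{K}\mathbf{e}_{kin}\,\bar{\mu}_g\to 0$ (Proposition \ref{prop3.7}).

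Consequently a second, independent multiplier is unavoidable to control the radial part $\tfrac{1}{|t_1|}\int_K e^{-2\beta}(2\gamma_r^2+\phi_r^2)\,\bar{\mu}_g$. The paper does this in Proposition \ref{prop3.8} by testing with $2\gamma^{\nu}(\gamma-\gamma(-t_1,r_2(-t_1)))+\phi^{\nu}(\phi-\phi(-t_1,r_2(-t_1)))$, i.e.\ $U$ shifted by its value at the corner of the cone; the non-vanishing of $U$ on the axis is then handled not by parity cancellations (over the solid cone the measure $r\,dr\,d\theta$ degenerates at $r=0$, so there is no axial boundary term in the dilation identity at all) but by the pointwise bound $r|U|^2\le C$ of \eqref{3.29}, the $L^4$ bound \eqref{3.21}, and weighted Hardy-type integrations by parts. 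Your axial-parity argument addresses a term that does not arise, while the actual difficulty --- that the $U$-multiplier produces boundary and bulk terms involving $U$ itself, which is only bounded like $r^{-1/2}$ --- is resolved by those Sobolev-type bounds. Only after Propositions \ref{prop3.5}, \ref{prop3.7} and \ref{prop3.8} give $\tfrac{1}{|t_1|}\int_K\mathbf{e}\,\bar{\mu}_g\to 0$ does monotonicity upgrade the time-averaged statement to $E^O(t)\to 0$.
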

The proof of the non-concentration of energy will be performed in a similar scheme as Andersson did in \cite{Andersson}.

\subsection{The Vector field Method}
Let $X$ be a vector field on $M$. Set the corresponding momentum $P_X$ as follows
\begin{equation} \label{3.7}
P_X^{\mu}=T^{\mu}_{\nu} X^{\nu},
\end{equation}
then, we have
\begin{equation} \label{3.8}
\nabla_{\nu} P_X^{\nu}=X^{\mu} \nabla_{\nu} T^{\nu}_{\mu}+T^{\nu}_{\mu} \nabla_{\nu} X^{\mu}.
\end{equation}
Since the stress-energy tensor $T_{\mu\nu}$ satisfies
\begin{displaymath}
\nabla^{\mu} T_{\mu\nu}=0,
\end{displaymath}
the first term in the right hand side vanishes, hence
\begin{eqnarray*}
\nabla_{\nu} P_X^{\nu}&=&T^{\mu\nu} \nabla_{\mu} X_{\nu} \\
&=& \frac{1}{2}{ ^{(X)}{\pi}_{\mu\nu} T^{\mu\nu}},
\end{eqnarray*}
where the deformation tensor $ ^{(X)} {\pi}_{\mu\nu}$ is defined by
\begin{eqnarray*}
 ^{(X)} {\pi}_{\mu\nu} &:=& \nabla_{\mu} X_{\nu}+\nabla_{\nu} X_{\mu} \\
 &=& g_{\sigma\nu} \partial_{\mu} X^{\sigma}+g_{\sigma\mu} \partial_{\nu} X^{\sigma}+X^{\sigma} \partial_{\sigma} g_{\mu\nu}.
\end{eqnarray*}

In the following let us calculate the divergence of $P_X$ for various choices of $X$. Consider $T=e^{-\alpha}\partial_t$, the corresponding momentum $P_T$ is
\begin{equation} \label{3.9}
P_T=-e^{-\alpha} \mathbf{e} \partial_t+e^{-\beta} \mathbf{m} \partial_r.
\end{equation}
We have shown that $P_T$ is divergence free,
\begin{eqnarray} \label{3.10}
\nabla_{\nu} P_T^{\nu}&=&e^{-\alpha} \beta_t (\mathbf{e}-\mathbf{f})-e^{-\beta}\alpha_r \mathbf{m} \\ \nonumber
&=& 0,
\end{eqnarray}
where
\begin{displaymath}
\mathbf{f}:=m^2 e^{-2\gamma} \phi^2.
\end{displaymath}

Equivalently,
\begin{eqnarray} \label{3.4}
\nabla_{\nu} P_T^{\nu}&=&\frac{1}{\sqrt{|g|}} \partial_{\nu} \left( \sqrt{|g|} P_T^{\nu} \right) \\ \nonumber
&=& \frac{1}{r e^{\beta+\alpha}} \left(-\partial_t (r e^{\beta} \mathbf{e})+\partial_r(re^{\alpha} \mathbf{m})\right)\\ \nonumber
&=& 0.
\end{eqnarray}

For $R=e^{-\beta}\partial_r$, the corresponding momentum $P_R$ is
\begin{equation} \label{3.11}
P_R=-e^{-\alpha} \mathbf{m} \partial_t+e^{-\beta} (\mathbf{e}-\mathbf{f}) \partial_r.
\end{equation}
the divergence of $P_R$ is
\begin{eqnarray} \label{3.12}
\nonumber
\nabla_{\nu} P_R^{\nu}&=& \frac{1}{2}{ ^{(R)}{\pi}_{\mu\nu} T^{\mu\nu}}\\
&=& -e^{-\beta} \alpha_r \mathbf{e}+e^{-\alpha}\beta_t \mathbf{m}+\frac{1}{2r}e^{-\beta}(2e^{-2\alpha} {\gamma_t}^2+e^{-2\alpha}{\phi_t}^2-2e^{-2\beta} {\gamma_r}^2-e^{-2\beta}{\phi_r}^2-\mathbf{f}).
\end{eqnarray}
which is also given by
\begin{eqnarray} \label{3.13}
\nonumber
\nabla_{\nu} P_R^{\nu}&=& \frac{1}{\sqrt{-g}}\partial_{\nu} (\sqrt{-g}P_R^{\nu})\\
&=&\frac{1}{re^{\beta+\alpha}}(-\partial_t (re^{\beta}\mathbf{m})+\partial_r (re^{\alpha}(\mathbf{e}-\mathbf{f}))).
\end{eqnarray}

Similarly for the choice $\mathcal{R}_1=r \partial_r$, we have
\begin{equation*}
P_{\mathcal{R}_1}=-e^{\beta-\alpha}r \mathbf{m} \partial_t+r (\mathbf{e}-\mathbf{f}) \partial_r.
\end{equation*}
Using the Einstein equations \eqref{2.10} and \eqref{2.12}, we have
\begin{equation} \label{3.14}
\nabla_{\nu} P_{\mathcal{R}_1}^{\nu}= 2e^{-2\alpha} {\gamma_t}^2+e^{-2\alpha}{\phi_t}^2-m^2 e^{-2\gamma} \phi^2.
\end{equation}

Now let $J^-(O)$ be the causal past of the point $O$ and $I^-(O)$ the chronological past of $O$. Compared to the flat case, we give the following definitions
\begin{displaymath}
\Sigma_t^O:=\Sigma_t \cap J^-(O),
\end{displaymath}
\begin{displaymath}
K(t):=\cup_{t_0 \leq t \leq t' <0} \Sigma_{t'} \cap J^-(O),
\end{displaymath}
\begin{displaymath}
C(t):=\cup_{t_0 \leq t \leq t' < 0} \Sigma_{t'} \cap \left(J^-(O) \setminus I^-(O) \right),
\end{displaymath}
\begin{displaymath}
K(t,s):=\cup_{t_0 \leq t \leq t' < s} \Sigma_{t'} \cap J^-(O),
\end{displaymath}
\begin{displaymath}
C(t,s):=\cup_{t_0 \leq t \leq t' < s} \Sigma_{t'} \cap \left(J^-(O) \setminus I^-(O) \right)
\end{displaymath}
for $t_0 \leq t<s<0$ with $t_0$ the initial time. In the following we will try to understand the behaviour of various quantities of the system as one approaches $O$ in a limiting sense. For this purpose we will use Stokes' theorem in the region $K(\tau,s)$, $t_0 \leq \tau \leq s<0$.

The volume 3-form of $(M,g)$ is given by
\begin{displaymath}
{\bar{\mu}}_g=r e^{\beta+\alpha} dt \wedge dr \wedge d \theta
\end{displaymath}
and the area 2-form of $(\Sigma,q)$ by
\begin{displaymath}
{\bar{\mu}}_q=r e^{\beta} dr \wedge d \theta.
\end{displaymath}
Let us define 1-forms $\tilde{l},\tilde{n}$ and $\tilde{m}$ as follows
\begin{displaymath}
\tilde{l}:=-e^{\alpha}dt+e^{\beta}dr,
\end{displaymath}
\begin{displaymath}
\tilde{n}:=-e^{\alpha}dt-e^{\beta}dr,
\end{displaymath}
\begin{displaymath}
\tilde{m}:=r d \theta,
\end{displaymath}
therefore,
\begin{displaymath}
{\bar{\mu}}_g=\frac{1}{2} \left( \tilde{l} \wedge \tilde{n} \wedge \tilde{m} \right).
\end{displaymath}
Then we introduce the 2-forms ${\bar{\mu}}_{\tilde{l}}$ and ${\bar{\mu}}_{\tilde{n}}$ such that
\begin{displaymath}
{\bar{\mu}}_{\tilde{l}}:=-\frac{1}{2} \tilde{n} \wedge \tilde{m},
\end{displaymath}
\begin{displaymath}
{\bar{\mu}}_{\tilde{n}}:=\frac{1}{2} \tilde{l} \wedge \tilde{m},
\end{displaymath}
so we have
\begin{displaymath}
{\bar{\mu}}_g=-\tilde{l} \wedge {\bar{\mu}}_{\tilde{l}},
\end{displaymath}
\begin{displaymath}
{\bar{\mu}}_g=-\tilde{n} \wedge {\bar{\mu}}_{\tilde{n}}.
\end{displaymath}
Now, let us apply the Stokes' theorem for the ${\bar{\mu}}_g$-divergence of $P_X$ in the region $K(\tau,s)$. We have
\begin{equation} \label{3.16}
\int_{K(\tau,s)} \nabla_{\nu} P_X^{\nu} {\bar{\mu}}_g= \int_{\Sigma_s^O} e^{\alpha} P_X^t {\bar{\mu}}_q-\int_{\Sigma_{\tau}^O} e^{\alpha}P_X^t {\bar{\mu}}_q+Flux(P_X)(\tau,s)
\end{equation}
where
\begin{displaymath}
Flux(P_X)(\tau,s)=-\int_{C(\tau,s)} \tilde{n}(P_X) {\bar{\mu}}_{\tilde{n}}.
\end{displaymath}

On the other hand, in null coordinate system, see \cite{chen} section 4, we have
\begin{displaymath}
d v=-e^{-\mathcal{F}} \tilde{n},\, du=-e^{-\mathcal{G}} \tilde{l}.
\end{displaymath}
And the volume 3-form of $(M,g)$ takes the form
\begin{displaymath}
{\bar{\mu}}_g=\frac{1}{2} r e^{2\lambda} du \wedge dv \wedge d \theta.
\end{displaymath}
Next, we introduce the 2-forms ${\bar{\mu}}_v$ and ${\bar{\mu}}_u$ as follows
\begin{displaymath}
{\bar{\mu}}_g=dv \wedge {\bar{\mu}}_v,\, {\bar{\mu}}_g=du \wedge {\bar{\mu}}_u.
\end{displaymath}
From the above two formulas, we infer
\begin{displaymath}
{\bar{\mu}}_v=-\frac{1}{2} r e^{2\lambda} ( du \wedge d \theta),\, {\bar{\mu}}_u=\frac{1}{2}r e^{2\lambda} (dv \wedge d \theta).
\end{displaymath}
Therefore,
\begin{displaymath}
Flux(P_X)(\tau,s)=\int_{C(\tau,s)} dv(P_X) {\bar{\mu}}_v,
\end{displaymath}
in particular,
\begin{eqnarray*}
Flux(P_T)(\tau,s)&=&\int_{C(\tau,s)} dv(P_T) {\bar{\mu}}_v, \\
&=&-\int_{C(\tau,s)} e^{-\mathcal{F}} (\mathbf{e}-\mathbf{m}) {\bar{\mu}}_v.
\end{eqnarray*}

\subsection{Monotonicity of Energy}
As was mentioned in \cite{chen}, we have shown that the monotonicity of energy holds.
\begin{proposition} \label{prop3.3}
We have $E^O(\tau) \geq E^O(s)$ for $t_0 \leq \tau<s<0$.
\end{proposition}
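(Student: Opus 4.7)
The plan is to apply the divergence theorem to the momentum $P_T$ on the slab $K(\tau, s)$, whose boundary consists of the spacelike slices $\Sigma_\tau^O$ and $\Sigma_s^O$ together with the null piece $C(\tau, s)$ on the past light cone of $O$. Since $\nabla_\nu P_T^\nu = 0$ by \eqref{3.10} and $e^\alpha P_T^t = -\mathbf{e}$, the general identity \eqref{3.16} collapses to
\[
E^O(\tau) - E^O(s) = -Flux(P_T)(\tau, s),
\]
so the entire proposition reduces to showing that the flux through the null boundary is non-positive.

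The second step is to substitute the null-coordinate expression derived at the end of Section 3.1,
\[
Flux(P_T)(\tau, s) = -\int_{C(\tau, s)} e^{-\mathcal{F}}(\mathbf{e} - \mathbf{m})\,\bar{\mu}_v.
\]
The factor $e^{-\mathcal{F}}$ is strictly positive, and with the orientation conventions fixed in Section 3.1 the 2-form $\bar{\mu}_v$ pulls back to a positive volume element on the past null cone of $O$, so the sign of $Flux(P_T)(\tau, s)$ is determined entirely by the pointwise sign of $\mathbf{e} - \mathbf{m}$.

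The final step is the pointwise inequality $\mathbf{e} - \mathbf{m} \geq 0$, which I would obtain by completing squares in the explicit component formulas from Section 2.1:
\[
\mathbf{e} - \mathbf{m} = (e^{-\alpha}\gamma_t - e^{-\beta}\gamma_r)^2 + \tfrac{1}{2}(e^{-\alpha}\phi_t - e^{-\beta}\phi_r)^2 + \tfrac{m^2}{2}e^{-2\gamma}\phi^2 \geq 0.
\]
This is the dominant-energy-type inequality for the timelike vector $T = e^{-\alpha}\partial_t$ paired against a future-directed null vector; the crucial structural point is that the Klein-Gordon potential contributes to $T_{00}$ (hence to $\mathbf{e}$) with a definite sign but is absent from $T_{01}$ (hence from $\mathbf{m}$), so the mass term only helps the inequality. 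No serious obstacle is anticipated; the only step really worth a careful look is the orientation bookkeeping for $\bar{\mu}_v$ and $\tilde{n}$ in the Stokes calculation, which must be kept compatible with the conventions $dv = -e^{-\mathcal{F}}\tilde{n}$ and $\bar{\mu}_v = -\tfrac{1}{2}r e^{2\lambda}\, du\wedge d\theta$ already established.
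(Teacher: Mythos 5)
Your argument is correct and is precisely the proof the paper intends: it assembles the tools already laid out in Section 3.1 (the divergence-free momentum $P_T$ from \eqref{3.10}, the Stokes identity \eqref{3.16} with $e^{\alpha}P_T^t=-\mathbf{e}$, and the null-flux expression for $Flux(P_T)$), reducing monotonicity to the pointwise inequality $\mathbf{e}-\mathbf{m}=(e^{-\alpha}\gamma_t-e^{-\beta}\gamma_r)^2+\tfrac{1}{2}(e^{-\alpha}\phi_t-e^{-\beta}\phi_r)^2+\tfrac{m^2}{2}e^{-2\gamma}\phi^2\geq 0$, which checks out against the component formulas of Section 2.1. The paper itself defers the proof to the companion paper \cite{chen}, but your route is the standard dominant-energy argument that setup is designed for, and your completed-square identity and sign bookkeeping are consistent with the conventions and with Proposition \ref{prop3.4}.
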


Now, we define
\begin{equation} \label{3.17}
E^O_{conc}:=\inf\limits_{\tau \in [t_0,0)} E^O(\tau).
\end{equation}
By Proposition \ref{prop3.3}, \eqref{3.17} is equivalent to
\begin{equation} \label{3.18}
E^O_{conc}:=\lim\limits_{\tau \to 0} E^O(\tau).
\end{equation}
We say that the energy of the Cauchy problem concentrates if $E^O_{conc} \neq 0$ and does not concentrate if $E^O_{conc}=0$. By the monotonicity above, we immediately have the following proposition
\begin{proposition} \label{prop3.4}
For the vector field $T$, let
\begin{displaymath}
Flux(P_T)(\tau):=\lim\limits_{s \to 0} Flux(P_T)(\tau,s).
\end{displaymath}
Then, we have $Flux(P_T)(\tau) \to 0$ as $\tau \to 0$.
\end{proposition}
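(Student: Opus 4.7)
The plan is to read off the flux of $P_T$ through the null cone as the \emph{difference} of the energies on the two spacelike slices and then exploit the monotonicity already established in Proposition \ref{prop3.3}. Because $P_T$ is divergence-free (see \eqref{3.10}), Stokes' theorem \eqref{3.16} applied on $K(\tau,s)$ with $t_0 \le \tau < s < 0$ yields
\begin{equation*}
0 \;=\; \int_{K(\tau,s)} \nabla_\nu P_T^{\nu}\,\bar\mu_g \;=\; \int_{\Sigma_s^O} e^{\alpha} P_T^t\,\bar\mu_q - \int_{\Sigma_\tau^O} e^{\alpha} P_T^t\,\bar\mu_q + \mathrm{Flux}(P_T)(\tau,s).
\end{equation*}
Using \eqref{3.9}, $e^{\alpha} P_T^t = -\mathbf{e}$, so the first two terms are precisely $-E^O(s)$ and $-E^O(\tau)$ respectively. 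This gives the clean identity $\mathrm{Flux}(P_T)(\tau,s) = E^O(\tau) - E^O(s)$.

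Next I would pass to the limit $s \to 0^-$. By Proposition \ref{prop3.3} the function $s \mapsto E^O(s)$ is monotone non-increasing as $s \nearrow 0$, bounded below by $0$, so $\lim_{s\to 0} E^O(s) = E^O_{\mathrm{conc}}$ by \eqref{3.18}. Hence the flux through the truncated cone has a well-defined limit, and
\begin{equation*}
\mathrm{Flux}(P_T)(\tau) \;=\; \lim_{s\to 0}\mathrm{Flux}(P_T)(\tau,s) \;=\; E^O(\tau) - E^O_{\mathrm{conc}}.
\end{equation*}
Letting $\tau \to 0^-$ and applying \eqref{3.18} once more yields $\mathrm{Flux}(P_T)(\tau) \to 0$, which is the claim.

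There is no real obstacle; the argument is essentially bookkeeping with the divergence theorem, together with the sign convention in $\bar\mu_{\tilde n}$ (which is the only subtle point to verify, to ensure that the boundary terms carry the correct sign so that $\mathrm{Flux}(P_T)(\tau,s)$ indeed equals $E^O(\tau) - E^O(s)$ and not $-$ of it). The monotonicity of $E^O$ guarantees that the limit as $s \to 0$ exists and is finite for every fixed $\tau$, so no additional regularity argument is required at the singular point $O$.
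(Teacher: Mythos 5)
Your argument is correct and is exactly the reasoning the paper intends: since $\nabla_\nu P_T^\nu=0$, Stokes' theorem \eqref{3.16} identifies $Flux(P_T)(\tau,s)$ with the energy difference of the two slices, and the monotone convergence of $E^O$ to $E^O_{conc}$ from \eqref{3.18} then forces the flux to vanish as $\tau\to 0$ (the paper simply states this "immediately" follows from Proposition \ref{prop3.3}). The only quibble is the sign: with $e^{\alpha}P_T^t=-\mathbf{e}$ the identity reads $Flux(P_T)(\tau,s)=E^O(s)-E^O(\tau)$ rather than $E^O(\tau)-E^O(s)$, but this does not affect the conclusion.
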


\subsection{Non-concentration of integrated potential energy}
In the following proposition, we shall prove that the potential energy does not concentrate.
\begin{proposition} \label{prop3.5}
The following potential energy does not concentrate, i.e.
\begin{equation} \label{3.19}
\int_{\Sigma_{\tau}\cap J^-(O)} \mathbf{f}  \bar{\mu}_q \to 0 \quad as \quad \tau \to 0.
\end{equation}
\end{proposition}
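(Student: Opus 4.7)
The strategy is to combine a Morawetz-type argument using the scaling vector field $\mathcal{R}_1=r\partial_r$ with a Sobolev--Hölder estimate on the shrinking spatial disk $\Sigma_\tau^O$. First, I would apply Stokes' theorem \eqref{3.16} on $K(\tau,s)$ to the momentum $P_{\mathcal{R}_1}$, combined with the divergence identity \eqref{3.14}, namely $\nabla_\nu P_{\mathcal{R}_1}^\nu=2e^{-2\alpha}\gamma_t^2+e^{-2\alpha}\phi_t^2-\mathbf{f}$. Because $P_{\mathcal{R}_1}$ carries a prefactor of $r$, the spatial boundary integrand on $\Sigma_t^O$ equals $-r^2 e^{2\beta}\mathbf{m}\,dr\wedge d\theta$; combining the pointwise inequality $|\mathbf{m}|\leq\mathbf{e}$ with the uniform boundedness of $\alpha,\beta$ and with $r_t\to 0$ (where $r_t$ denotes the radius of $\Sigma_t^O$, which vanishes by finite propagation) gives $|\int_{\Sigma_t^O}e^\alpha P_{\mathcal{R}_1}^t\,\bar{\mu}_q|\leq Cr_t E^O(t)\to 0$. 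The flux $\mathrm{Flux}(P_{\mathcal{R}_1})(\tau,s)$ inherits the same $r$-prefactor and is controlled by $C r_\tau\,\mathrm{Flux}(P_T)(\tau,s)$, which vanishes by Proposition~\ref{prop3.4}. Since $\bar{\mu}_g=e^\alpha\,dt\wedge\bar{\mu}_q$, the spacetime kinetic term on the right-hand side is bounded by $C(s-\tau)\sup_t E^O(t)\to 0$. Hence $\int_{K(\tau,s)}\mathbf{f}\,\bar{\mu}_g\to 0$ as $\tau,s\to 0^-$.

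To turn this into the desired spatial statement, I would use the lower bound $\gamma\geq-C$ from \eqref{1.19}, which implies $\mathbf{f}\leq m^2 e^{2C}\phi^2$ pointwise; the task then reduces to showing $\int_{\Sigma_\tau^O}\phi^2\,\bar{\mu}_q\to 0$. The energy supplies a uniform $\dot H^1$ control on $\phi(\tau,\cdot)$ regarded as a radial function on $\mathbb R^2$, because the uniform boundedness of $\beta$ makes the spatial metric $e^{2\beta}dr^2+r^2\,d\theta^2$ uniformly equivalent to the flat one. Coupled with a uniform $L^2$ bound on $\phi$ over a fixed reference disk $B_{R_0}$, the two-dimensional Sobolev embedding $H^1\hookrightarrow L^p$ for every $p<\infty$ yields a uniform $L^p$ bound on $\phi(\tau,\cdot)$; then Hölder's inequality on the shrinking region with area $|\Sigma_\tau^O|\lesssim r_\tau^2$ gives, for any $p>2$,
\[
F(\tau)\;\leq\;C\,\|\phi(\tau,\cdot)\|_{L^p(B_{R_0})}^2\,|\Sigma_\tau^O|^{1-2/p}\;\lesssim\;r_\tau^{2(1-2/p)}\;\longrightarrow\;0.
\]

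The main obstacle I anticipate is securing the uniform $L^2$ or uniform mean-value control of $\phi$ on the fixed reference disk: the energy only bounds the weighted quantity $\int e^{-2\gamma}\phi^2\,\bar{\mu}_q$, and absorbing the weight requires either an a priori upper bound on $\gamma$---only the lower bound is supplied by \eqref{1.19}---or a Poincar\'e-type inequality on $\Sigma_\tau^O$ that uses the vanishing disk area to cancel the unknown mean, assisted by the spacetime non-concentration of $\mathbf{f}$ established in the first step. Once this uniform control is in place the Sobolev--Hölder argument closes the proof cleanly.
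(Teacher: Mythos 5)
Your second step is, in substance, the paper's actual argument: use the lower bound on $\gamma$ to reduce the claim to $\int_{\Sigma_\tau^O}\phi^2\,\bar{\mu}_q\to 0$, then apply H\"{o}lder on the shrinking disk together with a uniform $L^p$ bound on $\phi(\tau,\cdot)$ obtained from the two-dimensional Sobolev embedding (the paper takes $p=4$ and concludes $\int_{\Sigma_\tau\cap J^-(O)}\mathbf{f}\,\bar{\mu}_q\lesssim r_2(\tau)$). But you leave unproved exactly the ingredient everything hinges on: the uniform $L^2$ bound on $\phi(\tau,\cdot)$ over the slice. That is a genuine gap, and the two routes you propose for closing it (an a priori upper bound on $\gamma$, or a Poincar\'{e}-type inequality assisted by the spacetime non-concentration of $\mathbf{f}$) are not the ones that work. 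The $L^2$ bound does not come from the potential part of the energy at all; it comes from the kinetic part. Writing $U(\tau)=U(t_0)+\int_{t_0}^{\tau}U_t\,dt$ and applying Minkowski's integral inequality,
\[
\|U(\tau)\|_{L^2}\lesssim \|U(t_0)\|_{L^2}+\int_{t_0}^{\tau}\|U_t\|_{L^2}\,dt\lesssim C,
\]
since $\|U_t\|_{L^2}^2$ is controlled by the conserved energy (the measures $r\,dr$ and $\bar{\mu}_q$ are uniformly equivalent because $\beta$ is bounded) and the time interval is finite. This is precisely estimate \eqref{3.20} in the paper; with it, $H^1\hookrightarrow L^4$ gives the uniform $L^4$ bound and the H\"{o}lder step closes the proof as you describe. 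No upper bound on $\gamma$ and no Poincar\'{e} inequality is needed anywhere.

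Your first step---the Morawetz-type identity for $P_{\mathcal{R}_1}$ yielding $\int_{K(\tau,s)}\mathbf{f}\,\bar{\mu}_g\to 0$---is not needed for this proposition and cannot substitute for the missing slice estimate: a vanishing spacetime integral over a region of shrinking temporal extent gives no control of the integral on an individual slice $\Sigma_\tau^O$ without further argument. Spacetime estimates of this flavour do appear in the paper, but only later (Propositions \ref{prop3.7} and \ref{prop3.8}), and they are derived after, and partly from, the present proposition.
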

\begin{proof}
By H\"{o}lder's inequality and the fact that $\gamma$ has a lower bound, we have
\begin{eqnarray*}
\int_{\Sigma_{\tau}\cap J^-(O)} \mathbf{f}  \bar{\mu}_q
&\lesssim& {\left(\int_{\Sigma_{\tau}} \phi^4  \bar{\mu}_q\right)}^{\frac{1}{2}} {  r_2(\tau)}.
\end{eqnarray*}
where $r=r_2(\tau)$ is the radius where the $t=\tau$ slice intersects the $R=|T|$ curve, i.e the mantel of the null cone $J^-(O)$.

For $U=(\gamma,\phi)$, using Sobolev inequality, we have
\begin{eqnarray*}
{(\int_{\Sigma_{\tau}}U^4 \bar{\mu}_q)}^{\frac{1}{4}} &\lesssim& {(\int_{\Sigma_{\tau}} U^2  rdr)}^{\frac{1}{2}}+{(\int_{\Sigma_{\tau}} {U_r}^2  rdr)}^{\frac{1}{2}}\\
&\lesssim& {(\int_{\Sigma_{\tau}} U^2  rdr)}^{\frac{1}{2}}+E_0\\
&\lesssim& {\| U(\tau)  \|}_{L^2}+E_0.
\end{eqnarray*}
By triangular inequality and Minkowski inequality, we can get
\begin{eqnarray} \label{3.20}
{\| U(\tau)  \|}_{L^2} &\lesssim& {\| U(t_0)  \|}_{L^2}+{\|\int_{t_0}^{\tau} U_t dt\|}_{L^2}\\ \nonumber
&\lesssim& C+\int_{t_0}^{\tau} {\|U_t \|}_{L^2} dt\\ \nonumber
&\lesssim& C.
\end{eqnarray}
where $t_0$ denotes the initial time.

Thus,
\begin{equation} \label{3.21}
{(\int_{\Sigma_{\tau}} U^4  \bar{\mu}_q)}^{\frac{1}{4}} \lesssim C.
\end{equation}
Then, by \eqref{3.21}, we imply
\begin{equation*}
\int_{\Sigma_{\tau}\cap J^-(O)} \mathbf{f}  \bar{\mu}_q \lesssim  {r_2(\tau)}.
\end{equation*}
with $r_2(\tau) \to 0$ as $\tau \to 0$. This concludes the proof of the proposition.
\end{proof}

\subsection{Non-concentration away from the axis}
In this section, we shall prove that energy does not concentrate away from the axis using the divergence free vector $P_T$.
\begin{proposition} \label{prop3.6}
The following energy on an annular slice away from the axis does not concentrate,
\begin{equation*}
E_{ext}^O (\tau):=\int_{B_{r_2(\tau)} \setminus B_{r_1(\tau)}} \mathbf{e} \bar{\mu}_q \to 0 \quad as \quad \tau \to 0,
\end{equation*}
where $r=r_2(\tau)$ is the radius where the $t=\tau$ slice intersects the $R=|T|$ curve i.e. the mantel of the null cone $J^{-}(O)$ and $r=r_1(\tau)$ is the radius where the $t=\tau$ slice intersects the $R=\lambda|T|$ curve, for any real $\lambda \in (0,1)$. Observe that both $r_1(\tau)$ and $r_2(\tau) \to 0$ as $\tau \to 0$.
\end{proposition}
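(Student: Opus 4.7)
The plan is to apply Stokes' theorem to the divergence-free current $P_T$ (cf.\ \eqref{3.10}) on a spacetime annular cone bounded by the outer null mantel of $J^{-}(O)$ and the inner timelike hypersurface $\{R=\lambda|T|\}$, and to combine with the earlier results of Sections 3.1--3.3 to extract $E_{ext}^O(\tau)\to 0$. Concretely, for $\tau<s<0$, introduce
\begin{equation*}
K^{ext}(\tau,s):=\bigcup_{\tau\le t'\le s}\Sigma_{t'}\cap\{r_1(t')\le r\le r_2(t')\},
\end{equation*}
whose boundary consists of the bottom annulus $A_\tau$, the top annulus $A_s$, the outer null segment $C^{out}(\tau,s)\subset\{R=|T|\}$, and the inner timelike segment $L(\tau,s)\subset\{R=\lambda|T|\}$. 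Stokes' theorem, with the orientation conventions of Section 3.1, then yields
\begin{equation*}
E_{ext}^O(\tau)-E_{ext}^O(s)=Flux_{C^{out}}(P_T)(\tau,s)+Flux_L(P_T)(\tau,s).
\end{equation*}

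The outer contribution is handled immediately: $|Flux_{C^{out}}(P_T)(\tau,s)|\le|Flux(P_T)(\tau)|\to 0$ as $\tau\to 0^-$ by Proposition \ref{prop3.4}. The main technical step is to control the inner flux $Flux_L(P_T)(\tau,s)$. Parameterizing $L$ by $(t,\theta)$ with $r=\lambda|t|$, computing the outward conormal to $L$ in $(t,r)$ coordinates, and using the explicit formula \eqref{3.9} for $P_T$, the dominant energy condition $|\mathbf{m}|\le\mathbf{e}$, together with the uniform bounds on $\alpha,\beta$ from \cite{chen}, I expect a pointwise bound of the shape
\begin{equation*}
|Flux_L(P_T)(\tau,s)|\le C\int_\tau^s\int_0^{2\pi}\mathbf{e}(t,\lambda|t|)\,\lambda|t|\,e^{\beta+\alpha}\,d\theta\,dt.
\end{equation*}
To show the right-hand side vanishes in the limit $\tau,s\to 0^-$, I would combine three ingredients: (i) Proposition \ref{prop3.5}, controlling the potential part of $\mathbf{e}$; (ii) a Morawetz-type spacetime estimate obtained by Stokes' theorem applied to $P_{\mathcal{R}_1}$ on $K^{ext}$ using \eqref{3.14}, providing spacetime $L^2$-control of the kinetic densities $e^{-\alpha}\gamma_t$ and $e^{-\alpha}\phi_t$; and (iii) the regularity of the solution on $L\setminus\{O\}$, since the first possible singularity lies on the axis while $L$ meets the axis only at the apex $O$.

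Given these flux bounds, the Stokes identity shows $E_{ext}^O(s)$ is Cauchy as $s\to 0^-$ and so has a limit; passing to this limit in the identity and letting $\tau\to 0^-$ concludes $E_{ext}^O(\tau)\to 0$. The main obstacle is the inner flux control: since $L$ is strictly timelike ($\lambda<1$ is strictly smaller than the null slope), the flux $Flux_L$ lacks any definite sign and is not handed to us by a direct monotonicity. Overcoming this follows the strategy of Andersson--Gudapati--Szeftel \cite{Andersson}, combining the several conservation laws of Section 3.1 with the non-concentration of the potential energy, and crucially exploiting that the first singularity is strictly localized on the axis so that the solution remains regular on the entire annular region up to the single apex point $O$.
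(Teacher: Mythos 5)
There is a genuine gap, and it sits exactly where you locate ``the main obstacle'': the control of the inner flux. Your region $K^{ext}$ takes the timelike hypersurface $\{R=\lambda|T|\}$ as its inner boundary, so the flux of $P_T$ through it has no sign and must be estimated by hand; none of your three ingredients closes this. Regularity of the solution on $L\setminus\{O\}$ gives no quantitative smallness of the energy density integrated along $L$ up to the apex. The Morawetz-type estimate from $P_{\mathcal{R}_1}$ via \eqref{3.14} is, in the paper's logic, Proposition \ref{prop3.7} (non-concentration of integrated kinetic energy), and that proposition is proved \emph{using} Proposition \ref{prop3.6}; invoking it here is circular. Moreover, applying Stokes to $P_{\mathcal{R}_1}$ on $K^{ext}$ itself generates an uncontrolled flux through the same timelike boundary, so it does not break the circle.

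The paper avoids the signed-flux problem entirely by a different choice of region: a triangular tube $\mathcal{S}$ whose inner boundary $\partial\mathcal{S}_1$ is the \emph{outgoing null} surface issuing from $(\tau,r_1(\tau))$, with $\partial\mathcal{S}_2$ a piece of the mantel and $\partial\mathcal{S}_3$ the annulus. Since $P_T$ is divergence free, \eqref{3.22} expresses $E^O_{ext}(\tau)$ as the sum of two null fluxes, $\int_{\partial\mathcal{S}_1}e^{-\mathcal{G}}(\mathbf{e}+\mathbf{m})\bar{\mu}_u$ and $\int_{\partial\mathcal{S}_2}e^{-\mathcal{F}}(\mathbf{e}-\mathbf{m})\bar{\mu}_v$, both of definite sign. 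The mantel flux vanishes by Proposition \ref{prop3.4}, as you correctly observe for your outer boundary. The real content of the proof is then the decay of the outgoing null flux, which is obtained not from a Morawetz estimate but from a Gr\"{o}nwall argument along null directions for $\hat{\mathcal{F}}^2=e^{2\beta}r(\mathbf{e}-\mathbf{m})$ and $\hat{\mathcal{G}}^2=e^{2\beta}r(\mathbf{e}+\mathbf{m})$: the transport identities \eqref{3.26}--\eqref{3.27} together with \eqref{3.28}, the pointwise bound $r|U|^2\le C$ of \eqref{3.29}, and Proposition \ref{prop3.5} yield $\mathcal{L}_0^2\lesssim\hat{\mathcal{F}}^2\hat{\mathcal{G}}^2/r^2$, which closes the Gr\"{o}nwall estimate as in \cite{Andersson}. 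Your proposal cites the right source but does not reproduce its mechanism; without switching to the null inner boundary and carrying out the $\hat{\mathcal{F}},\hat{\mathcal{G}}$ analysis (or supplying an independent, non-circular bound on the timelike flux), the argument does not go through.
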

\begin{proof}
Consider a tubular region $\mathcal{S}$ with triangular cross section as in \cite{Andersson} in $R>\lambda |T|, \lambda \in (0,1)$ of the spacetime, i.e. the "exterior" part of the interior of the past null cone of $O$. With the use of the divergence-free vector field $P_T$ and Stokes' theorem in a triangular region $\mathcal{S}$ with three boundary segments $\partial \mathcal{S}_1,\partial \mathcal{S}_2$ and $\partial \mathcal{S}_3$, we can estimate the "exterior" energy by calculating the fluxes instead. Here, $\partial \mathcal{S}_3=\overline{B_{r_2(\tau)} \setminus B_{r_1(\tau)}}$, $\partial \mathcal{S}_2$ is a section of the mantel of the null cone $J^{-}(O)$ , $\partial \mathcal{S}_1$ is the outgoing null surface issuing from $(\tau,r_1(\tau))$ and intersects with $R=|T|$. Thus, we obtain
\begin{eqnarray} \label{3.22}
0 &=& \int_{\partial \mathcal{S}_1} du(P_T) \bar{\mu}_u+ \int_{\partial \mathcal{S}_2} dv(P_T) \bar{\mu}_v-\int_{\partial \mathcal{S}_3} e^{\alpha}P^t_T \bar{\mu}_q \\ \nonumber
&=&-\int_{\partial \mathcal{S}_1} e^{-\mathcal{G}}(\mathbf{e}+\mathbf{m}) \bar{\mu}_u-\int_{\partial \mathcal{S}_2} e^{-\mathcal{F}}(\mathbf{e}-\mathbf{m}) \bar{\mu}_v+\int_{\partial \mathcal{S}_3} \mathbf{e} \bar{\mu}_q.
\end{eqnarray}
The first flux term $\int_{\partial \mathcal{S}_2}$ tends to $0$ when approaching $O$ according to Proposition \ref{prop3.4}. To analyze the behaviour of another flux term $\int_{\partial \mathcal{S}_1}$ in \eqref{3.22} close to $O$, we give the following quantities as in \cite{Andersson},
\begin{displaymath}
\hat{l}:=e^{\beta+\alpha}\tilde{l}=e^{\beta}\partial_t+e^{\alpha}\partial_r,
\end{displaymath}
\begin{displaymath}
\hat{n}:=e^{\beta+\alpha}\tilde{n}=e^{\beta}\partial_t-e^{\alpha}\partial_r,
\end{displaymath}
\begin{displaymath}
\mathcal{F}^2:=r(\mathbf{e}-\mathbf{m}),
\end{displaymath}
\begin{displaymath}
\mathcal{G}^2:=r(\mathbf{e}+\mathbf{m}).
\end{displaymath}
From \eqref{3.12} and \eqref{3.13}, we have
\begin{eqnarray} \label{3.23}
&&\frac{1}{re^{\beta+\alpha}}(-\partial_t (re^{\beta}\mathbf{m})+\partial_r (re^{\alpha}(\mathbf{e}-\mathbf{f}))) \\ \nonumber
&&=-e^{-\beta} \alpha_r \mathbf{e}+e^{-\alpha}\beta_t \mathbf{m}+\frac{1}{2r}e^{-\beta}(2e^{-2\alpha} {\gamma_t}^2+e^{-2\alpha}{\phi_t}^2-2e^{-2\beta} {\gamma_r}^2-e^{-2\beta}{\phi_r}^2-\mathbf{f}).
\end{eqnarray}
Then, using \eqref{3.4} and \eqref{3.23}, we can get
\begin{equation} \label{3.24}
\partial_t (r e^{\beta} \mathbf{e})-\partial_r(re^{\alpha} \mathbf{m})=0,
\end{equation}
\begin{equation} \label{3.25}
\partial_t (r e^{\beta} \mathbf{m})-\partial_r(re^{\alpha} \mathbf{e})=\mathcal{L},
\end{equation}
where
\begin{displaymath}
\mathcal{L}:=\frac{re^{\alpha} \alpha_r}{2} \left(2{(T\gamma)}^2+{(T\phi)}^2+2{(R\gamma)}^2+{(R\phi)}^2-\mathbf{f}\right)+e^{\alpha}{\mathcal{L}}_0-r\beta_te^{\beta}\mathbf{m}
\end{displaymath}
for
\begin{eqnarray*}
{\mathcal{L}}_0&\triangleq&\frac{1}{2} \left(-2{(T\gamma)}^2-{(T\phi)}^2+2{(R\gamma)}^2+{(R\phi)}^2+\mathbf{f}\right)-r\partial_r \mathbf{f}-\mathbf{f} \\
&=&\frac{1}{2} \left(-2{(T\gamma)}^2-{(T\phi)}^2+2{(R\gamma)}^2+{(R\phi)}^2\right)-2m^2e^{-2\gamma}r\left(\phi\phi_r-\phi^2 \gamma_r\right)-\frac{1}{2}m^2 e^{-2\gamma} \phi^2.
\end{eqnarray*}
Furthermore, by adding and subtracting the identities \eqref{3.24} and \eqref{3.25}, we can get
\begin{equation} \label{3.26}
\partial_\nu (re^{\beta+\alpha}(\mathbf{e}-\mathbf{m})\tilde{l}^\nu)=\partial_\nu (\mathcal{F}^2\hat{l}^\nu)=-\mathcal{L},
\end{equation}
\begin{equation} \label{3.27}
\partial_\nu (re^{\beta+\alpha}(\mathbf{e}+\mathbf{m})\tilde{n}^\nu)=\partial_\nu (\mathcal{G}^2\hat{n}^\nu)=\mathcal{L}.
\end{equation}
As in \cite{Andersson}, we express $\mathcal{L}$ in terms of $\mathcal{F}^2\mathcal{G}^2$ by using the Einstein equations,
\begin{equation} \label{3.28}
\mathcal{L}=e^\alpha {\mathcal{L}}_0+e^{2\beta+\alpha}\left(\mathcal{F}^2\mathcal{G}^2-2r^2 \mathbf{e}\mathbf{f}+r^2\mathbf{f}^2 \right).
\end{equation}
In what follows we use Einstein equations to estimate the nonlinear terms involving $\mathbf{e}$ and $\mathbf{f}$ in the quantity $\mathcal{L}$, for the purpose of setting up a Gr\"{o}nwall estimate for $\mathcal{F}$ and $\mathcal{G}$ using the identities in \eqref{3.26}\eqref{3.27}.

Firstly, as shown in \cite{Andersson}, we have that
\begin{displaymath}
\hat{l}^{\mu} \partial_{\mu} e^{2\beta}=2e^{2\beta} e^{2\beta+\alpha} \mathcal{G}^2, \quad \hat{n}^{\mu} \partial_{\mu} e^{2\beta}=-2e^{2\beta} e^{2\beta+\alpha} \mathcal{F}^2,
\end{displaymath}
and
\begin{displaymath}
\partial_{\mu} \hat{l}^{\mu}=e^{2\beta+\alpha}(\mathcal{G}^2-r\mathbf{f}), \quad \partial_{\mu} \hat{n}^{\mu}=e^{2\beta+\alpha}(-\mathcal{F}^2+r\mathbf{f}).
\end{displaymath}
These imply
\begin{displaymath}
\hat{l}^{\mu} \partial_{\mu} (e^{2\beta} \mathcal{F}^2)=e^{2\beta+\alpha}(-{\mathcal{L}}_0+r^2e^{2\beta} S_1),
\end{displaymath}
where
\begin{eqnarray*}
S_1 &=& 3 \mathbf{e}\mathbf{f}-\mathbf{f}^2-\mathbf{m}\mathbf{f} \\
&=&  (\mathbf{e}-\mathbf{m})\mathbf{f}+\mathbf{e_0}\mathbf{f} \\
&\geq& 0,
\end{eqnarray*}
and
\begin{displaymath}
\hat{n}^{\mu} \partial_{\mu} (e^{2\beta} \mathcal{G}^2)=e^{2\beta+\alpha}({\mathcal{L}}_0+r^2e^{2\beta} S_2),
\end{displaymath}
where
\begin{eqnarray*}
S_2 &=& -3 \mathbf{e}\mathbf{f}+\mathbf{f}^2-\mathbf{m}\mathbf{f} \\
&=& - (\mathbf{e}+\mathbf{m})\mathbf{f}-\mathbf{e_0}\mathbf{f} \\
&\leq& 0.
\end{eqnarray*}
Now we define the quantities $\hat{\mathcal{F}}$ and  $\hat{\mathcal{G}}$ in the following way
\begin{displaymath}
\hat{\mathcal{F}}:=e^{\beta}\mathcal{F}, \quad \hat{\mathcal{G}}:=e^{\beta}\mathcal{G}.
\end{displaymath}
In what follows, we give an estimate on ${{\mathcal{L}}_0}^2$ controlled by $\mathbf{e}^2-\mathbf{m}^2$. First, we have that,
\begin{equation*}
{{\mathcal{L}}_0}^2 \lesssim r^2 \mathbf{f}^2 \mathbf{e_0}+r^2 \mathbf{e_0} \mathbf{f}+\mathbf{f}^2+\frac{1}{4} \mathbf{e_0}^2-\mathbf{m}^2
\end{equation*}
We claim that there exists a constant $C>0$ such that
\begin{equation} \label{3.29}
r{|U|}^2 \leq C.
\end{equation}
This is because
\begin{eqnarray*}
r{|U|}^2 &\leq& r \left(-\int_r^{+\infty} \partial_\rho U^2 d \rho \right) \\
&\leq& r \left(-2\int_r^{+\infty} U \times U_\rho d \rho \right) \\
&\lesssim&  \left(\int_r^{+\infty} \rho |U| \times |U_\rho| d \rho \right) \\
&\lesssim&  {\left(\int_r^{+\infty} \rho {|U|}^2 d \rho \right)}^{\frac{1}{2}}{\left(\int_r^{+\infty} \rho {|U_\rho|}^2 d \rho \right)}^{\frac{1}{2}}\\
&\leq& C.
\end{eqnarray*}
The last of the above inequality holds according to the energy estimates and \eqref{3.20}.

Then, using \eqref{3.29}, we obtain
\begin{eqnarray*}
{{\mathcal{L}}_0}^2 &\lesssim& r \mathbf{f} \mathbf{e_0}+r^2 \mathbf{e_0} \mathbf{f}+\mathbf{f}^2+\frac{1}{4} \mathbf{e_0}^2-\mathbf{m}^2 \\
&\lesssim&\frac{1}{4} \mathbf{e_0}^2+\frac{1}{2} \mathbf{e_0} \mathbf{f}+\frac{1}{4}\mathbf{f}^2-\mathbf{m}^2 \\
&\lesssim&\mathbf{e}^2-\mathbf{m}^2 \\
&\lesssim& \frac{{\hat{\mathcal{F}}}^2{\hat{\mathcal{G}}}^2}{r^2}.
\end{eqnarray*}
The rest of the proof is same to what Andersson did in \cite{Andersson}, so we omit it here.
\end{proof}

\subsection{Local spacetime integral estimates}
Using Proposition \ref{prop3.5} and Proposition \ref{prop3.6}, we first give the non-concentration of integrated kinetic energy, of which the proof is similar to \cite{Andersson}.
\begin{proposition} \label{prop3.7}
For the kinetic energy density defined as follows
\begin{displaymath}
\mathbf{e}_{kin}:=\frac{1}{2}e^{-2\alpha}(2{\gamma_t}^2+{\phi_t}^2),
\end{displaymath}
the spacetime integral of $\mathbf{e}_{kin}$ does not concentrate in the past null cone of $O$, i.e,
\begin{displaymath}
\frac{1}{r_2(\tau)}\int_{K_\tau}\mathbf{e}_{kin} \bar{\mu}_g \to 0,\quad as \quad \tau \to 0
\end{displaymath}
where $r_2(\tau)$ is the radius function appearing in Proposition \ref{prop3.6}.
\end{proposition}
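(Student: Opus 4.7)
My plan is to apply Stokes' theorem in $K_\tau$ to the vector field $\mathcal{R}_1 = r\partial_r$, whose divergence identity \eqref{3.14} reads $\nabla_\nu P^\nu_{\mathcal{R}_1} = 2\mathbf{e}_{kin} - \mathbf{f}$. Applying \eqref{3.16} with $X = \mathcal{R}_1$ and letting $s \to 0$, the inner spacelike boundary term vanishes, because $|e^\alpha P^t_{\mathcal{R}_1}| = e^\beta r |\mathbf{m}|$ together with $|\mathbf{m}| \leq \mathbf{e}$ bounds that contribution by $r_2(s) E^O(s) \to 0$. One therefore obtains
\begin{equation*}
2\int_{K_\tau} \mathbf{e}_{kin}\, \bar{\mu}_g = \int_{K_\tau}\mathbf{f}\, \bar{\mu}_g + \int_{\Sigma_\tau^O} e^\beta r \mathbf{m}\, \bar{\mu}_q + \mathrm{Flux}(P_{\mathcal{R}_1})(\tau).
\end{equation*}
After dividing by $r_2(\tau)$, it suffices to prove that each of the three terms on the right, divided by $r_2(\tau)$, tends to zero as $\tau \to 0$.

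For the first term, Fubini and the uniform bound on $e^\alpha$ give $\int_{K_\tau}\mathbf{f}\,\bar{\mu}_g \lesssim |\tau|\, \sup_{t\in[\tau,0)} \int_{\Sigma_t^O} \mathbf{f}\,\bar{\mu}_q$; since $|\tau|/r_2(\tau)$ is bounded by the null-cone geometry, Proposition \ref{prop3.5} forces this contribution to zero. For the second term, the uniform bounds on $\alpha, \beta$ and $|\mathbf{m}| \leq \mathbf{e}$ give
\begin{equation*}
\frac{1}{r_2(\tau)}\left|\int_{\Sigma_\tau^O} e^\beta r \mathbf{m}\, \bar{\mu}_q\right| \lesssim \int_{\Sigma_\tau^O} \frac{r}{r_2(\tau)}\, \mathbf{e}\, \bar{\mu}_q.
\end{equation*}
Splitting $\Sigma_\tau^O = B_{r_1(\tau)} \cup (B_{r_2(\tau)} \setminus B_{r_1(\tau)})$ with $r_1, r_2$ as in Proposition \ref{prop3.6} and parameter $\lambda \in (0,1)$, the inner contribution is bounded by $(r_1(\tau)/r_2(\tau)) E^O(\tau) \sim \lambda E^O(t_0)$, while the annular contribution is bounded by $E_{ext}^O(\tau)$ and vanishes by Proposition \ref{prop3.6}; sending $\tau \to 0$ first and then $\lambda \to 0$ yields the desired limit.

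For the third term, I analyze the flux of $P_{\mathcal{R}_1}$ on the mantel $C(\tau)$. Since $P_{\mathcal{R}_1} = r\bigl(-e^{\beta-\alpha}\mathbf{m}\,\partial_t + (\mathbf{e}-\mathbf{f})\partial_r\bigr)$ and $r \leq r_2(\tau)$ pointwise on $C(\tau)$, the flux density is dominated by $r_2(\tau)$ times an integrand pointwise comparable to that of $\mathrm{Flux}(P_T)(\tau)$, plus an additional $\mathbf{f}$-piece controlled by the spacetime $\mathbf{f}$-integral treated above. After dividing by $r_2(\tau)$, the leading contribution vanishes by Proposition \ref{prop3.4}. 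The main technical obstacle is precisely this mantel flux estimate: the extra $\mathbf{f}$-contribution, absent in the vacuum treatment of \cite{Andersson}, must be absorbed using Proposition \ref{prop3.5}; once this is done, the remainder of the argument proceeds exactly as in \cite{Andersson}.
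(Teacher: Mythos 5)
Your overall strategy is exactly the one the paper intends: the paper offers no proof of Proposition \ref{prop3.7} beyond the remark that it is ``similar to \cite{Andersson}'', but the divergence identity \eqref{3.14}, $\nabla_\nu P_{\mathcal{R}_1}^\nu=2\mathbf{e}_{kin}-\mathbf{f}$, is computed precisely so that Stokes' theorem for $\mathcal{R}_1=r\partial_r$ on $K_\tau$ produces the three terms you list. Your treatment of the bulk $\mathbf{f}$-term via Proposition \ref{prop3.5} together with $|\tau|\sim r_2(\tau)$, of the vanishing inner boundary via $|\mathbf{m}|\le\mathbf{e}$ and $r\le r_2(s)$, and of the spacelike boundary term via the splitting $B_{r_1(\tau)}\cup\left(B_{r_2(\tau)}\setminus B_{r_1(\tau)}\right)$ with Proposition \ref{prop3.6} is the standard and correct argument.

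The one step whose justification does not work as stated is the mantel flux: the ``additional $\mathbf{f}$-piece'' of $Flux(P_{\mathcal{R}_1})(\tau)$ is a hypersurface integral of $\mathbf{f}$ over $C(\tau)$, and a hypersurface integral cannot be absorbed by the spacetime integral of $\mathbf{f}$ from Proposition \ref{prop3.5}; those control different objects. Fortunately no new input is needed. Completing the square gives $\mathbf{e}-\mathbf{m}=(e^{-\alpha}\gamma_t-e^{-\beta}\gamma_r)^2+\frac12(e^{-\alpha}\phi_t-e^{-\beta}\phi_r)^2+\frac12\mathbf{f}$, so $0\le\mathbf{f}\le 2(\mathbf{e}-\mathbf{m})$ pointwise and hence $|(\mathbf{e}-\mathbf{m})-\mathbf{f}|\le \mathbf{e}-\mathbf{m}$. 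Since $dv(P_{\mathcal{R}_1})$ differs from $dv(P_T)$ exactly by the factor $-e^{\beta}r$ and by replacing $\mathbf{e}-\mathbf{m}$ with $(\mathbf{e}-\mathbf{m})-\mathbf{f}$, the entire mantel flux (not merely its leading part) is dominated by $Cr_2(\tau)$ times the integrand of $Flux(P_T)(\tau,s)$, and therefore vanishes after division by $r_2(\tau)$ by Proposition \ref{prop3.4} and the uniform bound on $\beta$. With that substitution your proof is complete.
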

Finally, we estimate the radial potential energy as below.
\begin{proposition} \label{prop3.8}
The spacetime integral of the radial potential energy does not concentrate in the past null cone of $O$, i.e,
\begin{displaymath}
\frac{1}{r_2(\tau)}\int_{K_\tau}\frac{1}{2}e^{-2\beta}(2{\gamma_r}^2+{\phi_r}^2) \bar{\mu}_g \to 0,\quad as \quad \tau \to 0.
\end{displaymath}
\end{proposition}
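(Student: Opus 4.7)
The plan is to follow the Morawetz-type scheme of Andersson-Gudapati-Szeftel \cite{Andersson}, suitably modified to account for the Klein-Gordon source. The starting point is the momentum vector $P_R$ from \eqref{3.11}: its divergence \eqref{3.12} contains the weighted radial energy term $-\frac{e^{-\beta}}{r}\mathbf{e}_{rad}$, where I write $\mathbf{e}_{rad}:=\tfrac12 e^{-2\beta}(2\gamma_r^2+\phi_r^2)$ for the integrand appearing in the statement. Applying Stokes' theorem \eqref{3.16} to $P_R$ in the region $K(\tau,s)$ and letting $s\to 0^-$ yields, after rearrangement,
\begin{align*}
\int_{K_\tau}\frac{e^{-\beta}}{r}\mathbf{e}_{rad}\,\bar{\mu}_g
=\;& \mathcal{B}(\tau) + \int_{K_\tau}\frac{e^{-\beta}}{r}\!\left(\mathbf{e}_{kin}-\tfrac12\mathbf{f}\right)\bar{\mu}_g \\
&{}-\int_{K_\tau}\!\left(e^{-\beta}\alpha_r\mathbf{e}-e^{-\alpha}\beta_t\mathbf{m}\right)\bar{\mu}_g,
\end{align*}
where $\mathcal{B}(\tau)$ collects the slice and null-flux boundary contributions.

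First I will control $\mathcal{B}(\tau)$. Using $|\mathbf{m}|\le\mathbf{e}$, the slice integral of $e^{\alpha}P_R^{t}$ on $\Sigma_\tau^O$ is uniformly bounded by $E_0$, and the slice on $\Sigma_s^O$ vanishes in the limit $s\to 0^-$ because $r_2(s)\to 0$. The null cone flux of $P_R$ splits into a flux of $P_T$ (vanishing as $\tau\to 0$ by Proposition \ref{prop3.4}) plus a null integral of $\mathbf{f}$, which is handled by the same Sobolev/$L^4$ argument as in Proposition \ref{prop3.5}. Next I will control the geometric error terms: combining the constraints \eqref{2.10}--\eqref{2.12} yields the pointwise bounds $|\alpha_r|,|\beta_t|\lesssim r\mathbf{e}$, so the error integral is dominated by $\int_{K_\tau}r\mathbf{e}^2\,\bar{\mu}_g$, which is shown to be $o(r_2(\tau))$ by combining the Sobolev-type inequality \eqref{3.29}, the $L^4$ control \eqref{3.21} on $(\gamma,\phi)$, and the previously established non-concentration results.

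For the remaining bulk source terms, the factor $1/r$ cancels against the $r$ in $\bar{\mu}_g=re^{\alpha+\beta}\,dt\,dr\,d\theta$. Fubini together with Proposition \ref{prop3.5} gives $\int_{K_\tau}\frac{e^{-\beta}}{r}\mathbf{f}\,\bar{\mu}_g=o(r_2(\tau))$, while the parallel Morawetz identity \eqref{3.14} combined with Proposition \ref{prop3.7} provides the analogous bound for $\mathbf{e}_{kin}$. Collecting all of the above, $\int_{K_\tau}\frac{e^{-\beta}}{r}\mathbf{e}_{rad}\,\bar{\mu}_g\to 0$ as $\tau\to 0$. Since $r\le r_2(\tau)$ throughout $K_\tau$ and $e^{-\beta}$ is uniformly bounded below by the estimates in \cite{chen},
\[
\int_{K_\tau}\mathbf{e}_{rad}\,\bar{\mu}_g \;\lesssim\; r_2(\tau)\int_{K_\tau}\frac{e^{-\beta}}{r}\mathbf{e}_{rad}\,\bar{\mu}_g,
\]
and dividing by $r_2(\tau)$ completes the argument.

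The principal obstacle is the control of the geometric error integral $\int_{K_\tau}r\mathbf{e}^2\,\bar{\mu}_g$, which arises from the non-Killing character of the radial multiplier $R$ in the curved Einstein metric. These nonlinear interactions, absent from a flat Morawetz argument, must be absorbed by exploiting the constraint equations in tandem with the higher integrability of $(\gamma,\phi)$ afforded by two-dimensional Sobolev embedding. The Klein-Gordon mass introduces systematic $\mathbf{f}$-contributions at each stage of the argument, but these are handled uniformly using the non-concentration of the potential energy already established in Proposition \ref{prop3.5}.
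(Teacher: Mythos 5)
Your strategy---a Morawetz identity with the radial multiplier $P_R$ from \eqref{3.11} applied on the full backward cone---is not the paper's approach, and it contains gaps that I do not see how to close. The paper instead multiplies the wave and Klein--Gordon equations by $U-U(-t_1,r_2(-t_1))$, i.e.\ it works with the current $P^{\nu}_{t_1}=2\gamma^{\nu}(\gamma-\gamma(-t_1,r_2(-t_1)))+\phi^{\nu}(\phi-\phi(-t_1,r_2(-t_1)))$, whose divergence produces $2\gamma^{\nu}\gamma_{\nu}+\phi^{\nu}\phi_{\nu}$, which is exactly (radial energy) minus (kinetic energy) with \emph{no} $1/r$ weight. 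The slice and flux terms are then small because $U-U(\mathrm{corner})$ is small (via \eqref{3.29}, \eqref{3.21}, Proposition \ref{prop3.6} and a Poincar\'e inequality on the null cone), and the kinetic part is absorbed by Proposition \ref{prop3.7}.

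Concretely, three steps of your argument fail. (i) The $1/r$ weight: Propositions \ref{prop3.5} and \ref{prop3.7} control the \emph{unweighted} quantities $\int \mathbf{f}\,\bar\mu_q$ and $\int_{K_\tau}\mathbf{e}_{kin}\,\bar\mu_g$; they say nothing about $\int_{K_\tau}r^{-1}\mathbf{f}\,\bar\mu_g$ or $\int_{K_\tau}r^{-1}\mathbf{e}_{kin}\,\bar\mu_g$. Near the axis the extra factor $r^{-1}$ is borderline non-integrable in two space dimensions (for instance \eqref{3.29} only gives $\mathbf{f}\lesssim r^{-1}$, whence $\int r^{-1}\mathbf{f}\,r\,dr\sim\int dr/r$), and one cannot even assert a priori that your left-hand side $\int_{K_\tau}r^{-1}\,\tfrac12 e^{-2\beta}(2\gamma_r^2+\phi_r^2)\,\bar\mu_g$ is finite. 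This is precisely why the multiplier $\partial_r$ is used in this setting only in the exterior region $r\geq\lambda|t|$ (Proposition \ref{prop3.6}) and not up to the axis. (ii) The slice term on $\Sigma_\tau^O$ equals $-\int\mathbf{m}\,\bar\mu_q$, which is bounded by $E_0$ but does not tend to zero; since your closing inequality converts mere boundedness of the weighted integral only into $\frac{1}{r_2(\tau)}\int_{K_\tau}\tfrac12 e^{-2\beta}(2\gamma_r^2+\phi_r^2)\,\bar\mu_g=O(1)$, you need every term in the Stokes identity, including this one, to be $o(1)$, and it is not. (iii) The error integral $\int_{K_\tau}r\,\mathbf{e}^2\,\bar\mu_g$ requires square integrability of the energy density, i.e.\ an $L^4$ bound on $\partial U$; the estimates \eqref{3.29} and \eqref{3.21} bound $U$ itself, not its gradient, so this term is likewise not controlled by anything established at this stage of the paper.
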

\begin{proof}
Let us construct the vector $P^{\nu}_{t_1}$ such that
\begin{displaymath}
P^{\nu}_{t_1}:=2 \gamma^{\nu} (\gamma-\gamma(-t_1,r_2(-t_1)))+\phi^{\nu} (\phi-\phi(-t_1,r_2(-t_1))).
\end{displaymath}
where $t_1>0$ is a small parameter. Then the divergence is given by
\begin{eqnarray*}
\nabla_{\nu} P^{\nu}_{t_1}&=&2(\Box_g \gamma)(\gamma-\gamma(-t_1,r_2(-t_1)))+(\Box_g \phi) (\phi-\phi(-t_1,r_2(-t_1)))+2\gamma^{\nu} \gamma_{\nu}+\phi^{\nu}\phi_{\nu} \\
&=& -m^2e^{-2\gamma} \phi^2 (\gamma-\gamma(-t_1,r_2(-t_1)))+m^2 e^{-2\gamma} \phi(\phi-\phi(-t_1,r_2(-t_1)))\\
&&+2\gamma^{\nu} \gamma_{\nu}+\phi^{\nu}\phi_{\nu}.
\end{eqnarray*}
Applying Stokes' theorem on $K(-t_1,-t_1^3)$,
\begin{equation} \label{3.30}
\int_{K(-t_1,-t_1^3)} \nabla_{\nu} P^{\nu}_{t_1} \bar{\mu}_g=\int_{\Sigma^O_{-t_1^3}} e^{\alpha} P_{t_1}^t \bar{\mu}_q-\int_{\Sigma^O_{-t_1}} e^{\alpha} P_{t_1}^t \bar{\mu}_q+Flux(P_{t_1})(-t_1,-t_1^3).
\end{equation}
By the energy estimates in \cite{chen}, \eqref{3.21} and \eqref{3.29}, we have
\begin{eqnarray} \label{3.31}
\int_{\Sigma^O_{-t_1^3}} e^{\alpha} P_{t_1}^t \bar{\mu}_q&\lesssim & \int_0^{r_2(-t_1^3)} |U_t|\cdot|U-U(-t_1,r_2(-t_1))| rdr \\ \nonumber
& \lesssim & \int_0^{r_2(-t_1^3)} |U_t|\cdot|U(-t_1,r_2(-t_1))| rdr+ \int_0^{r_2(-t_1^3)} |U_t|\cdot|U| rdr\\ \nonumber
& \lesssim & E_0 {t_1}^{-\frac{1}{2}}{\left(\int_0^{r_2(-t_1^3)} rdr\right)}^{\frac{1}{2}}+E_0 {\left(\int_0^{r_2(-t_1^3)} |U|^2 rdr\right)}^{\frac{1}{2}}\\ \nonumber
& \lesssim & {t_1}^{-\frac{1}{2}}{\left(\int_0^{r_2(-t_1^3)} rdr\right)}^{\frac{1}{2}}+ {\left(\int_0^{r_2(-t_1^3)} |U|^2 rdr\right)}^{\frac{1}{2}}\\ \nonumber
& \lesssim & {t_1}^{\frac{5}{2}}+ {t_1}^{\frac{3}{2}}\\ \nonumber
& \lesssim & {r_2(-t_1)}^{\frac{3}{2}}.
\end{eqnarray}
For the second term in \eqref{3.30}, we have the following estimates.
\begin{eqnarray*}
-\int_{\Sigma^O_{-t_1}} e^{\alpha} P_{t_1}^t \bar{\mu}_q&\lesssim & \int_0^{r_2(-t_1)} |U_t|\cdot|U-U(-t_1,r_2(-t_1))| rdr \\
& \lesssim &   {\left(\int_0^{r_2(-t_1)} |U_t|^2 r^{1+a}dr\right)}^{\frac{1}{2}}{\left(\int_0^{r_2(-t_1)} (U-U(-t_1,r_2(-t_1)))^2 r^{1-a}dr\right)}^{\frac{1}{2}}\\
& \lesssim &  {\left(\int_0^{r_2(-t_1)} \mathbf{e} r^{1+a}dr\right)}^{\frac{1}{2}}{\left(\int_0^{r_2(-t_1)} (U-U(-t_1,r_2(-t_1)))^2 r^{1-a}dr\right)}^{\frac{1}{2}}
\end{eqnarray*}
where $0<a<1$ is an arbitrary constant.

Similar to \cite{Andersson}, using Proposition \ref{prop3.6}, we have
\begin{displaymath}
{\left(\int_0^{r_2(-t_1)} \mathbf{e} r^{1+a}dr\right)}^{\frac{1}{2}} \lesssim  \varepsilon{r_2(-t_1)}^{\frac{a}{2}}
\end{displaymath}
for any small enough $\varepsilon>0$.

Then integrating by parts, using \eqref{3.29}, we obtain
\begin{eqnarray*}
\int_0^{r_2(-t_1)} (U-U(-t_1,r_2(-t_1)))^2 r^{1-a}dr&\lesssim & \int_0^{r_2(-t_1)} (U-U(-t_1,r_2(-t_1)))^2 dr^{2-a} \\
& = & -2\int_0^{r_2(-t_1)} (U-U(-t_1,r_2(-t_1))) U_r r^{2-a}dr \\
& \lesssim & {\left(\int_0^{r_2(-t_1)} {|U_r|}^2 r^{3-a}dr\right)}^{\frac{1}{2}} {\left(\int_0^{r_2(-t_1)} (U-U(-t_1,r_2(-t_1)))^2 r^{1-a}dr\right)}^{\frac{1}{2}}\\
& \lesssim & E_0 {r_2(-t_1)}^{1-\frac{a}{2}} {\left(\int_0^{r_2(-t_1)} (U-U(-t_1,r_2(-t_1)))^2 r^{1-a}dr\right)}^{\frac{1}{2}}
\end{eqnarray*}
which implies
\begin{displaymath}
{\left(\int_0^{r_2(-t_1)} (U-U(-t_1,r_2(-t_1)))^2 r^{1-a}dr\right)}^{\frac{1}{2}} \lesssim  E_0{r_2(-t_1)}^{1-\frac{a}{2}}.
\end{displaymath}
Thus, we have for the second integral in \eqref{3.30},
\begin{equation} \label{3.32}
-\int_{\Sigma^O_{-t_1}} e^{\alpha} P_{t_1}^t \bar{\mu}_q \lesssim  \varepsilon{r_2(-t_1)}.
\end{equation}
Let us consider the flux of $P_{t_1}$ through the null surface $C(-t_1,-t_1^3)$ now. By Proposition \ref{prop3.4}, Poincar\'{e}'s inequality and \cite{chen}, we have
\begin{eqnarray} \label{3.33}
 \nonumber Flux(P_{t_1})(-t_1,-t_1^3)&= & \int_{C(-t_1,-t_1^3)} dv(P_{t_1}) \bar{\mu}_v \\
& \lesssim & \int_{C(-t_1,-t_1^3)} |U(u,0)-U(-t_1-r_2(-t_1),0)|\cdot |U_u| rdu \\ \nonumber
& \lesssim & \varepsilon {\left(\int_{C(-t_1,-t_1^3)} (U(u,0)-U(-t_1-r_2(-t_1),0))^2 Rdu\right)}^{\frac{1}{2}}\\ \nonumber
& \lesssim & \varepsilon r_2(-t_1){\left(\int_{C(-t_1,-t_1^3)} {U_u}^2(u,0) Rdu\right)}^{\frac{1}{2}}\\ \nonumber
& \lesssim & \varepsilon {r_2(-t_1)}.
\end{eqnarray}
Now, if we go back to Stokes' theorem \eqref{3.30} and use the estimates \eqref{3.29} \eqref{3.31} \eqref{3.32} and \eqref{3.33}, we get
\begin{eqnarray*}
\int_{K(-t_1,-t_1^3)}\frac{1}{2}e^{-2\beta}(2{\gamma_r}^2+{\phi_r}^2) \bar{\mu}_g&\lesssim&\varepsilon {r_2(-t_1)}+{r_2(-t_1)}^{\frac{3}{2}}+\int_{K(-t_1,-t_1^3)}\mathbf{e}_{kin} \bar{\mu}_g \\
&&+\int_{K(-t_1,-t_1^3)}\phi^2 |\gamma-\gamma(-t_1,r_2(-t_1))|\bar{\mu}_g\\
&&+\int_{K(-t_1,-t_1^3)}|\phi||\phi-\phi(-t_1,r_2(-t_1))|\bar{\mu}_g \\
&\lesssim&\varepsilon {r_2(-t_1)}+{r_2(-t_1)}^{\frac{3}{2}}+\int_{K(-t_1,-t_1^3)}\mathbf{e}_{kin} \bar{\mu}_g \\
&&+\int_{K(-t_1,-t_1^3)}U^4\bar{\mu}_g+\int_{K(-t_1,-t_1^3)}U^2\bar{\mu}_g\\
&&+\int_{K(-t_1,-t_1^3)}U(-t_1,r_2(-t_1))^2\bar{\mu}_g \\
&\lesssim&\varepsilon {r_2(-t_1)}+{r_2(-t_1)}^{\frac{3}{2}}+\int_{K(-t_1,-t_1^3)}\mathbf{e}_{kin} \bar{\mu}_g \\
&&+\int_{K(-t_1,-t_1^3)}U^4\bar{\mu}_g+\int_{K(-t_1,-t_1^3)}U^2\bar{\mu}_g\\
&&+\int_{K(-t_1,-t_1^3)}r^{-\frac{1}{2}}\bar{\mu}_g \\
&\lesssim&\varepsilon {r_2(-t_1)}+{r_2(-t_1)}^{\frac{3}{2}}+\int_{K(-t_1,-t_1^3)}\mathbf{e}_{kin} \bar{\mu}_g \\
&&+\int_{K(-t_1,-t_1^3)}U^4\bar{\mu}_g+\int_{K(-t_1,-t_1^3)}U^2\bar{\mu}_g.
\end{eqnarray*}

Let $t_1 \to 0$, the above term tends to $0$ according to Proposition \ref{prop3.5} and Proposition \ref{prop3.7}. This concludes the proof of the Proposition \ref{prop3.8}.
\end{proof}

Therefore, combining Proposition \ref{prop3.5}, Proposition \ref{prop3.7} and Proposition \ref{prop3.8}, we have finished the proof of Theorem \ref{thm3.1}.

\section{Small energy implies global regularity}
We consider the Cauchy problem for the 2+1 dimensional radially symmetric Einstein-wave-Klein-Gordon system  \eqref{1.6}-\eqref{1.11}  on $(M,g)$. For $U=(\gamma,\phi)$, our goal in this section is to prove the following theorem:
\begin{theorem} \label{thm5.1}
There exists a small enough $\varepsilon>0$, such that for any regular Cauchy data of energy $E_0<\varepsilon$, the Cauchy problem for $(\ref{1.6})$-$(\ref{1.11})$ admits an unique globally smooth solution.
\end{theorem}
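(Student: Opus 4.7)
The plan is to argue by contradiction using the local well-posedness of Theorem \ref{thm1.5} together with a bootstrap argument on the radial wave-Klein-Gordon formulation \eqref{1.18}. For initial data of energy $E_0 < \varepsilon$, Theorem \ref{thm1.5} furnishes a unique smooth solution $(M, g, \gamma, \phi)$ on a maximal interval $[t_0, T^*)$. Suppose for contradiction that $T^* < \infty$. By paper I \cite{chen}, any first singularity must occur on the axis, and after a time shift we may place it at the origin $O$.

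The main idea is to view $U = (\gamma, \phi)$ as a solution of a perturbation of the radially symmetric $2+1$ Minkowski wave equation via \eqref{1.18}, with right-hand side $h = \mathcal{H} + \tfrac{r_T}{r} U_T - \bigl( \tfrac{r_R}{r} - \tfrac{1}{R} \bigr) U_R$. Under the small-energy hypothesis $E_0 < \varepsilon$ and the energy estimates of \cite{chen}, the metric quantities $\alpha, \beta, \lambda$, and $\gamma$ remain close to their Minkowski values, so the coefficients $r_T/r$ and $r_R/r - 1/R$ are themselves pointwise controlled by energy densities, and the exponential factors $e^{2\lambda - 2\gamma}$ remain close to $1$. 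Combined with the uniform bound $r |U|^2 \leq C$ from \eqref{3.29}, this means $h$ acts as a small, subcritical forcing relative to the Minkowski wave operator, formally of the type that admits standard small-data global regularity for $2+1$ dimensional radial wave equations.

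I would then close a bootstrap of the following form: assume that $\|U\|_{L^\infty([t_0, t]; H^s)} + \|DU\|_{X} \leq M \varepsilon$ on $[t_0, t]$ for every $t < T^*$, for some fixed $s > 1$ and some radial Strichartz space $X$ admissible for $\Box_m$ in $2+1$ dimensions. Using radial Strichartz estimates for $\Box_m$ applied to \eqref{1.18}, together with H\"older's inequality and the smallness of the coefficients from the previous paragraph, I would recover the same bound with constant $M\varepsilon / 2$, closing the bootstrap. A standard continuation criterion then extends the smooth solution past $T^*$, contradicting the maximality of $T^*$.

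The main obstacle is the quasilinear coupling: each application of a Strichartz-type estimate to \eqref{1.18} requires smallness of the coefficients, yet those coefficients depend on $U$ via the Einstein constraints \eqref{2.10}--\eqref{2.12}. To close the loop, I would alternate between using the current bound on $U$ to estimate $\alpha, \beta, \lambda$ through direct integration of \eqref{2.10}--\eqref{2.12} along radial rays, and using the resulting coefficient bounds to improve the wave estimate for $U$ via \eqref{1.18}. The smallness of $\varepsilon$ is chosen exactly so that this alternation yields a contraction at each step, preserving the bootstrap hypothesis. A secondary technical point is the Klein-Gordon linear term $m^2 e^{2\lambda - 2\gamma} \phi$ on the right-hand side of the equation for $\phi$: it contributes a forcing that is only linear in $U$, so it must be absorbed into the left-hand side or treated by the Duhamel principle for the Klein-Gordon equation, which I expect requires only cosmetic changes to the Strichartz estimates used above.
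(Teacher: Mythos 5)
There is a genuine gap here, and it sits at the heart of the problem rather than in the technical periphery. First, your bootstrap hypothesis $\|U\|_{L^\infty H^s}+\|DU\|_X\leq M\varepsilon$ for some $s>1$ is not consistent with the hypotheses of the theorem: only the \emph{energy} $E_0$ is assumed small, not any higher Sobolev norm of the data, so the bootstrap assumption fails already at $t=t_0$. The paper's argument is structured precisely to avoid this: it works with the mixed quantity $X=\sup r^{\delta}|U_v|$ (with $\frac12<\delta<\frac23$) and closes an inequality of the form $X\lesssim A+\varepsilon X$, where $A$ is a possibly \emph{large} constant depending on the data and only the coefficient of $X$ on the right is small. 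Smallness of $\varepsilon$ buys absorption of the $\varepsilon X$ term, not smallness of the solution. Second, the $2+1$ dimensional problem is energy-critical, which is exactly why Shatah--Tahvildar-Zadeh, Struwe, and Andersson--Gudapati--Szeftel do not run a Strichartz bootstrap for equivariant/radial wave maps: small energy does not control the scaling-critical Strichartz norms without additional structure, and the non-concentration of energy (Section 3) plus pointwise characteristic estimates are what substitute for it. Your claim that $h$ is ``a small, subcritical forcing'' is therefore unsubstantiated.

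Third, you do not engage with the singular behaviour of the coefficients at the axis, which is where the first singularity must occur. The term $\bigl(\tfrac{r_R}{r}-\tfrac1R\bigr)U_R$ is only harmless if one proves quantitatively that $r$ is close to $R$ and $\partial_v r$ close to $\tfrac12$ with a \emph{rate} in $R$ (Lemma 5.5 in the paper gives $|\partial_v r-\tfrac12|\lesssim\varepsilon R^{s}$ and $|r-R|\lesssim \varepsilon R^{1+s}$), and these bounds in turn require the pointwise bound $|U|\leq A$ and the $L^4(R\,du)$ flux control of $\phi$ --- none of which follow from an $H^s$ bootstrap you have not initialized. The paper's actual route is a representation-formula decomposition of the $2+1$ dimensional fundamental solution into the pieces I, II, III of Lemma 5.3, a weighted estimate on $\partial_v U$, and then a full hierarchy of higher-order quantities $X^{(i)},Y^{(i)},Z^{(i)},L^{(i)}$ closed with the help of a Morawetz-type estimate (Lemma 5.6). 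Your outline replaces all of this with an appeal to ``standard small-data global regularity,'' which does not exist for this critical, axis-singular, quasilinearly coupled system. The treatment of the Klein--Gordon mass term is indeed manageable (via the flux bound on $\|\phi\|_{L^4(R\,du)}$), so that part of your plan is the least of the worries.
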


\subsection{Estimates for $\partial_v U$}

By \cite{chen}, it is clear that away from the axis we have a (1,1)-dimensional problem, for which the global regularity is known, therefore the first singularity occurs at the centre. Thus, we assume that the first possible singularity takes place at $(T^*,0)$.

Then for $\bar{T}$ close enough to $T^*$, by shifting time, we can consider the problem on
\begin{displaymath}
K_0=\left\{ (T,R)|0 \leq R < T^*-\bar{T}-T,\  T \geq 0  \right\}.
\end{displaymath}
Note that
\begin{displaymath}
T=\frac{u+v}{2},\ R=\frac{v-u}{2}
\end{displaymath}
\begin{displaymath}
\partial_u=\frac{\partial_T-\partial_R}{2},\ \partial_v=\frac{\partial_T+\partial_R}{2}
\end{displaymath}

Let
\begin{displaymath}
X \triangleq \sup\limits_{(T,R) \in K_0} r^{\delta} | U_{v}|
\end{displaymath}
where $ \frac{1}{2}<\delta<\frac{2}{3}$.

 We will show that $X$ is bounded. Before doing this, we give a lemma firstly.
\begin{lemma} \label{prop5.2}
Let $\mu>0,\ a,b>0,\ b<1,\ a+b>1$, then
\begin{displaymath}
\int_{0}^{+\infty} {(\mu+x)}^{-a} x^{-b} dx \leq C \mu^{-(a+b-1)}.
\end{displaymath}
\end{lemma}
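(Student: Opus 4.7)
The plan is to reduce the estimate to a dimensionless Beta-type integral by rescaling, then verify integrability at the two endpoints using the two hypotheses. The whole content of the lemma is that the conditions $b<1$ and $a+b>1$ are precisely the integrability conditions at $0$ and at $+\infty$, while the power of $\mu$ on the right comes out by simple homogeneity.

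First I would perform the change of variables $x=\mu y$, $dx=\mu\,dy$. The integrand becomes $(\mu+\mu y)^{-a}(\mu y)^{-b}=\mu^{-a-b}(1+y)^{-a}y^{-b}$, so that
\[
\int_{0}^{+\infty}(\mu+x)^{-a}x^{-b}\,dx \;=\; \mu^{-(a+b-1)}\int_{0}^{+\infty}(1+y)^{-a}y^{-b}\,dy.
\]
This already isolates the claimed $\mu$-dependence, so it remains only to show that the normalized $y$-integral is a finite constant $C=C(a,b)$ independent of $\mu$.

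Next I would split $\int_{0}^{+\infty}$ at $y=1$. On $[0,1]$ the factor $(1+y)^{-a}$ is bounded, so convergence reduces to $\int_{0}^{1}y^{-b}\,dy<\infty$, which holds exactly because $b<1$. On $[1,+\infty)$ we have $(1+y)^{-a}y^{-b}\asymp y^{-(a+b)}$, and $\int_{1}^{+\infty}y^{-(a+b)}\,dy<\infty$ holds exactly because $a+b>1$. Adding the two pieces one recognizes $C$ as the Beta-function value $B(1-b,\,a+b-1)$, depending only on $a,b$. There is no real obstacle: the hypotheses on $b$ and $a+b$ are matched one-to-one with the endpoint integrability conditions after the scaling, and the positivity assumption $\mu>0$ is used only to make the substitution $x=\mu y$ legitimate.
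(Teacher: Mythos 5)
Your proof is correct and is essentially the paper's argument in rescaled form: the paper splits the original integral at $x=\mu$ and bounds $(\mu+x)^{-a}$ by $\mu^{-a}$ on $[0,\mu]$ and by $x^{-a}$ on $[\mu,\infty)$, which is exactly your split at $y=1$ after the substitution $x=\mu y$. The hypotheses $b<1$ and $a+b>1$ play the same role in both versions, so no gap.
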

\begin{proof}
\begin{eqnarray*}
\int_{0}^{+\infty} {(\mu+x)}^{-a} x^{-b} dx & \leq & \int_{0}^{\mu} {(\mu+x)}^{-a} x^{-b} dx+\int_{\mu}^{+\infty} {(\mu+x)}^{-a} x^{-b} dx \\
& \leq & \mu^{-a} \int_{0}^{\mu}  x^{-b} dx+\int_{\mu}^{+\infty} x^{-(a+b)} dx \\
&=& {(1-b)}^{-1} \mu^{-(a+b-1)}+{(a+b-1)}^{-1} \mu^{-(a+b-1)} \\
&=&C \mu^{-(a+b-1)}.
\end{eqnarray*}
\end{proof}

Now that we consider the wave equations \eqref{1.18} on the flat spacetime, we can define the Cartesian coordinate correspondingly and we denote it by $x$. In addition, by \cite{chen}, without loss of generality we can regard $R$ as $r$ in the following part. Then, we have the following lemma, which plays a central role in our subsequent proof.
\begin{lemma} \label{lem5.3}
There exists a constant $A>0$ depending on the initial data in $K_0$ such that the following estimate holds for any $(T_0,x_0)$ in $K_0$,
\begin{equation} \label{5.14}
|U(T_0+|x_0|,0)-U(T_0,x_0)| \lesssim {r}^{1-\delta}(T_0,|x_0|) \left(A+\varepsilon X\right).
\end{equation}
where $x_0$ denotes the corresponding Cartesian coordinates.
\end{lemma}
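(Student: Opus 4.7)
The starting point is the observation that the two endpoints $P:=(T_0,x_0)$ and $R^*:=(T_0+|x_0|,0)$ both lie on the ingoing null characteristic $v=T+R=T_0+|x_0|$ of the flat $2+1$ Minkowski background. Along this ray $dv=0$ and $dU=U_u\,du$, so after parameterizing the segment by the radial coordinate $R\in[0,|x_0|]$ (with $u=T_0+|x_0|-2R$, $du=-2\,dR$) one has
\[
U(T_0+|x_0|,0)-U(T_0,x_0) \;=\; 2\int_0^{|x_0|} U_u\big|_{v=T_0+|x_0|}\,dR.
\]
This reduces the problem to estimating a line integral of $U_u$, a derivative which is not directly controlled by the bootstrap quantity $X$. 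The idea is therefore to trade $U_u$ for $U_v$ (which is bounded by $X$) plus lower order source contributions.

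The exchange is made using equation \eqref{1.18} in null form. Writing $\Box_mU=-4U_{uv}+(U_v-U_u)/R$ and inserting this into \eqref{1.18} gives $U_u=U_v-4R\,U_{uv}-R\,h$. Substituting into the line integral above and integrating $\int R\,U_{uv}\,du=\int R\,\partial_uU_v\,du$ by parts along the characteristic (using $\partial_uR=-\tfrac12$ and $R=0$ at the axis endpoint) yields the representation
\[
U(T_0+|x_0|,0)-U(T_0,x_0) \;=\; 4|x_0|\,U_v(P)\;-\;2\!\int_0^{|x_0|}U_v\,dR\;-\;2\!\int_0^{|x_0|}R\,h\,dR,
\]
in which only $U_v$ on the characteristic and the source $h$ appear. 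The first two terms are controlled by the definition of $X$: the pointwise bound $|U_v|\leq X r^{-\delta}$ together with Lemma~\ref{prop5.2} and the computation $\int_0^{|x_0|}R^{-\delta}dR\lesssim |x_0|^{1-\delta}$ produces a contribution of the desired order $r^{1-\delta}$.

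It remains to bound $\int_0^{|x_0|} Rh\,dR$ by decomposing $h$ as in \eqref{1.18}. The potential piece $\mathcal{H}$ is pointwise $\lesssim \phi^2$ (respectively $|\phi|$) for $U=\gamma$ (respectively $U=\phi$), so the Sobolev estimate \eqref{3.21} combined with the uniform lower bound on $\gamma$ gives $\int R|\mathcal{H}|\,dR\lesssim A|x_0|\leq A|x_0|^{1-\delta}$, contributing to the $A\,r^{1-\delta}$ term. The metric-correction pieces $\tfrac{r_T}{r}U_T$ and $\big(\tfrac{r_R}{r}-\tfrac{1}{R}\big)U_R$ are where the smallness of the energy enters: the coefficients $r_T/r$ and $r_R/r-1/R$ are of size $O(\varepsilon)$ via the null-coordinate equations \eqref{1.12}--\eqref{1.14} for $r$ and the assumption $E_0\leq\varepsilon$. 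Pairing this smallness with $|U_v|\leq Xr^{-\delta}$ on the $U_v$ part and with the null-flux bound $\int_0^{|x_0|}R|U_u|^2\,dR\lesssim E_0\leq\varepsilon$ on the $U_u$ part (via Cauchy--Schwarz against the weight $R$) produces a bound of the form $\varepsilon X\,r^{1-\delta}$.

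\textbf{Main obstacle.} The delicate step is the source estimate: one must verify via \eqref{1.12}--\eqref{1.14} that the metric-correction coefficients $r_T/r$ and $r_R/r-1/R$ are genuinely of size $\varepsilon$ rather than merely bounded, and the weights in the Cauchy--Schwarz pairings along the characteristic segment must be chosen so that the null-flux energy on $U_u$ is paired against the small coefficients to give exactly the $r^{1-\delta}$ scaling with a prefactor $\varepsilon X$. The interaction of the boundary term $4|x_0|U_v(P)$ and the line integral $2\int U_v\,dR$, which both scale like $X r^{1-\delta}$, must be absorbed into the $\varepsilon X$ bucket using the smallness of $E_0\leq\varepsilon$ so that the overall estimate closes in the form $r^{1-\delta}(A+\varepsilon X)$.
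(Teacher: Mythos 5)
Your reduction to a line integral along the ingoing characteristic $v=T_0+|x_0|$ and the subsequent integration by parts are algebraically correct: one does obtain
\begin{displaymath}
U(T_0+|x_0|,0)-U(T_0,x_0)=4|x_0|\,U_v(T_0,x_0)-2\int_0^{|x_0|}U_v\,dR-2\int_0^{|x_0|}R\,h\,dR .
\end{displaymath}
The genuine gap is precisely the step you flag as the ``main obstacle'': the first two terms cannot be pushed into the $\varepsilon X$ bucket. The only bound available for them is the pointwise one coming from the definition of $X$, namely $|U_v|\le X r^{-\delta}$, which gives $4|x_0|\,|U_v(T_0,x_0)|+2\int_0^{|x_0|}|U_v|\,dR\lesssim X\,|x_0|^{1-\delta}$ with a constant of order one, not of order $\varepsilon$. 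The smallness $E_0\le\varepsilon$ does not help: the energy flux through an ingoing null surface $v=\mathrm{const}$ controls $\int|U_u|^2\,r\,du$, not $\int|U_v|^2\,r\,du$, and in any case no integrated quantity makes the single point value $U_v(T_0,x_0)$ small relative to its own supremum $X$. Your argument therefore only yields $|U(T_0+|x_0|,0)-U(T_0,x_0)|\lesssim r^{1-\delta}(A+CX)$ with $C=O(1)$, and the sole purpose of this lemma --- feeding the bootstrap $X\lesssim A+\varepsilon X$, hence $X\le A$, in the paragraph following the lemma --- is defeated: $X\lesssim A+CX$ does not close.

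The paper's proof avoids this by never integrating $U_u$ or $U_{uv}$ along a characteristic. It represents $U$ at the two points by the explicit $2+1$ dimensional fundamental solution (Duhamel formula), splits the difference into the free part $\uppercase\expandafter{\romannumeral1}$ and kernel/region differences $\uppercase\expandafter{\romannumeral2}$, $\uppercase\expandafter{\romannumeral3}$, and estimates these with Lemma \ref{prop5.2} and the flux bound \eqref{5.15}. In that representation the difference of the two free evolutions contributes only $|U^0(T_0+|x_0|,0)-U^0(T_0,x_0)|\le A|x_0|$, and $U_v$ --- hence $X$ --- enters only through the source $h$, where it always carries the coefficient $\frac{2Rr_u+r}{r}$, which is genuinely of size $O(\varepsilon R)$ by the small-energy control of \eqref{1.12}. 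That structural feature is what produces the prefactor $\varepsilon$ in front of $X$, and it is exactly what your characteristic representation loses through the boundary term of the integration by parts. Your treatment of the source integral $\int R\,h\,dR$ is fine in spirit, but it is the benign part of the argument; to salvage your route you would need a representation in which $U_v$ restricted to the characteristic appears only with small coefficients, which is essentially what the fundamental-solution kernel provides.
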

\begin{proof}
First, for $(\ref{1.18})$, we have that
\begin{displaymath}
U(T_0+|x_0|,0)=U^0 (T_0+|x_0|,0)+\frac{1}{2\pi}\int \int_{|y| \leq T_0+|x_0|-\tau} \frac{h(\tau,y)}{\sqrt{{(T_0+|x_0|-\tau)}^2-{|y|}^2}} dy d \tau
\end{displaymath}
\begin{displaymath}
U(T_0,x_0)=U^0 (T_0,x_0)+\frac{1}{2\pi}\int \int_{|x_0-y| \leq T_0-\tau} \frac{h(\tau,y)}{\sqrt{{(T_0-\tau)}^2-{|x_0-y|}^2}} dy d \tau
\end{displaymath}
where $U^0$ is the solution to the linear problem $\Box_m {U^{0}}=0$. Then we obtain
\begin{eqnarray*}
U(T_0+|x_0|,0)-U(T_0,x_0) &= & U^0(T_0+|x_0|,0)-U^0(T_0,x_0)\\
&&+\frac{1}{2\pi} \iint\nolimits_{|x_0-y|>T_0-\tau,|y| \leq T_0+|x_0|-\tau} \frac{h(\tau,y)}{\sqrt{{(T_0+|x_0|-\tau)}^2-{|y|}^2}} dy d \tau \\
&&+\frac{1}{2\pi} \iint\nolimits_{|x_0-y| \leq T_0-\tau} \left(\frac{1}{\sqrt{{(T_0+|x_0|-\tau)}^2-{|y|}^2}} \right. \\
&&- \left. \frac{1}{\sqrt{{(T_0-\tau)}^2 -{|x_0-y|}^2}} \right) h(\tau,y) dy d \tau  \\
& \triangleq & \uppercase\expandafter{\romannumeral1}+\uppercase\expandafter{\romannumeral2}+
\uppercase\expandafter{\romannumeral3}.
\end{eqnarray*}
By the smoothness of $U^0$,
\begin{equation*}
|\uppercase\expandafter{\romannumeral1}|= |U^0(T_0+|x_0|,0)-U^0(T_0,x_0)| \leq A  |x_0| \lesssim A r(T_0,|x_0|) \lesssim A r^{1-\delta}(T_0,|x_0|).
\end{equation*}
In null coordinates, we have
\begin{eqnarray*}
|\uppercase\expandafter{\romannumeral2}| & \leq & \int_{T_0-|x_0|}^{T_0+|x_0|} \int_{-v}^{v} \frac{|h(u,v)| R}{\sqrt{(T_0+|x_0|-u)(T_0+|x_0|-v)}} du dv \\
& \lesssim & \int_{T_0-|x_0|}^{T_0+|x_0|} \int_{-v}^{v} \frac{|\mathcal{H}_1| R}{\sqrt{(T_0+|x_0|-u)(T_0+|x_0|-v)}} du dv \\
&&+\int_{T_0-|x_0|}^{T_0+|x_0|} \int_{-v}^{v} \frac{|\mathcal{H}_2| R}{\sqrt{(T_0+|x_0|-u)(T_0+|x_0|-v)}} du dv \\
&&+\int_{T_0-|x_0|}^{T_0+|x_0|} \int_{-v}^{v} \frac{|\frac{2R r_u+r}{r} U_v| }{\sqrt{(T_0+|x_0|-u)(T_0+|x_0|-v)}} du dv \\
&&+\int_{T_0-|x_0|}^{T_0+|x_0|} \int_{-v}^{v} \frac{|\frac{2R r_v-r}{r} U_u| }{\sqrt{(T_0+|x_0|-u)(T_0+|x_0|-v)}} du dv \\
& \triangleq &  \uppercase\expandafter{\romannumeral2}_1+\uppercase\expandafter{\romannumeral2}_2+\uppercase\expandafter{\romannumeral2}_3+\uppercase\expandafter{\romannumeral2}_4
\end{eqnarray*}

For $\uppercase\expandafter{\romannumeral2}_1$, we have
\begin{eqnarray*}
\uppercase\expandafter{\romannumeral2}_1 & \lesssim & \int_{T_0-|x_0|}^{T_0+|x_0|} \int_{-v}^{v} \frac{ {\phi}^2 R}{\sqrt{(T_0+|x_0|-u)(T_0+|x_0|-v)}} du dv \\
& \lesssim & \int_{T_0-|x_0|}^{T_0+|x_0|} \frac{1}{\sqrt{T_0+|x_0|-v}} dv {\left( \int_{-v}^{v}  {\phi}^{4} R du \right)}^{\frac{1}{2}} {\left( \int_{-v}^{v} \frac{R}{T_0+|x_0|-u} du \right)}^{\frac{1}{2}} \\
& \lesssim & \int_{T_0-|x_0|}^{T_0+|x_0|} \frac{1}{\sqrt{T_0+|x_0|-v}} dv {\left( \int_{-2\bar{T}-v}^{v}  {\phi}^{4} R du \right)}^{\frac{1}{2}} {\left( \int_{-v}^{v} \frac{1}{R^{2\delta-1}(T_0+|x_0|-u)} du \right)}^{\frac{1}{2}} \\
& \lesssim & \int_{T_0-|x_0|}^{T_0+|x_0|} \frac{1}{\sqrt{T_0+|x_0|-v}} dv {\left( \int_{0}^{v+\bar{T}}  {\phi}^{4}(v-2R,v) R dR \right)}^{\frac{1}{2}} {\left( \int_{-v}^{v} \frac{1}{R^{2\delta-1}(T_0+|x_0|-u)} du \right)}^{\frac{1}{2}}
\end{eqnarray*}
using the U(1) symmetry, energy estimates on the flux and Sobolev's inequality, we get
\begin{eqnarray*}
{\left( \int_{0}^{v+\bar{T}} {\phi}^{4}(v-2R,v) R dR \right)}^{\frac{1}{4}} &=& {\left( \frac{1}{2\pi} \int_{0}^{2\pi} \int_{0}^{v+\bar{T}} {\phi}^{4}(v-2R,v) R dR d\theta \right)}^{\frac{1}{4}} \\ \nonumber
& \lesssim & {\left( \int_{-2\bar{T}-v}^{v} {\phi_u}^{2} R du \right)}^{\frac{1}{2}}+{\left( \int_{-2\bar{T}-v}^{v} {\phi}^{2} R du \right)}^{\frac{1}{2}} \\ \nonumber
& \lesssim & C+{\left( \int_{-2\bar{T}-v}^{v} {\phi}^{2} R du \right)}^{\frac{1}{2}}.
\end{eqnarray*}
Now we estimate the second integral, by the regularity of initial data, the energy estimates on flux, and Cauchy-Schwartz inequality, we have
\begin{eqnarray*}
 \int_{-2\bar{T}-v}^{v} {\phi}^{2} R du
 & \lesssim&  \int_{0}^{v+\bar{T}} {\phi}^{2}(v-2R,v) R dR\\
 &\lesssim& \int_{0}^{v+\bar{T}} {\phi}^{2}(v-2R,v) dR^2  \\
&=& {(v+\bar{T})}^2 \phi^2(-\bar{T},v+\bar{T})+\int_{-2\bar{T}-v}^v R^2 \phi \phi_u du \\
& \leq & C+C(T^*)\int_{2T_0-v}^v R |\phi| |\phi_u| du \\
& \lesssim & C+\frac{1}{8}\int_{-2\bar{T}-v}^v R \phi^2 du+2\int_{-2\bar{T}-v}^v R {\phi_u}^2 du \\
& \leq & C+\frac{1}{8}\int_{-2\bar{T}-v}^v R \phi^2 du
\end{eqnarray*}
where $-\bar{T}$ is the initial time in Theorem \ref{thm1.6}. This implies
\begin{equation*}
{\|\phi\|}_{L^2(Rdu)}  \leq   C.
\end{equation*}
Thus,
\begin{equation} \label{5.15}
{\|\phi\|}_{L^4(Rdu)}  \leq  C.
\end{equation}
Then, by Lemma \ref{prop5.2},
\begin{eqnarray} \label{5.16}
\int_{-v}^{v} \frac{1}{R^{2\delta-1}(T_0+|x_0|-u)} du &=& \int_{0}^{v} \frac{1}{R^{2\delta-1}(T_0+|x_0|-v+2R)} dR \\ \nonumber
& \lesssim & \frac{1}{{(T_0+|x_0|-v)}^{2\delta-1}}
\end{eqnarray}
thus,
\begin{eqnarray*}
\uppercase\expandafter{\romannumeral2}_1 & \lesssim &   \int_{T_0-|x_0|}^{T_0+|x_0|} \frac{1}{{(T_0+|x_0|-v)}^{\delta}} dv \\ \nonumber
& \lesssim &  {|x_0|}^{1-\delta}\lesssim r^{1-\delta}(u_0,v_0).
\end{eqnarray*}
Similarly, we have, for $\uppercase\expandafter{\romannumeral2}_2$,
\begin{eqnarray*}
\uppercase\expandafter{\romannumeral2}_2 & \lesssim & \int_{T_0-|x_0|}^{T_0+|x_0|} \int_{-v}^{v} \frac{ e^{-2\gamma}{\phi} R}{\sqrt{(T_0+|x_0|-u)(T_0+|x_0|-v)}} du dv \\
& \lesssim & \int_{T_0-|x_0|}^{T_0+|x_0|} \frac{1}{\sqrt{T_0+|x_0|-v}} dv {\left( \int_{-v}^{v} e^{-2\gamma} {\phi}^{2} R du \right)}^{\frac{1}{2}} {\left( \int_{-v}^{v} \frac{R}{T_0+|x_0|-u} du \right)}^{\frac{1}{2}} \\
& \lesssim &\varepsilon \int_{T_0-|x_0|}^{T_0+|x_0|} \frac{1}{\sqrt{T_0+|x_0|-v}} dv  {\left( \int_{-v}^{v} \frac{1}{R^{2\delta-1}(T_0+|x_0|-u)} du \right)}^{\frac{1}{2}} \\
& \lesssim &  {|x_0|}^{1-\delta}\lesssim r^{1-\delta}(u_0,v_0).
\end{eqnarray*}
Next, we will estimate $\uppercase\expandafter{\romannumeral2}_3$,
\begin{equation*}
\uppercase\expandafter{\romannumeral2}_3  \lesssim  X\int_{T_0-|x_0|}^{T_0+|x_0|} \frac{1}{\sqrt{T_0+|x_0|-v}}dv \int_{-v}^{v} \frac{|\frac{2R r_u+r}{r} |}{r^{\delta} \sqrt{T_0+|x_0|-u}} du.
\end{equation*}
Noting that by \eqref{5.15},
\begin{displaymath}
|\frac{2R r_u+r}{r} | \lesssim  R,
\end{displaymath}
and
\begin{displaymath}
|\frac{2R r_v-r}{r} | \lesssim  R,
\end{displaymath}
by Lemma \ref{prop5.2},
\begin{eqnarray*}
\uppercase\expandafter{\romannumeral2}_3  & \lesssim & \varepsilon X\int_{T_0-|x_0|}^{T_0+|x_0|} \frac{1}{{(T_0+|x_0|-v)}^{\delta}}dv \\ \nonumber
& \lesssim & \varepsilon X {|x_0|}^{1-\delta}\lesssim \varepsilon X r^{1-\delta}(u_0,v_0).
\end{eqnarray*}
Similarly, we can estimate $\uppercase\expandafter{\romannumeral2}_4$ as above,
\begin{eqnarray*}
\uppercase\expandafter{\romannumeral2}_4 &\lesssim & \int_{T_0-|x_0|}^{T_0+|x_0|} \frac{1}{\sqrt{T_0+|x_0|-v}}dv \int_{-v}^{v} \frac{r|U_u |}{ \sqrt{T_0+|x_0|-u}} du \\
& \lesssim& \varepsilon \int_{T_0-|x_0|}^{T_0+|x_0|} \frac{1}{\sqrt{T_0+|x_0|-v}}dv{\left( \int_{-v}^{v} \frac{r}{(T_0+|x_0|-u)} du \right)}^{\frac{1}{2}} \\
&\lesssim& \int_{T_0-|x_0|}^{T_0+|x_0|} \frac{1}{\sqrt{T_0+|x_0|-v}}dv{\left( \int_{-v}^{v} \frac{1}{R^{2\delta-1}(T_0+|x_0|-u)} du \right)}^{\frac{1}{2}} \\
& \lesssim &r^{1-\delta}(u_0,v_0).
\end{eqnarray*}

To estimate $ \uppercase\expandafter{\romannumeral3}$, we break it into two parts,
\begin{eqnarray*}
\uppercase\expandafter{\romannumeral3} &=& -\frac{1}{2\pi} \iint\nolimits_{|x_0-y| \leq T_0-\tau} \left(\frac{1}{\sqrt{{(T_0-\tau)}^2 -{|x_0-y|}^2}}
-  \frac{1}{\sqrt{{(T_0+|x_0|-\tau)}^2-{|y|}^2}} \right) h(\tau,y) dy d \tau \\
&=& -\frac{1}{2\pi}\iint\nolimits_{|x_0|+|y| \leq T_0-\tau} \left(\frac{1}{\sqrt{{(T_0-\tau)}^2 -{|x_0-y|}^2}}
-  \frac{1}{\sqrt{{(T_0+|x_0|-\tau)}^2-{|y|}^2}} \right) h(\tau,y) dy d \tau \\
&-&\frac{1}{2\pi}\iint\nolimits_{|x_0-y| \leq T_0-\tau , |x_0|+|y| > T_0-\tau} \left(\frac{1}{\sqrt{{(T_0-\tau)}^2 -{|x_0-y|}^2}}
-  \frac{1}{\sqrt{{(T_0+|x_0|-\tau)}^2-{|y|}^2}} \right) h(\tau,y) dy d \tau \\
& \triangleq & {\uppercase\expandafter{\romannumeral3}}_A+{\uppercase\expandafter{\romannumeral3}}_B.
\end{eqnarray*}
We can similarly define ${\uppercase\expandafter{\romannumeral3}}_{Ai}$ and ${\uppercase\expandafter{\romannumeral3}}_{Bi},\ i=1,\cdots,4$ as we did for $\uppercase\expandafter{\romannumeral2}$.

For ${\uppercase\expandafter{\romannumeral3}}_A$, we note that
\begin{flalign*}
&\frac{1}{\sqrt{{(T_0-\tau)}^2 -{|x_0-y|}^2}}
-  \frac{1}{\sqrt{{(T_0+|x_0|-\tau)}^2-{|y|}^2}}& \\
&=\frac{\left({(T_0+|x_0|-\tau)}^2-{|y|}^2\right)-\left({(T_0-\tau)}^2-{|x_0-y|}^2\right)}{\sqrt{{(T_0-\tau)}^2-{|x_0-y|}^2} \sqrt{{(T_0+|x_0|-\tau)}^2-{|y|}^2}\left(\sqrt{{(T_0-\tau)}^2-{|x_0-y|}^2}+\sqrt{{(T_0+|x_0|-\tau)}^2-{|y|}^2}\right)}& \\
&\leq \frac{(T_0+|x_0|-\tau+|y|)(T_0+|x_0|-\tau-|y|)-(T_0-\tau+|x_0-y|)(T_0-\tau-|x_0-y|)}{\left( {(T_0+|x_0|-\tau)}^2-{|y|}^2 \right) \sqrt{{(T_0-\tau)}^2-{|x_0-y|}^2}} & \\
&=\frac{\left[ T_0+|x_0|-\tau+|y|-(T_0-\tau+|x_0-y|) \right](T_0+|x_0|-\tau-y)}{{\left( {(T_0+|x_0|-\tau)}^2-{|y|}^2 \right) \sqrt{{(T_0-\tau)}^2-{|x_0-y|}^2}} }& \\
&+\frac{(T_0-\tau+|x_0-y|)\left[T_0+|x_0|-\tau-|y|-(T_0-\tau-|x_0-y|)\right]}{{\left( {(T_0+|x_0|-\tau)}^2-{|y|}^2 \right) \sqrt{{(T_0-\tau)}^2-{|x_0-y|}^2}} }&\\
&=\frac{(|x_0|+|y|-|x_0-y|)(T_0+|x_0|-\tau-|y|)+(T_0-\tau+|x_0-y|)(|x_0-y|+|x_0|-|y|)}{{\left( {(T_0+|x_0|-\tau)}^2-{|y|}^2 \right) \sqrt{{(T_0-\tau)}^2-{|x_0-y|}^2}}} & \\
& \lesssim \frac{|x_0|(T_0-\tau+|x_0-y|)}{{\left( {(T_0+|x_0|-\tau)}^2-{|y|}^2 \right) \sqrt{{(T_0-\tau)}^2-{|x_0-y|}^2}}} & \\
& \lesssim \frac{|x_0|\sqrt{T_0-\tau+|x_0-y|}}{\left( {(T_0+|x_0|-\tau)}^2-{|y|}^2 \right) \sqrt{T_0-\tau-|x_0-y|}}& \\
& \lesssim \frac{|x_0|}{\sqrt{T_0+|x_0|-\tau+|y|} (T_0+|x_0|-\tau-|y|)\sqrt{T_0-\tau-|x_0|-|y|}} &
\end{flalign*}
Thus, using $(\ref{5.15})$ and Lemma \ref{prop5.2},
\begin{eqnarray*}
{\uppercase\expandafter{\romannumeral3}}_{A1} & \lesssim & |x_0| \iint\frac{{\phi}^2 r}{\sqrt{T_0+|x_0|-u}(T_0+|x_0|-v) \sqrt{T_0-|x_0|-v}} du dv \\
& \lesssim &  |x_0| \int_{0}^{T_0-|x_0|} \frac{1}{(T_0+|x_0|-v)\sqrt{T_0-|x_0|-v}} dv {\left( \int_0^{v} \frac{r}{(T_0+|x_0|-u) } dR \right)}^{\frac{1}{2}} \\
& \lesssim &  |x_0| \int_{0}^{T_0-|x_0|} \frac{1}{(T_0+|x_0|-v)\sqrt{T_0-|x_0|-v}} dv {\left( \int_0^{v} \frac{1}{R^{2\delta-1}(T_0+|x_0|-u) } dR \right)}^{\frac{1}{2}} \\
& \lesssim & |x_0| \int_{0}^{T_0-|x_0|} \frac{1}{{(T_0+|x_0|-v)}^{\delta+\frac{1}{2}}\sqrt{T_0-|x_0|-v}} dv \\
& \lesssim & |x_0|^{1-\delta} \lesssim r^{1-\delta}(u_0,v_0)
\end{eqnarray*}
Similarly, we can obtain the same estimates for ${\uppercase\expandafter{\romannumeral3}}_{A2}$,
\begin{eqnarray*}
{\uppercase\expandafter{\romannumeral3}}_{A2} & \lesssim & |x_0| \iint\frac{e^{-2\gamma}{\phi} r}{\sqrt{T_0+|x_0|-u}(T_0+|x_0|-v) \sqrt{T_0-|x_0|-v}} du dv \\
& \lesssim &  |x_0| \int_{0}^{T_0-|x_0|} \frac{1}{(T_0+|x_0|-v)\sqrt{T_0-|x_0|-v}} dv {\left( \int_0^{v} \frac{r}{(T_0+|x_0|-u) } dR \right)}^{\frac{1}{2}} \\
& \lesssim &  |x_0| \int_{0}^{T_0-|x_0|} \frac{1}{(T_0+|x_0|-v)\sqrt{T_0-|x_0|-v}} dv {\left( \int_0^{v} \frac{1}{R^{2\delta-1}(T_0+|x_0|-u) } dR \right)}^{\frac{1}{2}} \\
& \lesssim & |x_0| \int_{0}^{T_0-|x_0|} \frac{1}{{(T_0+|x_0|-v)}^{\delta+\frac{1}{2}}\sqrt{T_0-|x_0|-v}} dv \\
& \lesssim & |x_0|^{1-\delta} \lesssim r^{1-\delta}(u_0,v_0)
\end{eqnarray*}
For ${\uppercase\expandafter{\romannumeral3}}_{A3}$, by Lemma \ref{prop5.2},
\begin{eqnarray*}
{\uppercase\expandafter{\romannumeral3}}_{A3} & \lesssim & \varepsilon X |x_0| \int_{0}^{T_0-|x_0|} \frac{1}{(T_0+|x_0|-v)\sqrt{T_0-|x_0|-v}} dv  \int_{-v}^{v} \frac{1}{r^{\delta} \sqrt{T_0+|x_0|-u} } du \\
& \lesssim & \varepsilon X |x_0| \int_{0}^{T_0-|x_0|} \frac{1}{{(T_0+|x_0|-v)}^{\delta+\frac{1}{2}} \sqrt{T_0-|x_0|-v}} dv \\
& \lesssim & \varepsilon X r^{1-\delta}(u_0,v_0)
\end{eqnarray*}
then, we can get
\begin{eqnarray*}
{\uppercase\expandafter{\romannumeral3}}_{A4} & \lesssim& |x_0| \int_{0}^{T_0-|x_0|} \frac{1}{(T_0+|x_0|-v)\sqrt{T_0-|x_0|-v}} dv \int_{-v}^{v} \frac{r |U_u|}{ \sqrt{T_0+|x_0|-u} } du \\
&\lesssim&   |x_0| \int_{0}^{T_0-|x_0|} \frac{1}{{(T_0+|x_0|-v)}^{\delta+\frac{1}{2}} \sqrt{T_0-|x_0|-v}} dv \\
&\lesssim & r^{1-\delta}(u_0,v_0).
\end{eqnarray*}

 Finally, we estimate ${\uppercase\expandafter{\romannumeral3}}_B$. We have
\begin{eqnarray*}
{\uppercase\expandafter{\romannumeral3}}_B &=&-\frac{1}{2\pi}\iint\nolimits_{|x_0-y| \leq T_0-\tau , |x_0|+|y| > T_0-\tau} \frac{1}{\sqrt{{(T_0-\tau)}^2 -{|x_0-y|}^2}}
 h(\tau,y) dy d \tau \\
 &&+\frac{1}{2\pi}\iint\nolimits_{|x_0-y| \leq T_0-\tau , |x_0|+|y| > T_0-\tau}  \frac{1}{\sqrt{{(T_0+|x_0|-\tau)}^2-{|y|}^2}} h(\tau,y) dy d \tau \\
 & \triangleq & {\uppercase\expandafter{\romannumeral3}}_B^{(1)}+{\uppercase\expandafter{\romannumeral3}}_B^{(2)}.
\end{eqnarray*}
Estimates for ${\uppercase\expandafter{\romannumeral3}}_B^{(2)}$ can be obtained in the same way as we did for $\uppercase\expandafter{\romannumeral2}$.
What left to be estimated is ${\uppercase\expandafter{\romannumeral3}}_B^{(1)}$.

For ${\uppercase\expandafter{\romannumeral3}}_B^{(1)}$, we have
\begin{eqnarray*}
|{\uppercase\expandafter{\romannumeral3}}_B^{(1)}| &\lesssim& \int_{T_0-|x_0|}^{T_0+|x_0|} \int_{-v}^{T_0-|x_0|} |h(u,v)|r du dv  \\
&&\int_{\sin^2{\frac{\varphi}{2}} \leq \frac{{(T_0-\tau)}^2-{(|x_0|-|y|)}^2}{4|x_0||y|}} \frac{1}{\sqrt{{(T_0-\tau)}^2 -{(|x_0|-|y|)}^2-4|x_0||y|\sin^2{\frac{\varphi}{2}}}} d \varphi
\end{eqnarray*}
Let $a=4|x_0||y|$, $b={(T_0-\tau)}^2 -{(|x_0|-|y|)}^2$, $z=\sin{\frac{\varphi}{2}}$, then we have
\begin{displaymath}
\left|\int_{\sin^2{\frac{\varphi}{2}} \leq \frac{{(T_0-\tau)}^2-{(|x_0|-|y|)}^2}{4|x_0||y|}} \frac{1}{\sqrt{{(T_0-\tau)}^2 -{(|x_0|-|y|)}^2-4|x_0||y|\sin^2{\frac{\varphi}{2}}}} d \varphi\right| = \left| \int_{0}^{\sqrt{\frac{b}{a}}} \frac{1}{\sqrt{b-az^2}} \frac{1}{\sqrt{1-z^2}} dz\right|
\end{displaymath}
\begin{eqnarray*}
\left| \int_{0}^{\sqrt{\frac{b}{a}}} \frac{1}{\sqrt{b-az^2}} \frac{1}{\sqrt{1-z^2}} dz\right| & \lesssim& \frac{1}{b^{\frac{1}{4}}} \int_{0}^{\sqrt{\frac{b}{a}}} \frac{1}{\sqrt{\sqrt{b}-\sqrt{a}z}} \frac{1}{\sqrt{1-z}} dz \\
& \lesssim & \frac{1}{a^{\frac{1}{4}}b^{\frac{1}{4}}} \int_{0}^{\sqrt{\frac{b}{a}}} \frac{1}{\sqrt{\sqrt{\frac{b}{a}}-z}} \frac{1}{\sqrt{1-\sqrt{\frac{b}{a}}+\sqrt{\frac{b}{a}}-z}} dz \\
& \lesssim & \frac{1}{a^{\frac{1}{4}}b^{\frac{1}{4}}} \int_{0}^{\sqrt{\frac{b}{a}}} \frac{1}{\sqrt{x}} \frac{1}{\sqrt{1-\sqrt{\frac{b}{a}}+x}} dx \\
& \lesssim & \frac{1}{a^{\frac{1}{4}}b^{\frac{1}{4}}{\left(1-\sqrt{\frac{b}{a}} \right)}^{\frac{1}{4}}} \int_{0}^{\sqrt{\frac{b}{a}}} \frac{1}{{x}^{\frac{3}{4}}} dx \\
& \lesssim & \frac{{\left( \sqrt{a}+\sqrt{b} \right)}^{\frac{1}{4}}}{b^{\frac{1}{8}} {(a-b)}^{\frac{1}{4}}} \\
& \lesssim & \frac{1}{b^{\frac{1}{8}} {(a-b)}^{\frac{1}{4}}}.
\end{eqnarray*}
Therefore, using $(\ref{5.15})$ , we have
\begin{eqnarray*}
{\uppercase\expandafter{\romannumeral3}}_{B1}^{(1)} & \lesssim & \int_{T_0-|x_0|}^{T_0+|x_0|} \int_{-v}^{T_0-|x_0|} \frac{{\phi}^2 r}{{(v+|x_0|-T_0)}^{\frac{1}{4}} {(T_0+|x_0|-u)}^{\frac{1}{4}} {(T_0-|x_0|-u)}^{\frac{1}{8}}{(T_0+|x_0|-v)}^{\frac{1}{8}}} du dv \\
& \lesssim & \int_{T_0-|x_0|}^{T_0+|x_0|} \frac{1}{{(v+|x_0|-T_0)}^{\frac{1}{4}} {(T_0+|x_0|-v)}^{\frac{1}{8}}}  dv
{\left( \int_{-v}^{T_0-|x_0|} \frac{r}{{(T_0+|x_0|-u)}^{\frac{1}{2}} {(T_0-|x_0|-u)}^{\frac{1}{4}}} du \right)}^{\frac{1}{2}} \\
& \lesssim & \int_{T_0-|x_0|}^{T_0+|x_0|} \frac{1}{{(v+|x_0|-T_0)}^{\frac{1}{4}} {(T_0+|x_0|-v)}^{\frac{1}{8}}}  dv
{\left( \int_{-v}^{T_0-|x_0|} \frac{1}{r^{2\delta-1}{(T_0+|x_0|-u)}^{\frac{1}{2}} {(T_0-|x_0|-u)}^{\frac{1}{4}}} du \right)}^{\frac{1}{2}} \\
& \lesssim &  \int_{T_0-|x_0|}^{T_0+|x_0|} \frac{1}{{(v+|x_0|-T_0)}^{\frac{1}{4}}{(T_0+|x_0|-v)}^{\frac{1}{8}}} dv \\
 &&{\left[ {\left( \int_{-v}^v \frac{1}{r^{4\delta-2} (T_0+|x_0|-u)} du \right)}^{\frac{1}{2}} {\left( \int_{-v}^{T_0-|x_0|} \frac{1}{\sqrt{T_0-|x_0|-u}} du \right)}^{\frac{1}{2}} \right]}^{\frac{1}{2}}.
\end{eqnarray*}
Then, by Lemma \ref{prop5.2},
\begin{eqnarray*}
{\uppercase\expandafter{\romannumeral3}}_{B1}^{(1)} & \lesssim &  \int_{T_0-|x_0|}^{T_0+|x_0|} \frac{1}{{(v+|x_0|-T_0)}^{\frac{1}{4}}{(T_0+|x_0|-v)}^{\frac{1}{8}}} {\left( \frac{{(T_0-|x_0|+v)}^{\frac{1}{4}}}{{(T_0+|x_0|-v)}^{2\delta-1}} \right)}^{\frac{1}{2}}dv  \\
& \lesssim &  \int_{T_0-|x_0|}^{T_0+|x_0|} \frac{{(T_0-|x_0|+v)}^{\frac{1}{8}}}{{(v+|x_0|-T_0)}^{\frac{1}{4}}{(T_0+|x_0|-v)}^{\delta-\frac{3}{8}}} dv \\
& \lesssim &  {\left( \int_{T_0-|x_0|}^{T_0+|x_0|} \frac{{(T_0-|x_0|+v)}^{\frac{1}{4}} {(T_0+|x_0|-v)}^{\frac{1}{4}}}{{(v+|x_0|-T_0)}^{\frac{1}{2}}} dv \right)}^{\frac{1}{2}} {\left( \int_{T_0-|x_0|}^{T_0+|x_0|} \frac{1}{{(T_0+|x_0|-v)}^{2\delta-\frac{1}{2}}} dv \right)}^{\frac{1}{2}} \\
& \lesssim &  {|x_0|}^{\frac{1}{8}} {\left( \int_{T_0-|x_0|}^{T_0+|x_0|} \frac{1}{{(v+|x_0|-T_0)}^{\frac{1}{2}}} dv \right)}^{\frac{1}{2}} {\left( \int_{T_0-|x_0|}^{T_0+|x_0|} \frac{1}{{(T_0+|x_0|-v)}^{2\delta-\frac{1}{2}}} dv \right)}^{\frac{1}{2}}.
\end{eqnarray*}
Thus,
\begin{equation*}
{\uppercase\expandafter{\romannumeral3}}_{B1}^{(1)}   \lesssim   {|x_0|}^{\frac{3}{8}} {|x_0|}^{\frac{3}{4}-\delta}
 \lesssim  r^{\frac{9}{8}-\delta}(u_0,v_0)
 \lesssim   r^{1-\delta}(u_0,v_0).
\end{equation*}
We can obtain estimates for ${\uppercase\expandafter{\romannumeral3}}_{B2}^{(1)} $ in a similar way.

Then, we have
\begin{eqnarray*}
{\uppercase\expandafter{\romannumeral3}}_{B3}^{(1)} &\lesssim&  X \int_{T_0-|x_0|}^{T_0+|x_0|}  \frac{1}{{(v+|x_0|-T_0)}^{\frac{1}{4}} {(T_0+|x_0|-v)}^{\frac{1}{8}}}dv \int_{-v}^{T_0-|x_0|} \frac{r^{1-\delta}}{ {(T_0+|x_0|-u)}^{\frac{1}{4}} {(T_0-|x_0|-u)}^{\frac{1}{8}}} du\\
\end{eqnarray*}
noting that
\begin{eqnarray*}
\int_{-v}^{T_0-|x_0|} \frac{r^{1-\delta}}{ {(T_0+|x_0|-u)}^{\frac{1}{4}} {(T_0-|x_0|-u)}^{\frac{1}{8}}} du & \lesssim & {\left( \int_{-v}^{T_0-|x_0|} \frac{1}{  r^{2\delta-1}{(T_0+|x_0|-u)}^{\frac{1}{2}}{(T_0-|x_0|-u)}^{\frac{1}{4}} } du \right)}^{\frac{1}{2}}\\
&&{\left( \int_{-v}^{T_0-|x_0|} R du \right)}^{\frac{1}{2}}\\
&\lesssim & {\left( \int_{-v}^{T_0-|x_0|} \frac{1}{  r^{2\delta-1}{(T_0+|x_0|-u)}^{\frac{1}{2}}{(T_0-|x_0|-u)}^{\frac{1}{4}} } du \right)}^{\frac{1}{2}} \\
&&(v-T_0+|x_0|)\\
& \lesssim &\varepsilon{\left( \int_{-v}^{T_0-|x_0|} \frac{1}{  r^{2\delta-1}{(T_0+|x_0|-u)}^{\frac{1}{2}}{(T_0-|x_0|-u)}^{\frac{1}{4}} } du \right)}^{\frac{1}{2}},
\end{eqnarray*}
thus, we can get
\begin{equation*}
{\uppercase\expandafter{\romannumeral3}}_{B3}^{(1)} \lesssim \varepsilon X r^{1-\delta}(u_0,v_0).
\end{equation*}

Finally, we have
\begin{eqnarray*}
{\uppercase\expandafter{\romannumeral3}}_{B4}^{(1)} & \lesssim&  \int_{T_0-|x_0|}^{T_0+|x_0|} \int_{-v}^{T_0-|x_0|} \frac{{|U_u|} r}{{(v+|x_0|-T_0)}^{\frac{1}{4}} {(T_0+|x_0|-u)}^{\frac{1}{4}} {(T_0-|x_0|-u)}^{\frac{1}{8}}{(T_0+|x_0|-v)}^{\frac{1}{8}}} du dv\\
&\lesssim &\varepsilon \int_{T_0-|x_0|}^{T_0+|x_0|}  \frac{1}{{(v+|x_0|-T_0)}^{\frac{1}{4}} {(T_0+|x_0|-v)}^{\frac{1}{8}}}  dv \\
&&{\left( \int_{-v}^{T_0-|x_0|} \frac{ r} {{(T_0+|x_0|-u)}^{\frac{1}{2}} {(T_0-|x_0|-u)}^{\frac{1}{4}}} du \right)}^{\frac{1}{2}} \\
& \lesssim & \int_{T_0-|x_0|}^{T_0+|x_0|} \frac{1}{{(v+|x_0|-T_0)}^{\frac{1}{4}} {(T_0+|x_0|-v)}^{\frac{1}{8}}}  dv\\
&&{\left( \int_{-v}^{T_0-|x_0|} \frac{1}{r^{2\delta-1}{(T_0+|x_0|-u)}^{\frac{1}{2}} {(T_0-|x_0|-u)}^{\frac{1}{4}}} du \right)}^{\frac{1}{2}} \\
&\lesssim& r^{1-\delta}(u_0,v_0).
\end{eqnarray*}
This finishes the proof of Lemma \ref{lem5.3}.
\end{proof}

With Lemma \ref{lem5.3}, we can estimate the quantity $X$ now.

Multiplying $(\ref{1.18})$ by $r^{\frac{1}{2}}$, we get
\begin{displaymath}
-4\partial_u \left( r^{\frac{1}{2}} U_v \right)=r^{\frac{1}{2}}\mathcal{H}+2\frac{r_v}{r^{\frac{1}{2}}} U_u,
\end{displaymath}
integrating the above equation with respect to $u$, we obtain
\begin{eqnarray*}
 -4r^{\frac{1}{2}}(v_0-|x_0|,|x_0|) U_v (v_0-|x_0|,|x_0|)&=&-4r^{\frac{1}{2}}(0,2v_0) U_v(0,2v_0) \\
 &&+ \int_{-v_0}^{v_0-2|x_0|} r^{\frac{1}{2}}e^{2\lambda} \mathcal{H} du + 2\int_{-v_0}^{v_0-2|x_0|} \frac{r_v}{r^{\frac{1}{2}}} U_u du \\
& \triangleq & \mathcal{A}_1+\mathcal{A}_{\mathcal{H}}+\mathcal{A}_3.
\end{eqnarray*}

By the regularity of the initial data,
\begin{equation*}
|\mathcal{A}_1| \leq A{r(u_0,v_0)}^{\frac{1}{2}-\delta}.
\end{equation*}

For $\mathcal{A}_3$, we have
\begin{eqnarray*}
\mathcal{A}_3 &=&  \int_{-v_0}^{v_0-2|x_0|} \frac{r_v}{r^{\frac{1}{2}}} {\left(U(u,v_0)-U(v_0,v_0)\right)}_u du         \\
&=&  \frac{r_v}{r^{\frac{1}{2}}} {\left(U(v_0-|x_0|,v_0)-U(v_0,v_0)\right)}-\frac{r_v}{r^{\frac{1}{2}}} {\left(U(-v_0,v_0)-U(v_0,v_0)\right)} \\
&&+ \frac{1}{2}\int_{-v_0}^{v_0-2|x_0|} \frac{r_v r_u}{r^{\frac{3}{2}}} {\left(U(u,v_0)-U(v_0,v_0)\right)}du\\
&&-\int_{-v_0}^{v_0-2|x_0|} \frac{r_{uv}}{r^{\frac{1}{2}}} {\left(U(u,v_0)-U(v_0,v_0)\right)} du    \\
&=& \mathcal{A}_3^{(1)}+\mathcal{A}_3^{(2)}+\mathcal{A}_3^{(3)}+\mathcal{A}_3^{(4)}.
\end{eqnarray*}

For $\mathcal{A}_3^{(1)}$ and $\mathcal{A}_3^{(2)}$, by Lemma \ref{lem5.3}, we have the following estimate,
\begin{displaymath}
|\mathcal{A}_3^{(1)}| \lesssim {|x_0|}^{\frac{1}{2}-\delta} |A+\varepsilon X|,
\end{displaymath}
and
\begin{displaymath}
|\mathcal{A}_3^{(2)}| \lesssim {(v_0)}^{\frac{1}{2}-\delta}|A+\varepsilon X| \lesssim   {|x_0|}^{\frac{1}{2}-\delta}|A+\varepsilon X|.
\end{displaymath}
Similarly for $\mathcal{A}_3^{(3)}$, we obtain
\begin{eqnarray*}
|\mathcal{A}_3^{(3)}| &\lesssim& \left( A+\varepsilon X \right) \int_{-v_0}^{v_0-2|x_0|} \frac{{(v-u)}^{1-\delta}}{{(v-u)}^{\frac{3}{2}}} du \\
& \lesssim & {|x_0|}^{\frac{1}{2}-\delta} \left( A+\varepsilon X \right).
\end{eqnarray*}

For $\mathcal{A}_3^{(4)}$, using \eqref{1.13} and \eqref{5.15}, we have
\begin{eqnarray*}
|\mathcal{A}_3^{(4)}| & \lesssim &  \int_{-v_0}^{v_0-2|x_0|} \phi^2 r^{\frac{1}{2}} {\left(U(u,v_0)-U(v_0,v_0)\right)} du  \\
& \lesssim & (A+\varepsilon X) \int_{-v_0}^{v_0-2|x_0|} \phi^2 r^{\frac{3}{2}-\delta} du \\
& \lesssim & (A+\varepsilon X) {r(u_0,v_0)}^{\frac{1}{2}-\delta},
\end{eqnarray*}
thus, we obtain
\begin{displaymath}
|\mathcal{A}_3| \lesssim (A+\varepsilon X) {r(u_0,v_0)}^{\frac{1}{2}-\delta}.
\end{displaymath}

For $\mathcal{A}_{\mathcal{H}_1}$ and $\mathcal{A}_{\mathcal{H}_2}$, using $(\ref{5.15})$, we have
\begin{eqnarray*}
|\mathcal{A}_{\mathcal{H}_1}| & \leq & C {\left( \int_{-v_0}^{u_0} R{\phi}^{4} du \right)}^{\frac{1}{2}} \\
 & \leq & C {|x_0|}^{\frac{1}{2}-\delta} {\left( \int_{-v_0}^{u_0} {\phi}^{4} R du \right)}^{\frac{1}{2}}  \\
 &=& C {|x_0|}^{\frac{1}{2}-\delta} {\left( \int_{|x_0|}^{v_0} {\phi}^{4} R dR \right)}^{\frac{1}{2}} \\
 & \lesssim & {|x_0|}^{\frac{1}{2}-\delta} \lesssim  r^{\frac{1}{2}-\delta}(u_0,v_0).
\end{eqnarray*}
We can get estimates for $\mathcal{A}_{\mathcal{H}_2}$ in a same way.

Therefore,
\begin{displaymath}
{r}^{\frac{1}{2}}(u_0,v_0) \left| U_v(u_0,v_0) \right| \lesssim  (A+\varepsilon X) {r}^{\frac{1}{2}-\delta}(u_0,v_0),
\end{displaymath}
then,
\begin{displaymath}
{r}^{\delta}(u_0,v_0) \left| U_v(u_0,v_0) \right| \lesssim  (A+\varepsilon X),
\end{displaymath}
thus, we obtain
\begin{displaymath}
X \lesssim  (A+\varepsilon X),
\end{displaymath}
which implies
\begin{equation} \label{5.17}
X \leq A.
\end{equation}

Now we give a uniform upper bound of $U$.
\begin{lemma} \label{lem5.4}
There exists a constant $A>0$ such that
\begin{equation} \label{5.18}
|U| \leq A
\end{equation}
holds in $K_0$.
\end{lemma}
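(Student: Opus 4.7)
My plan is to chain two applications of Lemma \ref{lem5.3} with the pointwise boundedness of the smooth initial datum. Two simplifications reduce the statement of Lemma \ref{lem5.3} to a clean form: since $r(T_0, |x_0|) = |x_0|$ is uniformly bounded by $T^* - \bar{T}$ throughout $K_0$, we have $r^{1-\delta}(T_0, |x_0|) \lesssim 1$; and by \eqref{5.17}, $A + \varepsilon X$ is bounded by a constant depending only on the initial data. Combining these, Lemma \ref{lem5.3} gives
\[
|U(T_0 + |x_0|, 0) - U(T_0, x_0)| \leq C
\]
uniformly for $(T_0, x_0) \in K_0$, where $C$ depends only on the initial data.

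The first step is to control $U$ along the axis $\{R = 0\}$. For each $T_\mathrm{axis} \in [0, T^* - \bar{T})$, the point $(0, T_\mathrm{axis})$ lies in $K_0$ (on the initial slice), and its future-directed incoming null ray meets the axis precisely at $(T_\mathrm{axis}, 0)$. Applying the reduced inequality above with $(T_0, x_0) = (0, T_\mathrm{axis})$ yields
\[
|U(T_\mathrm{axis}, 0) - U(0, T_\mathrm{axis})| \leq C,
\]
and since the initial datum $U(0, \cdot)$ is smooth and hence bounded on the bounded interval $[0, T^* - \bar{T}]$, one obtains a uniform axis bound $|U(T_\mathrm{axis}, 0)| \leq A_0$ with $A_0$ independent of $T_\mathrm{axis}$.

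The second step propagates the axis bound to arbitrary points of $K_0$. Given any $(T_0, x_0) \in K_0$, the defining inequality $|x_0| < T^* - \bar{T} - T_0$ forces $T_0 + |x_0| < T^* - \bar{T}$, so the axis bound from step one applies at time $T_0 + |x_0|$. A second application of Lemma \ref{lem5.3} and the triangle inequality then give
\[
|U(T_0, x_0)| \leq |U(T_0 + |x_0|, 0)| + C \leq A_0 + C,
\]
which is the desired bound \eqref{5.18}. I do not foresee any genuine analytic obstacle here: the hard work has already been done in establishing Lemma \ref{lem5.3} and the bound \eqref{5.17} on $X$, and what remains is essentially two invocations of the triangle inequality along incoming null rays — first from the initial slice to the axis, then from the axis back to an arbitrary interior point.
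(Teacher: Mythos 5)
Your proof is correct and follows essentially the same route as the paper: both arguments bound $U$ on the axis first and then propagate to general points of $K_0$ using Lemma \ref{lem5.3} together with the bound $X\leq A$ from \eqref{5.17}. The only cosmetic difference is that the paper's second leg integrates $|U_v|\lesssim A R^{-\delta}$ along outgoing null rays (with a case split $u\geq 0$ versus $u<0$), whereas you invoke Lemma \ref{lem5.3} a second time along the incoming null ray, which is an equivalent estimate.
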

\begin{proof}
Let $(u,v) \in K_0$, by \eqref{5.17} and Lemma \ref{lem5.3}, we have
\begin{displaymath}
|U(u,v)-U(v,v)| \lesssim A R^{1-\delta},
\end{displaymath}
then if $u \geq -u$, by \eqref{5.17}, we have
\begin{displaymath}
|U_v(u,v)| \lesssim A R^{-\delta},
\end{displaymath}
integrating the above inequality yields
\begin{displaymath}
|U(u,v)-U(u,u)| \lesssim A R^{1-\delta},
\end{displaymath}
adding the above two inequality, we get
\begin{displaymath}
|U(v,v)-U(u,u)| \lesssim A R^{1-\delta}.
\end{displaymath}
Now take $u=0$, then by the regularity of the initial data, we get a uniform bound on $\Gamma$,
\begin{displaymath}
|U(v,v)| \lesssim |U(0,0)|+R^{1-\delta} \leq A.
\end{displaymath}
Then for any $(u,v) \in K_0$, by \eqref{5.17}, we have
\begin{displaymath}
|U(u,v)| \lesssim |U(u,u)|+R^{1-\delta} \leq A\quad if \,\, u\geq -u,
\end{displaymath}
and
\begin{displaymath}
|U(u,v)| \lesssim |U(u,-u)|+R^{1-\delta}  \leq A\quad if \,\, u< -u.
\end{displaymath}
This finishes the proof of Lemma \ref{lem5.4}.
\end{proof}

Now, taking advantage of Lemma \ref{lem5.4}, we give the following lemma.
\begin{lemma} \label{lem5.5}
For any $1\leq s <2$, we have
\begin{displaymath}
\left| \partial_v r-\frac{1}{2} \right| \lesssim \varepsilon R^s,\,\left| \partial_u r+\frac{1}{2} \right| \lesssim \varepsilon R^s,\,|r-R| \lesssim \varepsilon R^{1+s}
\end{displaymath}
in a small cone $K \subset K_0$.
\end{lemma}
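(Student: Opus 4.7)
The plan is to integrate equation \eqref{1.13} outward from the axis $\Gamma=\{u=v\}$, where the gauge normalization $\alpha(t,0)=\beta(t,0)=0$ from Section 1.3 makes the geometry locally flat. First, using the null coordinate construction from \cite{chen} together with these normalizations, one finds $r(u,u)=0$ and $\lambda(u,u)=0$; moreover, from the coordinate change $u=t-\int_0^r e^{\beta-\alpha}dr'$, $v=t+\int_0^r e^{\beta-\alpha}dr'$ the axis values of the null derivatives of $r$ are pinned down to $\partial_v r|_{u=v}=\tfrac12$ and $\partial_u r|_{u=v}=-\tfrac12$ (the latter is also consistent with differentiating $r(u,u)\equiv 0$ along the axis). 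These serve as the boundary data for the argument.

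I then set up a bootstrap on a small cone $K=\{0\le T<\tau,\;0\le R<\tau-T\}\subset K_0$, for $\tau>0$ to be chosen. Under the bootstrap hypothesis $r\le 2R$ throughout $K$, equation \eqref{1.13} combined with Lemma \ref{lem5.4} ($|\phi|\le A$) and the uniform bounds on $\lambda,\gamma$ from Section 3 gives
\begin{equation*}
|r_{uv}(u',v)|=\left|\tfrac{m^2}{4}\, r\, e^{2\lambda-2\gamma}\phi^2\right|\le CR',\qquad R':=\frac{v-u'}{2}.
\end{equation*}
Integrating in $u'$ from the axis ($u'=v$) down to $u'=u$ yields
\begin{equation*}
\left|r_v(u,v)-\tfrac12\right|=\left|\int_v^u r_{uv}(u',v)\,du'\right|\le C\int_u^v R'\,du'\le CR^2.
\end{equation*}
For any fixed $1\le s<2$ and $(u,v)\in K$, the right-hand side is bounded by $C\tau^{2-s}R^s$; choosing $\tau$ small enough so that $C\tau^{2-s}\le\varepsilon$ gives the first estimate. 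The estimate on $r_u+\tfrac12$ follows symmetrically by integrating in $v'$ at fixed $u$, using the analogous boundary value on the axis.

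Finally, since $r(u,u)=0$ on the axis, integrating $r_v-\tfrac12$ in $v'$ at fixed $u$,
\begin{equation*}
r(u,v)-R=\int_u^v\bigl(r_v(u,v')-\tfrac12\bigr)\,dv',
\end{equation*}
and combining with the estimate just obtained gives $|r-R|\lesssim\varepsilon\int_u^v (R')^s\,dv'\lesssim\varepsilon R^{s+1}$. This closes the bootstrap (since $r\le R+\varepsilon R^{s+1}\le 2R$ for $R\le\tau$ small) and produces the third estimate. The main technical point is the rigorous justification of the axis boundary data $r_v|_{u=v}=\tfrac12$ and $\lambda(u,u)=0$, which relies on the detailed null coordinate construction in \cite{chen}; beyond that, everything is a clean integration. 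The restriction $s<2$ (rather than $s=2$) is forced by the bootstrap mechanism itself: the integration produces a flat $R^2$ bound on $r_v-\tfrac12$, and the factor available to absorb the uniform constant $C$ (which is tied to $\|\phi\|_\infty^2\le A^2$, not necessarily small) into the small parameter $\varepsilon$ is $\tau^{2-s}$, which requires $s<2$ in order to be arbitrarily small.
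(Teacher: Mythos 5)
Your proof is correct, but the mechanism producing the factor $\varepsilon R^s$ is genuinely different from the paper's. The paper integrates \eqref{1.13} from the axis exactly as you do, but then estimates $\int_u^v r\phi^2\,du'$ by H\"older with an exponent $p>1$, namely $\int r\phi^2\,du'\lesssim\bigl(\int\phi^{2p}R\,du'\bigr)^{1/p}R^{2-2/p}$, and controls $\|\phi\|_{L^{2p}(R\,du)}$ by the Sobolev embedding on the null cone together with the flux/energy bounds (essentially \eqref{5.15}); there the exponent loss $s=2-\tfrac{2}{p}<2$ is forced by the failure of $H^1\hookrightarrow L^\infty$ in two dimensions, and the $\varepsilon$ is tied to the small energy. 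You instead invoke the pointwise bound $|\phi|\le A$ of Lemma \ref{lem5.4} to get the clean estimate $|r_v-\tfrac12|\le CR^2$, and then convert the surplus power $R^{2-s}\le\tau^{2-s}$ into the required $\varepsilon$ by shrinking the cone. Both routes deliver the stated conclusion on a small cone; yours is more elementary (no H\"older/Sobolev step) and makes transparent why $s=2$ is out of reach. The one caveat is that in your version the cone $K$ depends on $\varepsilon$ and on $s$, and the smallness is manufactured geometrically rather than inherited from the small-energy hypothesis of Theorem \ref{thm5.1}; this is compatible with the statement as written and with the later applications (which take place on cones $K_1$ shrinking to the tip), but it does decouple the $\varepsilon$ in the conclusion from the energy parameter. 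Your axis data $r(u,u)=0$, $r_v=\tfrac12$, $r_u=-\tfrac12$ and the bootstrap $r\le 2R$ are consistent with Lemma \ref{remark3.10} and the normalizations of \cite{chen}, and the bootstrap closes as you claim.
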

\begin{proof}
Still, we integrate \eqref{1.13} from the axis of symmetry $\Gamma$ along the null direction and in view of \cite{chen} and the initialization on $\Gamma$, we deduce
\begin{eqnarray*}
\left|\partial_u r+\frac{1}{2} \right| & \lesssim & {\left(\int_{-v}^v \phi^{2p} R du\right)}^{\frac{1}{p}} R^{2-\frac{2}{p}} \\
& \lesssim & {\left(\int_{0}^{v} {\phi}^{2p}(v-2R,v) R dR\right)}^{\frac{1}{p}} R^{2-\frac{2}{p}}
\end{eqnarray*}
for some $p>1$.

Using the U(1) symmetry, Lemma \ref{lem5.4}, energy estimates on the flux and Sobolev's inequality, we get
\begin{eqnarray*}
{\left( \int_{0}^{v} {\phi}^{2p}(v-2R,v) R dR \right)}^{\frac{1}{2p}} &=& {\left( \frac{1}{2\pi} \int_{0}^{2\pi} \int_{0}^{v} {\phi}^{2p}(v-2R,v) R dR d\theta \right)}^{\frac{1}{2p}} \\ \nonumber
& \lesssim & {\left( \int_{-v}^{v} {\phi_u}^{2} R du \right)}^{\frac{1}{2}}+{\left( \int_{-v}^{v} {\phi}^{2} R du \right)}^{\frac{1}{2}} \\ \nonumber
& \lesssim & \varepsilon.
\end{eqnarray*}

Let $s\triangleq2-\frac{2}{p}$, we have
\begin{displaymath}
|\partial_u r+\frac{1}{2}| \lesssim \varepsilon R^{s}.
\end{displaymath}
Similarly,
\begin{displaymath}
|\partial_v r-\frac{1}{2}| \lesssim \varepsilon R^{s}.
\end{displaymath}
Then, integrating along another null direction yields
\begin{displaymath}
|r-R| \lesssim \varepsilon R^{1+s}.
\end{displaymath}

This concludes the proof of the lemma.
\end{proof}

\subsection{Higher regularity estimates}
Now we differentiate equation \eqref{1.18} with respect to $R$ and denote $V=\partial_R U$. Then we obtain
\begin{equation} \label{5.19}
-V_{TT}+V_{RR}+\frac{V_R}{R}-\frac{V}{R^2}=F,
\end{equation}
where
\begin{eqnarray*}
F&= & \partial_R \mathcal{H}+ \left( \frac{2r_u}{r}+\frac{1}{R} \right) V_v +\left(- \frac{1}{R^2}-\frac{2r_u}{r^2} (r_v-r_u)\right) U_v+\frac{2r_{uR}}{r} U_v \\
&& +\left( \frac{2r_v}{r}-\frac{1}{R} \right) V_u +\left( \frac{1}{R^2}-\frac{2r_v}{r^2} (r_v-r_u)\right) U_u+\frac{2r_{vR}}{r} U_u \\
& \triangleq & F_{\mathcal{H}}+F_7+F_8+F_9+F_{10}+F_{11}+F_{12}.
\end{eqnarray*}
Calculating $\partial_R \mathcal{H}_1$ and $\partial_R \mathcal{H}_2$ respectively, the following six terms would appear,
\begin{displaymath}
F_1 \triangleq  m^2e^{2\lambda-2\gamma}\gamma_R{\phi}^2,
\end{displaymath}
\begin{displaymath}
F_2 \triangleq -m^2e^{2\lambda-2\gamma} {\phi}\phi_R,
\end{displaymath}
\begin{displaymath}
F_3 \triangleq -2m^2e^{2\lambda-2\gamma}\gamma_R \phi,
\end{displaymath}
\begin{displaymath}
F_4 \triangleq  m^2e^{2\lambda-2\gamma} \phi_R,
\end{displaymath}
\begin{displaymath}
F_5 \triangleq -m^2e^{2\lambda-2\gamma} \lambda_R{\phi}^2 ,
\end{displaymath}
\begin{displaymath}
F_6 \triangleq 2m^2e^{2\lambda-2\gamma}\lambda_R\phi.
\end{displaymath}

Let us define
\begin{displaymath}
X^{(0)}=\sup\limits_{(T,|x|) \in K_0} r^{\delta} \left| V_v(T,|x|) \right|.
\end{displaymath}
Our goal is to show the boundness of $X^{(0)}$, it is obviously bounded by a constant $M$ on the part $\{0<T<T_1\}$ by regularity where $0<T^*-T_1<<T^*-\bar{T}$. Thus, we only need to consider on
\begin{displaymath}
K_1=\left\{ (T,R)| 0 \leq R <T^*-T,\ T \geq T_1 \right\}.
\end{displaymath}

Let us introduce
\begin{displaymath}
X^{(1)}=\sup\limits_{(T,|x|) \in K_1} r^{\delta} \left| V_v(T,|x|) \right|,
\end{displaymath}
\begin{displaymath}
Y^{(0)}=\sup\limits_{(T,|x|) \in K_0} r^{\delta-1} \left| V(T,|x|) \right|,
\end{displaymath}
\begin{displaymath}
Y^{(1)}=\sup\limits_{(T,|x|) \in K_1} r^{\delta-1} \left| V(T,|x|) \right|,
\end{displaymath}
\begin{displaymath}
Z^{(0)}=\sup\limits_{(T,|x|) \in K_0}{\left( \int_{-v}^{T-|x|} {V_u}^2r du \right)}^{\frac{1}{2}},
\end{displaymath}
\begin{displaymath}
Z^{(1)}=\sup\limits_{(T,|x|) \in K_1}{\left( \int_{2T_1-v}^{T-|x|} {V_u}^2r du \right)}^{\frac{1}{2}}.
\end{displaymath}
and
\begin{displaymath}
P\triangleq  r_{uR} ,\quad Q\triangleq  r_{vR},\quad W\triangleq  \lambda_{R},
\end{displaymath}
\begin{displaymath}
L^{(0)} \triangleq \sup\limits_{(T,|x|) \in K_0} |r^{\delta-1}W(T,|x|)|, \quad L^{(1)} \triangleq \sup\limits_{(T,|x|) \in K_1} |r^{\delta-1}W(T,|x|)|.
\end{displaymath}

Obviously, there holds
\begin{equation} \label{5.20}
X^{(0)} \leq X^{(1)}+M
\end{equation}
where $M$ is a constant depending on $K_1$.

Noting that $V(T,0)=0$, we aim to get a similar estimate as we did in Lemma \ref{lem5.3}. Firstly, we have
\begin{eqnarray*}
|V| & \lesssim & X^{(0)} \int_u^v {r'}^{-\delta} dv' \\
& \lesssim & X^{(0)} r^{1-\delta},
\end{eqnarray*}
therefore
\begin{equation} \label{5.21}
Y^{(0)} \lesssim X^{(0)}.
\end{equation}

Then, for $Z$, we have
\begin{equation} \label{5.22}
Z^{(0)} \leq Z^{(1)}+M.
\end{equation}

Before estimating $L^{(0)}$ and $L^{(1)}$, we give the following lemma.
\begin{lemma} \label{remark3.10}
The following normalizations hold on $\Gamma$,
\begin{displaymath}
R=0:\,\partial \lambda=0,\, \partial^2 r=0
\end{displaymath}
\end{lemma}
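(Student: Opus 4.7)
The plan is to combine two ingredients: the coordinate normalizations already fixed in Part~I (namely $\alpha(t,0)=0$, $\beta(t,0)=0$, together with the null boundary conditions \eqref{4.6.1}), and the evenness/oddness in $R$ forced by smoothness of the metric across the axis.

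\textbf{Step 1: reduction to $\lambda(T,0)\equiv 0$.} On $\Gamma$ we have $u=v=t$ by \eqref{4.6.1}, hence $T=t$ and $R=0$. Restricting $\check{g}$ to $\Gamma$ in $(t,r)$ coordinates gives $-e^{2\alpha(t,0)}dt^2=-dt^2$ by the chosen normalization, while the orbit-space form $\check{g}=-e^{2\lambda}(dT^2-dR^2)$ gives $-e^{2\lambda(T,0)}dT^2$. Matching these on $\Gamma$ yields $\lambda(T,0)\equiv 0$, so differentiating along $\Gamma$ gives $\partial_T\lambda|_\Gamma=0$ at once.

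\textbf{Step 2: symmetry across the axis.} Because the $U(1)$ orbits collapse smoothly at $\Gamma$, any smooth orbit-space quantity pulled back to a chart in which $R$ is a transverse coordinate extends as an even function under $R\mapsto -R$, whereas $r$ (vanishing on $\Gamma$ and measuring orbit length) extends as an odd function. Evenness of $\lambda$ yields $\partial_R\lambda|_\Gamma=0$, which together with Step~1 gives $\partial\lambda=0$ on $\Gamma$. Oddness of $r$ yields $\partial_R^2 r|_\Gamma=0$. The smoothness of this extension is justified by the fact that $R$ is a genuine transverse coordinate near $\Gamma$: by Lemma~\ref{lem5.5}, $\partial_v r\to\tfrac{1}{2}$ and $\partial_u r\to-\tfrac{1}{2}$, so $r_R|_\Gamma = 1>0$.

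\textbf{Step 3: the remaining second derivatives of $r$.} Since $r(T,0)\equiv 0$, differentiating along $\Gamma$ gives $\partial_T r|_\Gamma=\partial_T^2 r|_\Gamma=0$. From Lemma~\ref{lem5.5} we have $\partial_R r|_\Gamma=\partial_v r-\partial_u r=1$ identically along the axis, so differentiating this constant in $T$ yields $\partial_T\partial_R r|_\Gamma=0$. Combined with $\partial_R^2 r|_\Gamma=0$ from Step~2 and $\partial_T^2 r|_\Gamma=0$, every second-order partial derivative of $r$ vanishes on $\Gamma$, which is the claim.

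The only delicate point is Step~2: one must invoke regularity of $(M,g)$ across $\Gamma$ rigorously to justify the even/odd extension. This is standard for smooth axisymmetric solutions, but it implicitly relies on $R$ being a smooth radial coordinate near the axis, which is precisely what Lemma~\ref{lem5.5} supplies via $r_R|_\Gamma=1$. Everything else is algebraic bookkeeping with the coordinate definitions.
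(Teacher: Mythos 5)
Your argument is correct, and for the first half ($\partial\lambda=0$) it coincides with the paper's: both rest on $\lambda\equiv 0$ along $\Gamma$ (which you rederive from the normalization $\alpha(t,0)=0$ and the matching of $\check g$ on the axis, where the paper simply cites Part I) together with evenness of $\lambda$ in $R$ to kill $\lambda_R$. Where you genuinely diverge is the claim $\partial^2 r=0$. The paper reads this off from the field equations: \eqref{1.13} gives $r_{uv}=\frac{m^2}{4}re^{2\lambda-2\gamma}\phi^2=0$ on $\Gamma$ since $r=0$ there, and expanding \eqref{1.12} gives $r_{uu}=2\lambda_u r_u-r(2\gamma_u^2+\phi_u^2)$, which vanishes on $\Gamma$ because $r=0$ and $\lambda_u=\frac12(\lambda_T-\lambda_R)=0$ by the first part; \eqref{1.14} handles $r_{vv}$, and all $(T,R)$ second partials follow by linear combination. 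You instead obtain $r_{TT}=0$ and $r_{TR}=0$ from the axis values $r(T,0)\equiv 0$ and $r_R(T,0)\equiv 1$, and $r_{RR}=0$ from oddness of $r$ in $R$. Your route is coordinate-geometric and never touches the constraint equations for $r$; the paper's is purely algebraic in the equations and never needs a parity statement about $r$. The one place where your version is weaker is precisely the oddness of $r$: vanishing on $\Gamma$ does not by itself force oddness (e.g.\ $R^2$ vanishes there and is even), and what is really needed is that $r^2/R^2$ extends to a smooth rotationally invariant function of the Cartesian coordinates near the axis, i.e.\ regularity of the metric at $\Gamma$ with no conical singularity; Lemma \ref{lem5.5} gives $r_R|_\Gamma=1$ but does not by itself certify this smooth even/odd structure. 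Since you flag this yourself and the conclusion is standard for smooth axisymmetric solutions, I would not call it a gap, but note that the paper's equation-based derivation of $r_{uu}=r_{uv}=r_{vv}=0$ avoids the issue entirely and only invokes "symmetry" for the scalar $\lambda$, where evenness is unproblematic.
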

\begin{proof}
By \cite{chen}, we know that $\lambda=0$ on $\Gamma$, thus,
\begin{displaymath}
\lambda_T=0,
\end{displaymath}
and it is obvious that $\lambda_R=0$ by symmetry, so we have $\partial \lambda=0$ on $\Gamma$.

By \eqref{1.13}, there holds $r_{uv}=0$ on $\Gamma$. Then noting that $\partial \lambda=0$ and using \eqref{1.12}, we have
\begin{eqnarray*}
r_{uu}&=&-r(2{\gamma_u}^2+{\phi_u}^2)+2\lambda_u \cdot r_u \\
&=& 2\lambda_u \cdot r_u \\
&=& 0.
\end{eqnarray*}
Similarly, we can get $$r_{vv}=0.$$
\end{proof}
Thus, using Lemma \ref{remark3.10}, we have
\begin{displaymath}
\left| \frac{\lambda_R(T_0,R_0)}{R_0} \right| \leq M, \quad \forall (T_0,R_0) \in K_0 \setminus K_1,
\end{displaymath}
which implies
\begin{displaymath}
L^{(0)} \leq L^{(1)}+M.
\end{displaymath}

Then we estimate $W$.  By differentiating \eqref{1.15} with respect to $R$, we have
\begin{eqnarray*}
\partial_u \partial_v W&=&-\gamma_{uR}\cdot \gamma_v-\gamma_u \cdot \gamma_{vR}-\frac{1}{2} \phi_{uR}\cdot \phi_v-\frac{1}{2}\phi_u \cdot \phi_{vR}\\
&&+\frac{m^2}{4} e^{2\lambda-2\gamma} (W-V) \cdot \phi^2+\frac{m^2}{4} e^{2\lambda-2\gamma} \phi \cdot V,
\end{eqnarray*}
integrating the above equality, we obtain
\begin{eqnarray} \label{5.23}
 \nonumber |W_v(u,v)| & \lesssim & |\partial_v W(-v,v)|+\int_{-v}^u |V_u|\cdot |U_v| d u'+\int_{-v}^u |V_v|\cdot |U_u| d u' \\
&&+\int_{-v}^u |W|\cdot \phi^2 d u'+\int_{-v}^u |V|\cdot \phi^2 d u'+\int_{-v}^u |V|\cdot \phi d u' \\ \nonumber
&\lesssim & A+Z^{(0)}{\left( \int_{-v}^u R^{-1-2\delta} du' \right)}^{\frac{1}{2}}+\varepsilon X^{(0)}{\left( \int_{-v}^u R^{-1-2\delta} du' \right)}^{\frac{1}{2}}\\ \nonumber
&&+(Y^{(0)}+L^{(0)}) \int_{-v}^u  \phi^2 R^{1-\delta}d u'+Y^{(0)}\int_{-v}^u  \phi R^{1-\delta}d u' \\ \nonumber
& \lesssim & R^{-\delta}(A+X^{(0)}+Y^{(0)}+Z^{(0)}+\varepsilon L^{(0)})+\varepsilon Y^{(0)}{\left(\int_{-v}^u   R^{1-2\delta}d u'\right)}^{\frac{1}{2}} \\ \nonumber
& \lesssim & R^{-\delta}(A+X^{(0)}+Y^{(0)}+Z^{(0)}+\varepsilon L^{(0)}) +\varepsilon R^{-\delta}Y^{(0)}{\left(\int_{-v}^u   Rd u'\right)}^{\frac{1}{2}} \\ \nonumber
& \lesssim & R^{-\delta}(A+X^{(0)}+Y^{(0)}+Z^{(0)}+\varepsilon L^{(0)})
\end{eqnarray}
for any $(u,v) \in K_0$.

Then integrating \eqref{5.23} with respect to $v$, for any $(u,v) \in K_1$, we have
\begin{eqnarray*}
|W| &\lesssim& R^{1-\delta}(A+X^{(0)}+Y^{(0)}+Z^{(0)}+\varepsilon L^{(0)}) \\
&\lesssim& R^{1-\delta}(M+X^{(0)}+Y^{(0)}+Z^{(0)}+\varepsilon L^{(1)}),
\end{eqnarray*}
which implies
\begin{equation} \label{5.24}
L^{(1)} \lesssim M+X^{(0)}+Y^{(0)}+Z^{(0)}.
\end{equation}

Now we differentiate \eqref{1.13} with respect to $R$, we have
\begin{displaymath}
P_v =Q_u= \frac{m^2}{4} r_R e^{2\lambda-2\gamma} \phi^2+\frac{m^2}{2} r e^{2\lambda-2\gamma}(W-V) \phi^2+\frac{m^2}{2}r e^{2\lambda-2\gamma} \phi \cdot V,
\end{displaymath}
integrating the above equality with respect to $u$, using \eqref{5.24}, Lemma \ref{remark3.10} and Lemma \ref{lem5.4}, we can get on $K_1$ that
\begin{eqnarray*}
|Q(u,v)| & \lesssim & \int_{u}^v |r_R| \phi^2 du'+\int_{u}^v |W-V| \phi^2 r du'+Y^{(1)}\int_{u}^v |\phi| r^{2-\delta} du' \\
& \lesssim & R+(M+X^{(0)}+Y^{(0)}+Z^{(0)})\int_{u}^v \phi^2 r^{2-\delta} du'+Y^{(1)}\int_{u}^v \phi r^{2-\delta} du'\\
& \lesssim & eR^{\frac{3}{2}-\delta}+\varepsilon (M+X^{(0)}+Y^{(0)}+Z^{(0)}){\left(\int_{u}^v R^{3-2\delta} du'\right)}^{\frac{1}{2}}\\
& \lesssim & e(M+X^{(0)}+Y^{(0)}+Z^{(0)})R^{\frac{3}{2}-\delta}.
\end{eqnarray*}
Similarly, integration with respect to $v$ yields
\begin{eqnarray*}
|P(u,v)| & \lesssim & \int_{u}^v |r_R| \phi^2 dv'+\int_{u}^v |W-V| \phi^2 r dv'+Y^{(0)}\int_{u}^v |\phi| r^{2-\delta} dv' \\
& \lesssim & R+(M+X^{(0)}+Y^{(0)}+Z^{(0)})\int_{u}^v \phi^2 r^{2-\delta} dv'+Y^{(0)}\int_{u}^v \phi r^{2-\delta} dv'\\
& \lesssim & eR^{\frac{3}{2}-\delta}+e(M+X^{(0)}+Y^{(0)}+Z^{(0)}){\left(\int_{u}^v R^{3-2\delta} dv'\right)}^{\frac{1}{2}}\\
& \lesssim & e(M+X^{(0)}+Y^{(0)}+Z^{(0)})R^{\frac{3}{2}-\delta}.
\end{eqnarray*}

Now we give an estimate for $Z^{(1)}$. By shifting time, we may take $T_1=0$, then, by energy estimates of \eqref{5.19}, we have
\begin{eqnarray*}
 \int_{-T-|x|}^{T-|x|} {{V}_u}^2(u,T+|x|)R du
 &\lesssim &  \int_{-T-|x|}^{T+|x|} {V_{v}}^2(-T-|x|,v)R d v+ \left|\int_{-T-|x|}^{T} \int_{0}^{T+|x|-|\tau|} V_{\tau} \cdot  F R dR d \tau\right| \\
 & \lesssim & M+\int_{T}^{T+|x|} \int_{-v}^{T}  |V_{\tau}| \cdot |F| R du dv +\int_{0}^{T+|x|} \int_{-T-|x|}^{-v} |V_{\tau}| \cdot |F| R du dv  \\
 &&+\int_{0}^{T} \int_{-v}^{v}  |V_{\tau}| \cdot |F| R du dv +\int_{-T-|x|}^0 \int_{-T-|x|}^{v}  |V_{\tau}| \cdot |F|R du dv \\
 &\triangleq & M+\alpha+\beta+\eta+\mu
\end{eqnarray*}

With the use of the regularity, we can easily get estimates for $\beta$ and $\mu$,
\begin{displaymath}
|\beta| \leq M, \quad |\mu| \leq M.
\end{displaymath}

Then, we can define $\alpha_j$ and $\eta_j$ respectively as in \eqref{5.19}, and we define $\alpha_j^u$ and $\alpha_j^v$ as follows,
\begin{displaymath}
\alpha_j^u \triangleq \int_{T}^{T+|x|} \int_{-v}^{T}  |V_{u}| \cdot |F_j| R du dv
\end{displaymath}
\begin{displaymath}
\alpha_j^v \triangleq \int_{T}^{T+|x|} \int_{-v}^{T}  |V_{v}| \cdot |F_j| R du dv
\end{displaymath}
where $j=1,\cdots,12$. Similarly, we can define $\eta_j^u,\eta_j^v$.

For ${\alpha}_1$, we have
\begin{eqnarray*}
{\alpha}_1^u  &\lesssim &    \int_{T}^{T+|x|} \int_{-v}^{T} |V_u|\cdot |V| \cdot{|\phi|}^2 R du dv  \\
 & \lesssim & Y^{(1)}  \int_{T}^{T+|x|} \int_{-v}^{T}  |V_u| \cdot{|\phi|}^2 R^{2-\delta} du dv \\
 & \lesssim &  Y^{(1)}Z^{(1)} \int_{T}^{T+|x|}dv{\left(   \int_{-v}^{T}  {|\phi|}^4 R^{3-2\delta} du \right)}^{\frac{1}{2}} \\
 & \lesssim & e Y^{(1)}Z^{(1)},
\end{eqnarray*}
and
\begin{eqnarray*}
{\alpha}_1^v &\lesssim &   X^{(1)} Y^{(1)}  \int_{T}^{T+|x|} \int_{-v}^{T} {|\phi|}^2 R^{2-2\delta} du dv \\
 & \lesssim &  X^{(1)} Y^{(1)} \int_{T}^{T+|x|} dv {\left(   \int_{-v}^{T}  {|\phi|}^4 R du \right)}^{\frac{1}{2}}{\left(   \int_{-v}^{T}   R^{3-4\delta} du \right)}^{\frac{1}{2}} \\
 & \lesssim & e X^{(1)} Y^{(1)}.
\end{eqnarray*}
Thus,
\begin{displaymath}
{\alpha}_1  \lesssim e Y^{(1)}Z^{(1)}+e X^{(1)} Y^{(1)},
\end{displaymath}
where $e(\varepsilon,T-T_1)$ are sufficiently small. And we can get similar estimates for ${\alpha}_2,{\alpha}_3,{\alpha}_4$.

Then, for ${\alpha}_{5}$, we have
\begin{eqnarray*}
{\alpha}_{5}^u  &\lesssim &    \int_{T}^{T+|x|} \int_{-v}^{T} |V_u|\cdot |W| \cdot{|\phi|}^2 R du dv   \\
 & \lesssim & L^{(1)}  \int_{T}^{T+|x|} \int_{-v}^{T}  |V_u| \cdot{|\phi|}^2 R^{2-\delta} du dv \\
 & \lesssim &  L^{(1)}Z^{(1)} \int_{T}^{T+|x|}dv{\left(   \int_{-v}^{T}  {|\phi|}^4 R^{3-2\delta} du \right)}^{\frac{1}{2}} \\
 & \lesssim & e (M+X^{(0)}+Y^{(0)}+Z^{(0)}) \cdot Z^{(1)} \\
  & \lesssim & e (M+{X^{(0)}}^2+{Y^{(0)}}^2+{Z^{(0)}}^2),
\end{eqnarray*}
and
\begin{eqnarray*}
{\alpha}_{5}^v  &\lesssim &   X^{(1)} L^{(1)}  \int_{T}^{T+|x|} \int_{-v}^{T} {|\phi|}^2 R^{2-2\delta} du dv   \\
 & \lesssim &  X^{(1)} L^{(1)} \int_{T}^{T+|x|} dv {\left(   \int_{-v}^{T}  {|\phi|}^4 R du \right)}^{\frac{1}{2}}{\left(   \int_{-v}^{T}   R^{3-4\delta} du \right)}^{\frac{1}{2}} \\
  & \lesssim & e (M+X^{(0)}+Y^{(0)}+Z^{(0)})\cdot X^{(1)} \\
  & \lesssim & e (M+{X^{(0)}}^2+{Y^{(0)}}^2+{Z^{(0)}}^2),
\end{eqnarray*}
thus,
\begin{displaymath}
{\alpha}_{5}  \lesssim e (M+{X^{(0)}}^2+{Y^{(0)}}^2+{Z^{(0)}}^2).
\end{displaymath}
Similar estimates hold for $\alpha_{6}$.

Using Lemma \ref{lem5.5}, we can get
$$|\frac{2R r_u+r}{r} | \lesssim \varepsilon R,$$
then we have
\begin{eqnarray*}
{\alpha}_7^v  &\lesssim &   {X^{(1)}}^2  \int_{T}^{T+|x|} \int_{-v}^{T}  R^{1-2\delta} du dv   \\
 & \lesssim & e {X^{(1)}}^2,
\end{eqnarray*}
and
\begin{eqnarray*}
{\alpha}_7^u  &\lesssim &   X^{(1)} Z^{(1)}  \int_{T}^{T+|x|} dv {\left(\int_{-v}^{T}  R^{1-2\delta} du \right)}^{\frac{1}{2}}   \\
 & \lesssim & e X^{(1)} Z^{(1)},
\end{eqnarray*}
thus,
\begin{displaymath}
{\alpha}_7  \lesssim e {X^{(1)}}^2+e X^{(1)} Z^{(1)}.
\end{displaymath}

Similarly, we get estimates for ${\alpha}_{10}$,
\begin{eqnarray*}
{\alpha}_{10}^v  &\lesssim &   X^{(1)} Z^{(1)}  \int_{T}^{T+|x|} dv {\left(\int_{-v}^{T}  R^{1-2\delta} du \right)}^{\frac{1}{2}}   \\
 & \lesssim & e X^{(1)} Z^{(1)},
\end{eqnarray*}
and
\begin{eqnarray*}
{\alpha}_{10}^u  &\lesssim &   {Z^{(1)}}^2  \int_{T}^{T+|x|} dv    \\
 & \lesssim & e  {Z^{(1)}}^2,
\end{eqnarray*}
thus,
\begin{displaymath}
{\alpha}_{10}  \lesssim e {Z^{(1)}}^2+e X^{(1)} Z^{(1)}.
\end{displaymath}

Now we give estimates for $\alpha_8$,
\begin{displaymath}
{\alpha}_8^v  \lesssim      \int_{T}^{T+|x|} \int_{-v}^{T}  |V_v| \cdot |1+\frac{2r_u R^2}{r^2} r_v-\frac{2r_u R^2}{r^2}r_u| \cdot |U_v| R^{-1}du dv.
\end{displaymath}
Noting that
\begin{eqnarray*}
|1+\frac{2r_u R^2}{r^2} r_v-\frac{2r_u R^2}{r^2}r_u|
& = &   |1+\frac{2r_u R^2}{r^2} (r_v-r_u-1)+\frac{2r_u R^2}{r^2}|  \\
& = &   |\frac{r^2-R^2+(2r_u+1)R^2}{r^2}+\frac{(2r_u+1) R^2}{r^2} (r_v-r_u-1)-\frac{R^2}{r^2}(r_v-r_u-1)|  \\
& \lesssim & \varepsilon R.
\end{eqnarray*}
Thus,
\begin{eqnarray*}
{\alpha}_8^v
& \lesssim & \varepsilon \int_{T}^{T+|x|} \int_{-v}^{T}  |V_v|  |U_v| du dv \\
& \lesssim &  e {X^{(1)}} \int_{T}^{T+|x|} \int_{-v}^{T}  R^{-2\delta}du dv \\
& \lesssim &  e {X^{(1)}}.
\end{eqnarray*}

Then, noting that by Lemma \ref{lem5.5},
\begin{displaymath}
|2 r_u+1 | \lesssim \varepsilon R^{1+\frac{1}{2}+\frac{1}{4}},\,|2r_v-1 | \lesssim \varepsilon R^{1+\frac{1}{2}+\frac{1}{4}},\,|r-R| \lesssim \varepsilon R^{2+\frac{1}{2}+\frac{1}{4}},
\end{displaymath}
then we have
\begin{eqnarray*}
{\alpha}_8^u  &\lesssim &     \int_{T}^{T+|x|} \int_{-v}^{T}  |V_u| \cdot |1+\frac{2r_u R^2}{r^2} r_v-\frac{2r_u R^2}{r^2}r_u| \cdot |U_v| R^{-1}du dv \\
& \lesssim & \varepsilon \int_{T}^{T+|x|} \int_{-v}^{T}  |V_u|  |U_v| R^{\frac{3}{4}}du dv \\
& \lesssim &  \varepsilon {Z^{(1)}} {\left(\int_{T}^{T+|x|} \int_{-v}^{T}  R^{\frac{1}{2}-2\delta}du dv\right)}^{\frac{1}{2}} \\
& \lesssim &  \varepsilon {Z^{(1)}},
\end{eqnarray*}
thus,
\begin{eqnarray*}
{\alpha}_8 &\lesssim &      e {X^{(1)}}+ e {Z^{(1)}} \\
&\lesssim &      e ({X^{(1)}}^2+  {Z^{(1)}}^2+1).
\end{eqnarray*}
Similarly, we can get
\begin{displaymath}
{\alpha}_{11}^v   \lesssim   e {X^{(1)}},
\end{displaymath}
and
\begin{eqnarray*}
{\alpha}_{11}^u  &\lesssim &   e {Z^{(1)}} {\left(\int_{T}^{T+|x|} \int_{-v}^{T}  R^{\frac{1}{2}} {U_u}^2 du dv\right)}^{\frac{1}{2}} \\
&\lesssim &   \varepsilon {Z^{(1)}} {\left(\int_{T}^{T+|x|} {(v-T)}^{-\frac{1}{2}} dv\right)}^{\frac{1}{2}} \\
& \lesssim &  \varepsilon {Z^{(1)}},
\end{eqnarray*}
thus,
\begin{eqnarray*}
{\alpha}_{11}  &\lesssim &      e {X^{(1)}}+ e {Z^{(1)}} \\
&\lesssim &      e ({X^{(1)}}^2+  {Z^{(1)}}^2+1).
\end{eqnarray*}

Now we estimate $\alpha_9$ and $\alpha_{12}$. For $\alpha_9$, we have
\begin{eqnarray*}
{\alpha}_9^u  &\lesssim &     \int_{T}^{T+|x|} \int_{-v}^{T}  |V_u| |P| |U_v| du dv \\
& \lesssim & e(M+X^{(0)}+Y^{(0)}+Z^{(0)}) \int_{T}^{T+|x|} \int_{-v}^{T}  |V_u|  |U_v| R^{\frac{3}{2}-\delta}du dv \\
& \lesssim &  e {Z^{(0)}}(M+X^{(0)}+Y^{(0)}+Z^{(0)}) {\left(\int_{T}^{T+|x|} \int_{-v}^{T}  R^{2-4\delta}du dv\right)}^{\frac{1}{2}} \\
& \lesssim &  e {Z^{(0)}}(M+X^{(0)}+Y^{(0)}+Z^{(0)}),
\end{eqnarray*}
and
\begin{eqnarray*}
{\alpha}_9^v  &\lesssim &     \int_{T}^{T+|x|} \int_{-v}^{T}  |V_v| |P| |U_v| du dv \\
& \lesssim & e(M+X^{(0)}+Y^{(0)}+Z^{(0)}) \int_{T}^{T+|x|} \int_{-v}^{T}  |V_v|  |U_v| R^{\frac{3}{2}-\delta}du dv \\
& \lesssim &  e {X^{(0)}}(M+X^{(0)}+Y^{(0)}+Z^{(0)}) \int_{T}^{T+|x|} \int_{-v}^{T}  R^{\frac{3}{2}-3\delta}du dv \\
& \lesssim &  e {X^{(0)}}(M+X^{(0)}+Y^{(0)}+Z^{(0)}),
\end{eqnarray*}
thus,
\begin{eqnarray*}
{\alpha}_9  &\lesssim &      e {X^{(0)}}(M+X^{(0)}+Y^{(0)}+Z^{(0)})+ e {Z^{(0)}}(M+X^{(0)}+Y^{(0)}+Z^{(0)}) \\
&\lesssim &      e ({X^{(0)}}^2+{Y^{(0)}}^2+{Z^{(0)}}^2+M).
\end{eqnarray*}

For $\alpha_{12}$, we have
\begin{eqnarray*}
{\alpha}_{12}^v  &\lesssim &     \int_{T}^{T+|x|} \int_{-v}^{T}  |V_v| |Q| |U_u| du dv \\
& \lesssim & e(M+X^{(0)}+Y^{(0)}+Z^{(0)}) \int_{T}^{T+|x|} \int_{-v}^{T}  |V_v|  |U_u| R^{\frac{3}{2}-\delta}du dv \\
& \lesssim &  e {X^{(0)}}(M+X^{(0)}+Y^{(0)}+Z^{(0)}) {\left(\int_{T}^{T+|x|} \int_{-v}^{T}  R^{2-4\delta}du dv\right)}^{\frac{1}{2}} \\
& \lesssim &  e {X^{(0)}}(M+X^{(0)}+Y^{(0)}+Z^{(0)}),
\end{eqnarray*}
and
\begin{eqnarray*}
{\alpha}_{12}^u &\lesssim &     \int_{T}^{T+|x|} \int_{-v}^{T}  |V_u| |Q| |U_u| du dv \\
& \lesssim & e(M+X^{(0)}+Y^{(0)}+Z^{(0)}) \int_{T}^{T+|x|} \int_{-v}^{T}  |V_u|  |U_u| R^{\frac{3}{2}-\delta}du dv \\
& \lesssim &  e {Z^{(0)}}(M+X^{(0)}+Y^{(0)}+Z^{(0)}) {\left(\int_{T}^{T+|x|} \int_{-v}^{T}  R^{2-2\delta}{|U_u|}^2du dv\right)}^{\frac{1}{2}} \\
& \lesssim &  e {Z^{(0)}}(M+X^{(0)}+Y^{(0)}+Z^{(0)}) {\left(\int_{T}^{T+|x|}  {(v-T)}^{1-2\delta} dv\right)}^{\frac{1}{2}} \\
& \lesssim &  e {Z^{(0)}}(M+X^{(0)}+Y^{(0)}+Z^{(0)}),
\end{eqnarray*}
thus,
\begin{eqnarray*}
{\alpha}_{12}  &\lesssim &      e {X^{(0)}}(M+X^{(0)}+Y^{(0)}+Z^{(0)})+ e {Z^{(0)}}(M+X^{(0)}+Y^{(0)}+Z^{(0)}) \\
&\lesssim &      e ({X^{(0)}}^2+{Y^{(0)}}^2+{Z^{(0)}}^2+M).
\end{eqnarray*}

Before estimating $\eta$, we give a following Morawetz type estimate,
\begin{lemma} \label{lem5.6}
The following estimate holds for $0<T_0<T^*-T_1$,
\begin{equation} \label{5.26}
\int_0^{T_0} \int_0^{T_0-T} {U_u}^2 R^{\sigma-1} dT dR \lesssim \varepsilon
\end{equation}
where $\sigma=\frac{3}{2}$.
\end{lemma}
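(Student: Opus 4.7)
My plan is to derive the estimate from a Morawetz-type identity obtained by testing the wave equation $\Box_m U = h$ in \eqref{1.18} against the radial multiplier $R^{3/2}\partial_u U = R^{3/2} U_u$ and integrating over $K_0$, equivalently over the triangle $\Omega = \{(u,v): 0 \leq v \leq T_0,\ -v \leq u \leq v\}$ in null coordinates $u = T-R,\ v = T+R$. Using the decomposition $\Box_m U = -4 U_{uv} + R^{-1}(U_v - U_u)$ and $\partial_v R = 1/2$, an elementary product-rule computation yields the pointwise identity
\[
-2 \partial_v \!\left( R^{3/2} U_u^2 \right) + \tfrac{1}{2} R^{1/2} U_u^2 + R^{1/2} U_u U_v = R^{3/2} U_u h .
\]
Integrating over $\Omega$ with $dT\, dR = \tfrac{1}{2}\, du\,dv$, the coefficient $\tfrac{1}{2} R^{1/2} U_u^2$ of the bulk term recovers (up to constants) the spacetime quantity to be estimated.

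The right-hand side decomposes into a divergence piece, a cross piece and a source piece. By Stokes' theorem the divergence piece reduces to integrals on $\partial\Omega$: on the axis $u = v$ the contribution vanishes because $R = 0$; on the ingoing null edge $v = T_0$ one has $\int R^{3/2}(u,T_0) U_u^2(u,T_0)\, du \lesssim T_0^{1/2} \int U_u^2 R\, du$, which is $O(\varepsilon)$ by the standard null-cone energy estimate from \cite{chen}; and on the initial edge $v = -u$ the term is directly bounded by the small initial energy. The cross piece $\iint R^{1/2} U_u U_v$ has no definite sign, but Young's inequality combined with the pointwise bound $|U_v| \lesssim A R^{-\delta}$ coming from \eqref{5.17} gives
\[
R^{1/2} |U_u U_v| \leq \eta R^{1/2} U_u^2 + C_\eta A^2 R^{1/2 - 2\delta} ,
\]
where the first summand will be absorbed into the left-hand side by choosing $\eta$ small, and the second is integrable on $\Omega$ precisely because $\delta < \tfrac{2}{3}$ guarantees $1/2 - 2\delta > -5/6 > -1$, producing an upper bound of order $T_0^{5/2-2\delta}$ which is $O(\varepsilon)$ since $T_0 < T^* - T_1$ is chosen sufficiently small. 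For the source piece $\iint R^{3/2} U_u h\, du\,dv$, the decomposition $h = \mathcal{H} + (r_T/r) U_T - (r_R/r - 1/R) U_R$ from \eqref{1.18} splits it into potential-type and geometric parts: the former is handled via $|U| \leq A$ from Lemma \ref{lem5.4} and the uniform lower bound on $\gamma$, while the latter benefits from Lemma \ref{lem5.5}, which provides $|r_T/r|,\ |r_R/r - 1/R| \lesssim \varepsilon R^s$ for some $s \in [1,2)$ and thus a direct factor of $\varepsilon$ in front of terms again bounded by Cauchy--Schwarz against $\int U_u^2 R$ on null cones.

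The main obstacle is the lack of sign of the cross term $R^{1/2} U_u U_v$, which forces us to trade it against the pointwise $X$-bound established in Section 5.1. The range $\tfrac{1}{2} < \delta < \tfrac{2}{3}$ is essentially sharp for this step: the upper bound on $\delta$ is exactly what makes the weight $R^{1/2-2\delta}$ locally integrable on $\Omega$, while the lower bound is what allows the energy scheme for $X$ to close in the first place. Once the small multiples of the left-hand side are absorbed into itself, we conclude
\[
\int_0^{T_0} \int_0^{T_0 - T} U_u^2 \, R^{\sigma - 1} \, dT\,dR \lesssim \varepsilon ,
\]
which is the claim.
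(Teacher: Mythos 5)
Your proposal is correct and reaches the estimate by a genuinely different multiplier than the paper. The paper multiplies \eqref{1.18} by $U_R R^{\sigma}$, integrates in $R$ and then in $T$, and obtains a bulk term $\tfrac{3}{4}R^{1/2}U_T^2-\tfrac{1}{4}R^{1/2}U_R^2$ that is not sign-definite as it stands; it then substitutes $U_T=2U_v-V$ (with $V=U_R$), isolates $(\sigma-1)\iint V^2R^{\sigma-1}$ on the left, disposes of the cross term $VU_v$ by Young's inequality and of $\iint R^{\sigma-1}U_v^2$ by the pointwise bound $|U_v|\lesssim R^{-\delta}$ from \eqref{5.17}, and only at the very end recovers the claim via $U_u=U_v-V$. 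You instead test against $R^{3/2}\partial_u U$, which is the combination of the energy and radial multipliers adapted to the ingoing direction; your pointwise identity is correct (the coefficient of the bulk term is $(\sigma-1)R^{\sigma-1}U_u^2$, positive precisely because $\sigma>1$), so the target quantity appears directly with a good sign and no substitution is needed. The remaining ingredients are the same in both arguments: the ingoing null-cone flux bound $\int_{v=\mathrm{const}}U_u^2R\,du\lesssim\varepsilon$ for the boundary term (the paper's $\int R^{\sigma}|U_u|^2du$ term is handled identically), the $X$-bound \eqref{5.17} plus the integrability condition $\tfrac12-2\delta>-1$ (i.e.\ $\delta<\tfrac23$) and the smallness of $T_0<T^*-T_1$ for the cross/$U_v^2$ terms, and Lemmas \ref{lem5.4}--\ref{lem5.5} for the source $h$. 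Your route is arguably cleaner and avoids the intermediate estimates on $\iint V^2R^{\sigma-1}$ and $\iint U_v^2R^{\sigma-1}$, which the paper obtains as byproducts but never uses downstream. One immaterial slip: Lemma \ref{lem5.5} gives $|r_T/r|,\,|r_R/r-1/R|\lesssim\varepsilon R^{s-1}$ rather than $\varepsilon R^{s}$ (since $r\sim R$ contributes an extra $R^{-1}$), but as $s\geq 1$ this still carries the factor $\varepsilon$ your argument needs.
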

\begin{proof}
Multiplying \eqref{1.18} by $U_R R^{\sigma}$, we obtain
\begin{eqnarray*}
&&{  \left(U_T U_R R^{\sigma}\right)}_T - {\left( \frac{R^{\sigma}}{2}({U_T}^2+{U_R}^2)  \right)}_R-\left( R^{\sigma-1} {U_R}^2-\frac{\sigma R^{\sigma-1}}{2}({U_T}^2+{U_R}^2) \right)\\
=&&\mathcal{H} \cdot U_R R^{\sigma}+R^{\sigma-1} \frac{2r_u R+r}{r} U_v U_R+R^{\sigma-1} \frac{2r_v R-r}{r} U_u U_R.
\end{eqnarray*}

Integrating this identity with respect to $R$, we get
\begin{eqnarray*}
&& \frac{d}{dT} \int_{0}^{T_0-T} U_T U_R R^{\sigma} dR -\frac{R^{\sigma}}{2} {\left( U_T-U_R \right)}^2  |_{R=T_0-T}-\int_0^{T_0-T} \left(R^{\sigma-1} {U_R}^2-\frac{\sigma R^{\sigma-1}}{2}({U_T}^2+{U_R}^2)\right) dR \\
=&&\int_0^{T_0-T}\mathcal{H}\cdot U_R R^{\sigma}dR+\int_0^{T_0-T}R^{\sigma-1} \frac{2r_u R+r}{r} U_v U_RdR\\
&&+\int_0^{T_0-T}R^{\sigma-1} \frac{2r_v R-r}{r} U_u U_R dR
\end{eqnarray*}
Then, integrating this identity with respect to $T$, we obtain
\begin{eqnarray*}
&&-\tilde{E}(0)-\int R^{\sigma} {|U_u|}^2 du-\int_0^{T_0}\int_0^{T_0-T} (R^{\sigma-1} {U_R}^2-\frac{\sigma R^{\sigma-1}}{2}({U_T}^2+{U_R}^2) )dR dT \\
=&&\int_0^{T_0}\int_0^{T_0-T}\mathcal{H}\cdot U_R R^{\sigma}dR dT+\int_0^{T_0}\int_0^{T_0-T}R^{\sigma-1} \frac{2r_u R+r}{r} U_v U_RdR dT\\
&&+\int_0^{T_0} \int_0^{T_0-T}R^{\sigma-1} \frac{2r_v R-r}{r} U_u U_R dR dT
\end{eqnarray*}
where $\tilde{E}(0)=\int_0^{T_0} U_T U_R R^{\sigma} dR$.

Then, using the fact that $U_T=2U_v-V$, we have
\begin{eqnarray*}
\int_0^{T_0} \int_0^{T_0-T} (\sigma-1){V}^2 R^{\sigma-1} dT dR&=&\tilde{E}(0)+\int R^{\sigma} {|U_u|}^2 du +\int_0^{T_0}\int_0^{T_0-T} (2\sigma R^{\sigma-1}V U_v -2\sigma R^{\sigma-1}{U_v}^2) dR dT \\
&&+\int_0^{T_0}\int_0^{T_0-T}e^{2\lambda}\mathcal{H} \cdot U_R R^{\sigma}dR dT\\
&&+\int_0^{T_0}\int_0^{T_0-T}R^{\sigma-1} \frac{2r_u R+r}{r} U_v U_RdR dT\\
&&+\int_0^{T_0} \int_0^{T_0-T}R^{\sigma-1} \frac{2r_v R-r}{r} U_u U_R dR dT.
\end{eqnarray*}
The integration on the left hand side is nonnegative because $\sigma>1$, and we have
\begin{displaymath}
V \cdot U_v \leq \frac{1}{40\sigma} V^2+10\sigma {U_v}^2.
\end{displaymath}
Then, there exists some constant $C$ such that
\begin{displaymath}
0<C<\sigma-1-\frac{1}{80}.
\end{displaymath}
Therefore,
\begin{eqnarray*}
0 \leq \int_0^{T_0} \int_0^{T_0-T} (\sigma-1-\frac{1}{80}){V}^2 R^{\sigma-1} dT dR &\lesssim & \tilde{E}(0)+\int R^{\sigma} {|U_u|}^2 du+\int_0^{T_0}\int_0^{T_0-T}(20\sigma^2 -2\sigma )R^{\sigma-1}{U_v}^2 dR dT \\
&&+\int_0^{T_0}\int_0^{T_0-T}\phi^2 |V| R^{\sigma}dR dT+\int_0^{T_0} \int_0^{T_0-T} |\phi| |V| R^{\sigma}dR dT \\
&&+\int_0^{T_0}\int_0^{T_0-T}R^{\sigma-1} |\frac{2r_u R+r}{r}| |U_v| |V|dR dT\\
&&+\int_0^{T_0} \int_0^{T_0-T}R^{\sigma-1} |\frac{2r_v R-r}{r}| |U_u| |V| dR dT.
\end{eqnarray*}

Noting that $\tilde{E}(0) \lesssim \varepsilon$ and $\sigma>1$, obviously there holds
\begin{displaymath}
\tilde{E}(0)+\int \frac{R^{\sigma}}{8} {|U_u|}^2 du \lesssim \varepsilon,
\end{displaymath}
then we have
\begin{eqnarray*}
\int_0^{T_0}\int_0^{T_0-T}\phi^2 |V| R^{\sigma}dR dT & \lesssim & \int_0^{T_0}\int_{-v}^{v}\phi^2 |V| R^{\sigma}du dv \\
& \lesssim & {\left(\int_0^{T_0}\int_{-v}^{v}\phi^4  Rdu dv\right)}^{\frac{1}{2}}{\left( \int_0^{T_0}\int_{-v}^{v} V^2 R^{2\sigma-1}du dv\right)}^{\frac{1}{2}} \\
& \lesssim & \varepsilon{\left( \int_0^{T_0}\int_{-v}^{v} {|U_u|}^2 Rdu dv+\int_0^{T_0}\int_{-v}^{v} {|U_v|}^2 R^{2\sigma-1}du dv\right)}^{\frac{1}{2}} \\
& \lesssim & \varepsilon{\left( \varepsilon+\int_0^{T_0}\int_{-v}^{v}  R^{2\sigma-1-2\delta}du dv'\right)}^{\frac{1}{2}} \\
& \lesssim & \varepsilon{\left( \varepsilon+\int_0^{T_0}\int_{-v}^{v}  R^{2-2\delta}du dv\right)}^{\frac{1}{2}}\\
& \lesssim & \varepsilon.
\end{eqnarray*}
Similarly, we have
\begin{displaymath}
\int_0^{T_0} \int_0^{T_0-T} |\phi| |V| R^{\sigma}dR dT \lesssim \varepsilon,
\end{displaymath}
noting that by Lemma \ref{lem5.5},
\begin{displaymath}
|\frac{2R r_u+r}{r} | \lesssim \varepsilon R,\,|\frac{2R r_v-r}{r} | \lesssim \varepsilon R,
\end{displaymath}
then, we obtain
\begin{eqnarray*}
\int_0^{T_0}\int_0^{T_0-T}R^{\sigma-1} |\frac{2r_u R+r}{r}| |U_v| |V|dR dT & \lesssim & \int_0^{T_0}\int_0^{T_0-T}R^{\sigma} |U_v| |V|dR dT \\
& \lesssim & {\left(\int_0^{T_0}\int_0^{T_0-T}R {|V|}^2dR dT\right)}^{\frac{1}{2}} {\left(\int_0^{T_0}\int_0^{T_0-T}R^{2\sigma-1} {|U_v|}^2du dv\right)}^{\frac{1}{2}} \\
& \lesssim & \varepsilon,
\end{eqnarray*}
and
\begin{eqnarray*}
\int_0^{T_0}\int_0^{T_0-T}R^{\sigma-1} |\frac{2r_v R-r}{r}| |U_u| |V|dR dT & \lesssim & \int_0^{T_0}\int_0^{T_0-T}R^{\sigma} |U_u| |V|dR dT \\
& \lesssim & {\left(\int_0^{T_0}\int_0^{T_0-T}R {|V|}^2dR dT\right)}^{\frac{1}{2}} {\left(\int_0^{T_0}\int_0^{T_0-T}R^{2\sigma-1} {|U_u|}^2du dv\right)}^{\frac{1}{2}} \\
& \lesssim & \varepsilon.
\end{eqnarray*}

Now we estimate the left term,
\begin{eqnarray*}
\int_0^{T_0}\int_0^{T_0-T}R^{\sigma-1}{U_v}^2 dR dT & \lesssim & \int_0^{T_0}\int_0^{T_0-T}R^{\sigma-1-2\delta} dR dT \\
&=& \int_0^{T_0}\int_0^{T_0-T}R^{\frac{1}{2}-2\delta} dR dT \\
& \lesssim & \varepsilon.
\end{eqnarray*}
Thus, we have
\begin{displaymath}
\int_0^{T_0}\int_0^{T_0-T}R^{\sigma-1}{U_v}^2 dR dT \lesssim  \varepsilon,
\end{displaymath}
and
\begin{displaymath}
\int_0^{T_0}\int_0^{T_0-T}R^{\sigma-1}{V}^2 dR dT \lesssim  \varepsilon.
\end{displaymath}
Then, by using $U_u=U_v-V$, we finish the proof of the lemma.
\end{proof}

Now we estimate $\eta$. For ${\eta}_1$, we have
\begin{eqnarray*}
{\eta}_1^u  &\lesssim &    \int_{0}^T \int_{-v}^{v} |V_u|\cdot |V| \cdot{|\phi|}^2 R du dv   \\
 & \lesssim & Y^{(1)}  \int_{0}^T \int_{-v}^{v}  |V_u| \cdot{|\phi|}^2 R^{2-\delta} du dv \\
 & \lesssim &  Y^{(1)}Z^{(1)} \int_{0}^{T}dv{\left(   \int_{-v}^{v}  {|\phi|}^4 R^{3-2\delta} du \right)}^{\frac{1}{2}} \\
 & \lesssim & e Y^{(1)}Z^{(1)},
\end{eqnarray*}
and
\begin{eqnarray*}
{\eta}_1^v  &\lesssim &   X^{(1)} Y^{(1)}  \int_{0}^T \int_{-v}^{v} {|\phi|}^2 R^{2-2\delta} du dv   \\
 & \lesssim &  X^{(1)} Y^{(1)} \int_{0}^T dv {\left(   \int_{-v}^{v}  {|\phi|}^4 R du \right)}^{\frac{1}{2}}{\left(   \int_{-v}^{v}   R^{3-4\delta} du \right)}^{\frac{1}{2}} \\
 & \lesssim & e X^{(1)} Y^{(1)},
\end{eqnarray*}
thus,
\begin{displaymath}
{\eta}_1  \lesssim e Y^{(1)}Z^{(1)}+e X^{(1)} Y^{(1)}.
\end{displaymath}
Then, we can get similar estimates for ${\eta}_2,{\eta}_3,{\eta}_4$.

For ${\eta}_{5}$, we have
\begin{eqnarray*}
{\eta}_{5}^u  &\lesssim &    \int_{0}^T \int_{-v}^{v} |V_u|\cdot |W| \cdot{|\phi|}^2 R du dv   \\
 & \lesssim & L^{(1)}  \int_{0}^T \int_{-v}^{v}  |V_u| \cdot{|\phi|}^2 R^{2-\delta} du dv \\
 & \lesssim &  L^{(1)}Z^{(1)} \int_{0}^{T}dv{\left(   \int_{-v}^{v}  {|\phi|}^4 R^{3-2\delta} du \right)}^{\frac{1}{2}} \\
 & \lesssim & e L^{(1)}Z^{(1)},
\end{eqnarray*}
and
\begin{eqnarray*}
{\eta}_{5}^v  &\lesssim &   X^{(1)} L^{(1)}  \int_{0}^T \int_{-v}^{v} {|\phi|}^2 R^{2-2\delta} du dv   \\
 & \lesssim &  X^{(1)} L^{(1)} \int_{0}^T dv {\left(   \int_{-v}^{v}  {|\phi|}^4 R du \right)}^{\frac{1}{2}}{\left(   \int_{-v}^{v}   R^{3-4\delta} du \right)}^{\frac{1}{2}} \\
 & \lesssim & e X^{(1)} L^{(1)},
\end{eqnarray*}
thus,
\begin{displaymath}
{\eta}_{5} \lesssim e (M+{X^{(0)}}^2+{Y^{(0)}}^2+{Z^{(0)}}^2).
\end{displaymath}
Similar estimates hold for ${\eta}_{6}$.

Noting that $|\frac{2R r_u+r}{r} | \lesssim \varepsilon R$, we have
\begin{eqnarray*}
{\eta}_7^v  &\lesssim &   {X^{(1)}}^2  \int_{0}^T \int_{-v}^{v}  R^{1-2\delta} du dv   \\
 & \lesssim & e {X^{(1)}}^2,
\end{eqnarray*}
and
\begin{eqnarray*}
{\eta}_7^u  &\lesssim &   X^{(1)} Z^{(1)}  \int_{0}^T dv {\left(\int_{-v}^{v}  R^{1-2\delta} du \right)}^{\frac{1}{2}}   \\
 & \lesssim & e X^{(1)} Z^{(1)},
\end{eqnarray*}
thus,
\begin{displaymath}
{\eta}_7  \lesssim e {X^{(1)}}^2+e X^{(1)} Z^{(1)}.
\end{displaymath}
Similarly, we get estimates for ${\eta}_{10}$,
\begin{displaymath}
{\eta}_{10} \lesssim e {Z^{(1)}}^2+e X^{(1)} Z^{(1)}.
\end{displaymath}

Then, for $\eta_8$, we have
\begin{eqnarray*}
{\eta}_8^v  &\lesssim &     \int_{0}^T \int_{-v}^{v}  |V_v| \cdot |1+\frac{2r_u R^2}{r^2} r_v-\frac{2r_u R^2}{r^2}r_u| \cdot |U_v| R^{-1}du dv \\
& \lesssim &  \int_{0}^T \int_{-v}^{v}  |V_v| \cdot |1+\frac{2r_u R^2}{r^2} (r_v-r_u-1)+\frac{2r_u R^2}{r^2}| \cdot |U_v| R^{-1}du dv \\
& \lesssim & \varepsilon \int_{0}^T \int_{-v}^{v}  |V_v|  |U_v| R^{\frac{3}{4}}du dv \\
& \lesssim &  e {X^{(1)}} \int_{0}^T \int_{-v}^{v}  R^{\frac{3}{4}-2\delta}du dv \\
& \lesssim &  e {X^{(1)}},
\end{eqnarray*}
then we have
\begin{eqnarray*}
{\eta}_8^u  &\lesssim &     \int_{0}^T \int_{-v}^{v}  |V_u| \cdot |1+\frac{2r_u R^2}{r^2} r_v-\frac{2r_u R^2}{r^2}r_u| \cdot |U_v| R^{-1}du dv \\
& \lesssim & \varepsilon \int_{0}^T \int_{-v}^{v}  |V_u|  |U_v| R^{\frac{3}{4}}du dv \\
& \lesssim &  e {Z^{(1)}} {\left(\int_{0}^T \int_{-v}^{v}  R^{\frac{1}{2}-2\delta}du dv\right)}^{\frac{1}{2}} \\
& \lesssim &  e {Z^{(1)}},
\end{eqnarray*}
thus,
\begin{eqnarray*}
{\eta}_8  &\lesssim &      e {X^{(1)}}+ e {Z^{(1)}} \\
&\lesssim &      e ({X^{(1)}}^2+  {Z^{(1)}}^2+1).
\end{eqnarray*}

Similarly, we can get
\begin{displaymath}
{\eta}_{11}^v   \lesssim   e {X^{(1)}}.
\end{displaymath}
Using Lemma \ref{lem5.6}, we get
\begin{eqnarray*}
{\eta}_{11}^u  &\lesssim &    {Z^{(1)}} {\left(\int_{0}^T \int_{-v}^{v}  R^{\frac{1}{2}} {U_u}^2 du dv\right)}^{\frac{1}{2}} \\
& \lesssim &  \varepsilon {Z^{(1)}},
\end{eqnarray*}
thus,
\begin{eqnarray*}
{\eta}_{11}  &\lesssim &      e {X^{(1)}}+ \varepsilon {Z^{(1)}} \\
&\lesssim &      e ({X^{(1)}}^2+  {Z^{(1)}}^2+1).
\end{eqnarray*}

Now we estimate $\eta_9$ and $\eta_{12}$. For $\eta_9$, we have
\begin{eqnarray*}
{\eta}_9^u  &\lesssim &     \int_{0}^T \int_{-v}^{v}  |V_u| |P| |U_v| du dv \\
& \lesssim & e(M+X^{(0)}+Y^{(0)}+Z^{(0)}) \int_{0}^T \int_{-v}^{v}  |V_u|  |U_v| R^{\frac{3}{2}-\delta}du dv \\
& \lesssim &  e {Z^{(0)}}(M+X^{(0)}+Y^{(0)}+Z^{(0)}) {\left(\int_{0}^T \int_{-v}^{v}  R^{2-4\delta}du dv\right)}^{\frac{1}{2}} \\
& \lesssim &  e {Z^{(0)}}(M+X^{(0)}+Y^{(0)}+Z^{(0)}),
\end{eqnarray*}
and
\begin{eqnarray*}
{\eta}_9^v  &\lesssim &     \int_{0}^T \int_{-v}^{v}  |V_v| |P| |U_v| du dv \\
& \lesssim & e(M+X^{(0)}+Y^{(0)}+Z^{(0)}) \int_{0}^T \int_{-v}^{v}  |V_v|  |U_v| R^{\frac{3}{2}-\delta}du dv \\
& \lesssim &  e {X^{(0)}}(M+X^{(0)}+Y^{(0)}+Z^{(0)}) \int_{0}^T \int_{-v}^{v}  R^{\frac{3}{2}-3\delta}du dv \\
& \lesssim &  e {X^{(0)}}(M+X^{(0)}+Y^{(0)}+Z^{(0)}),
\end{eqnarray*}
thus,
\begin{eqnarray*}
{\eta}_9  &\lesssim &      e {X^{(0)}}(M+X^{(0)}+Y^{(0)}+Z^{(0)})+ e {Z^{(0)}}(M+X^{(0)}+Y^{(0)}+Z^{(0)}) \\
&\lesssim &      e ({X^{(0)}}^2+{Y^{(0)}}^2+{Z^{(0)}}^2+M).
\end{eqnarray*}

For $\eta_{12}$, we have
\begin{eqnarray*}
{\eta}_{12}^v  &\lesssim &     \int_{0}^T \int_{-v}^{v}  |V_v| |Q| |U_u| du dv \\
& \lesssim & e(M+X^{(0)}+Y^{(0)}+Z^{(0)}) \int_{0}^T \int_{-v}^{v}  |V_v|  |U_u| R^{\frac{3}{2}-\delta}du dv \\
& \lesssim &  e {X^{(0)}}(M+X^{(0)}+Y^{(0)}+Z^{(0)}) {\left(\int_{0}^T \int_{-v}^{v}  R^{2-4\delta}du dv\right)}^{\frac{1}{2}} \\
& \lesssim &  e {X^{(0)}}(M+X^{(0)}+Y^{(0)}+Z^{(0)}),
\end{eqnarray*}
then by Lemma \ref{lem5.6},
\begin{eqnarray*}
{\eta}_{12}^u  &\lesssim &     \int_{0}^T \int_{-v}^{v}  |V_u| |Q| |U_u| du dv \\
& \lesssim & (M+X^{(0)}+Y^{(0)}+Z^{(0)}) \int_{0}^T \int_{-v}^{v}  |V_u|  |U_u| R^{\frac{3}{2}-\delta}du dv \\
& \lesssim &   {Z^{(0)}}(M+X^{(0)}+Y^{(0)}+Z^{(0)}) {\left(\int_{0}^T \int_{-v}^{v}  R^{2-2\delta}{|U_u|}^2du dv\right)}^{\frac{1}{2}} \\
& \lesssim &  \varepsilon {Z^{(0)}}(M+X^{(0)}+Y^{(0)}+Z^{(0)}) {\left(\int_{0}^T \int_{-v}^{v}  R^{\frac{1}{2}}{|U_u|}^2du dv\right)}^{\frac{1}{2}} \\
& \lesssim &  \varepsilon {Z^{(0)}}(M+X^{(0)}+Y^{(0)}+Z^{(0)}),
\end{eqnarray*}
thus,
\begin{eqnarray*}
{\eta}_{12}  &\lesssim &      e {X^{(0)}}(M+X^{(0)}+Y^{(0)}+Z^{(0)})+ \varepsilon {Z^{(0)}}(M+X^{(0)}+Y^{(0)}+Z^{(0)}) \\
&\lesssim &      e ({X^{(0)}}^2+{Y^{(0)}}^2+{Z^{(0)}}^2+M).
\end{eqnarray*}

Therefore, we finally obtain
\begin{displaymath}
Z^{(1)} \lesssim M+eX^{(0)}+eY^{(0)}+eZ^{(0)}.
\end{displaymath}

Then, by \eqref{5.22}, we have
\begin{equation*}
Z^{(1)} \lesssim M+eX^{(0)}+eY^{(0)}+eZ^{(1)},
\end{equation*}
which implies
\begin{equation*}
Z^{(1)} \lesssim M+eX^{(0)}+eY^{(0)}.
\end{equation*}

Now we define
\begin{displaymath}
Y^{(1)}_i=\sup\limits_{(T,|x|) \in K_1} r^{\delta-1} \left| \frac{x_i}{R} V \right|
\end{displaymath}
where $x_i$ is the corresponding Cartesian coordinates.

Then, we have
\begin{displaymath}
Y^{(1)} \lesssim \sum_i Y^{(1)}_i,
\end{displaymath}
and there holds
\begin{equation} \label{5.25}
\Box \frac{x_i}{R}V=\frac{x_i}{R}F.
\end{equation}

Recalling the proof of Lemma \ref{lem5.3}, it is not difficult to find that: to estimate $Y^{(1)}_i$ similarly, we only need to estimate ${\left(\int_{-v}^{v} r^{1+2\delta} (\frac{x_i}{R}F_j)^2(u,v) du\right)}^{\frac{1}{2}} $.

Then, we have for $F_1$,
\begin{eqnarray*}
\int_{-v}^{v} r^{1+2\delta} (\frac{x_i}{R}F_1)^2(u,v) du & \lesssim & \int_{-v}^{v} r^{1+2\delta} {|V|}^2 {|\phi|}^4 du \\
& \lesssim & \varepsilon {Y^{(1)}}^2.
\end{eqnarray*}
Similarly, we can get same estimates for $F_2,F_3,F_4$.

For $F_{5}$, we have
\begin{eqnarray*}
\int_{-v}^{v} r^{1+2\delta} (\frac{x_i}{R}F_{5})^2(u,v) du & \lesssim & \int_{-v}^{v} r^{1+2\delta} {|W|}^2 {|\phi|}^4 du \\
& \lesssim & \varepsilon (M+{X^{(0)}}^2+{Y^{(0)}}^2+{Z^{(0)}}^2).
\end{eqnarray*}
Estimates for $F_{6}$ is same.

For $F_7$, we have
\begin{eqnarray*}
\int_{-v}^{v} r^{1+2\delta} (\frac{x_i}{R}F_7)^2(u,v) du & \lesssim & \int_{-v}^{v} R^{2\delta-1} {\left(\frac{2r_u R}{r}+1\right)}^2 {|V_v|}^2 du \\
& \lesssim & \varepsilon {X^{(1)}}^2\int_{-v}^{v} R du \\
& \lesssim & \varepsilon {X^{(1)}}^2.
\end{eqnarray*}
We can estimate $F_{10}$ in a similar way,
\begin{eqnarray*}
\int_{-v}^{v} r^{1+2\delta} (\frac{x_i}{R}F_{10})^2(u,v) du & \lesssim & \int_{-v}^{v} R^{2\delta-1} {\left(\frac{2r_v R}{r}-1\right)}^2 {V_u}^2 du \\
& \lesssim & \int_{-v}^{v} R^{2\delta+1}  {V_u}^2 du \\
& \lesssim & e {Z^{(1)}}^2.
\end{eqnarray*}

For $F_8$, we have
\begin{eqnarray*}
\int_{-v}^{v} r^{1+2\delta} (\frac{x_i}{R}F_8)^2(u,v) du & \lesssim & \int_{-v}^{v} R^{2\delta-3} {|1+\frac{2r_u R^2}{r^2} r_v-\frac{2r_u R^2}{r^2}r_u|  }^2 {|U_v|}^2 du \\
& \lesssim & \int_{-v}^{v} R^{2\delta+\frac{1}{2}}  {|U_v|}^2 du \\
& \lesssim & \int_{-v}^{v} R^{\frac{1}{2}}   du \\
& \lesssim & e,
\end{eqnarray*}
and for $F_{11}$, we obtain
\begin{eqnarray*}
\int_{-v}^{v} r^{1+2\delta} (\frac{x_i}{R}F_{11})^2(u,v) du & \lesssim & \int_{-v}^{v} R^{2\delta+\frac{1}{2}}  {|U_u|}^2 du \\
& \lesssim & \varepsilon.
\end{eqnarray*}

Then, for $F_9$, we have
\begin{eqnarray*}
\int_{-v}^{v} r^{1+2\delta} (\frac{x_i}{R}F_9)^2(u,v) du & \lesssim & \int_{-v}^{v} R^{2\delta-1} {|P|}^2 {|U_v|}^2 du \\
& \lesssim & \int_{-v}^{v} R^{-1} {|P|}^2  du \\
& \lesssim & e(M+{X^{(0)}}^2+{Y^{(0)}}^2+{Z^{(0)}}^2) \int_{-v}^{v} R^{2-2\delta}   du \\
& \lesssim & e(M+{X^{(0)}}^2+{Y^{(0)}}^2+{Z^{(0)}}^2) ,
\end{eqnarray*}
similarly, for $F_{12}$,
\begin{eqnarray*}
\int_{-v}^{v} r^{1+2\delta} (\frac{x_i}{R}F_{12})^2(u,v) du & \lesssim & \int_{-v}^{v} R^{2\delta-1} {|Q|}^2 {|U_u|}^2 du \\
& \lesssim & e(M+{X^{(0)}}^2+{Y^{(0)}}^2+{Z^{(0)}}^2) \int_{-v}^{v} R^{2}  {|U_u|}^2 du \\
& \lesssim & e(M+{X^{(0)}}^2+{Y^{(0)}}^2+{Z^{(0)}}^2).
\end{eqnarray*}

Then we have
\begin{displaymath}
\int_{-v}^{v} r^{1+2\delta} {(\frac{x_i}{r}F_j)}^2(u,v) du  \lesssim M+e{X^{(0)}}^2+e {Y^{(0)}}^2.
\end{displaymath}

Thus, we can get estimates for $Y^{(1)}_i$ as we did in the proof of Lemma \ref{lem5.3}:
\begin{displaymath}
Y^{(1)}_i \lesssim  \left(M+e X^{(0)}+eY^{(0)}\right).
\end{displaymath}
Therefore,
\begin{equation*}
Y^{(1)} \lesssim \sum_i Y^{(1)}_i \lesssim  \left(M+e X^{(0)}+eY^{(0)}\right).
\end{equation*}
Then, by \eqref{5.21}, we have
\begin{equation} \label{5.27}
Y^{(1)}\lesssim  \left(M+e X^{(0)}\right).
\end{equation}

Next, for $X^{(1)}$, rewrite equation \eqref{5.19} in null coordinates,
\begin{displaymath}
-4{V}_{uv}+2\frac{{V}_v-{V}_u}{v-u}=F+\frac{V}{R^2}.
\end{displaymath}

Multiplying by ${R}^{\frac{1}{2}}$, we get
\begin{displaymath}
-4\partial_u \left( {R}^{\frac{1}{2}} {V}_v \right) - \frac{{V}_u}{{R}^{\frac{1}{2}}} =F {R}^{\frac{1}{2}}+\frac{V}{R^{\frac{3}{2}}}.
\end{displaymath}
Integrating the above equation with respect to $u$, we get
\begin{eqnarray*}
-4{|x|}^{\frac{1}{2}} {V}_v (T,|x|)&=&-4{(T+|x|)}^{\frac{1}{2}} {V}_v(0,T+|x|)+\int_{-T-|x|}^{T-|x|} \frac{{V}_u}{{R}^{\frac{1}{2}}}(u,T+|x|) du \\
&&+ \int_{-T-|x|}^{T-|x|}F(u,T+|x|){R}^{\frac{1}{2}}  du+\int_{-T-|x|}^{T-|x|}\frac{V}{R^{\frac{3}{2}}}(u,T+|x|)du.
\end{eqnarray*}

By the regularity of the initial data,
\begin{equation*}
|{(T+|x|)}^{\frac{1}{2}} {V}_v(-v,v)| \leq M{|x|}^{\frac{1}{2}-\delta}.
\end{equation*}

Then, we  only need to consider the last two terms, other terms can be estimated in the same way as we did before. Firstly, we need to estimate $\int_{-T-|x|}^{T-|x|} F_j{R}^{\frac{1}{2}} du$. For $F_1$, we have
\begin{eqnarray*}
\int_{-T-|x|}^{T-|x|} |F_1|{R}^{\frac{1}{2}} du & \lesssim & \int_{-T-|x|}^{T-|x|} |V| \phi^2{R}^{\frac{1}{2}} du \\
& \lesssim &Y^{(1)} \int_{-T-|x|}^{T-|x|} \phi^2{R}^{\frac{1}{2}+1-\delta} du \\
& \lesssim &eY^{(1)}{|x|}^{\frac{1}{2}-\delta} \\
& \lesssim &e(M+eX^{(0)}){|x|}^{\frac{1}{2}-\delta}.
\end{eqnarray*}
We can get same estimates for $F_2,F_3,F_4$.

For $F_{5}$,
\begin{eqnarray*}
\int_{-T-|x|}^{T-|x|} |F_{5}|{R}^{\frac{1}{2}} du & \lesssim & \int_{-T-|x|}^{T-|x|} |W| \phi^2{R}^{\frac{1}{2}} du \\
& \lesssim &L^{(1)} \int_{-T-|x|}^{T-|x|} \phi^2{R}^{\frac{1}{2}+1-\delta} du \\
& \lesssim &eL^{(1)}{|x|}^{\frac{1}{2}-\delta} \\
& \lesssim &e(M+eX^{(0)}){|x|}^{\frac{1}{2}-\delta}.
\end{eqnarray*}
Estimates for $F_{6}$ is same.

Then, for $F_7$ we have
\begin{eqnarray*}
\int_{-T-|x|}^{T-|x|} |F_7|{R}^{\frac{1}{2}} du & \lesssim & \int_{-T-|x|}^{T-|x|} |V_v| |\frac{2r_uR}{r}+1|{R}^{-\frac{1}{2}} du \\
& \lesssim &eX^{(1)} \int_{-T-|x|}^{T-|x|} {R}^{-\frac{1}{2}-\delta} du \\
& \lesssim &eX^{(0)}{|x|}^{\frac{1}{2}-\delta}.
\end{eqnarray*}
Similarly, for $F_{10}$ we have
\begin{eqnarray*}
\int_{-T-|x|}^{T-|x|} |F_{10}|{R}^{\frac{1}{2}} du & \lesssim & \int_{-T-|x|}^{T-|x|} |V_u| |\frac{2r_vR}{r}-1|{R}^{-\frac{1}{2}} du \\
& \lesssim &e \int_{-T-|x|}^{T-|x|} |V_u| R du \\
& \lesssim &eZ^{(1)}\\
& \lesssim &eZ^{(1)}{|x|}^{\frac{1}{2}-\delta}\\
& \lesssim &e(M+eX^{(0)}){|x|}^{\frac{1}{2}-\delta}.
\end{eqnarray*}

For $F_8$, there holds
\begin{eqnarray*}
\int_{-T-|x|}^{T-|x|} |F_8|{R}^{\frac{1}{2}} du & \lesssim & \int_{-T-|x|}^{T-|x|} |U_v| |-\frac{1}{R^2}-\frac{2r_u}{r^2}(r_v-r_u)|{R}^{\frac{1}{2}} du \\
& \lesssim &\int_{-T-|x|}^{T-|x|}  |R^2\cdot(-\frac{1}{R^2}-\frac{2r_u}{r^2}(r_v-r_u))|{R}^{-\frac{3}{2}-\delta} du  \\
& \lesssim &\varepsilon \int_{-T-|x|}^{T-|x|}  {R}^{-\frac{1}{2}-\delta} du \\
& \lesssim &e{|x|}^{\frac{1}{2}-\delta}.
\end{eqnarray*}
Similarly, for $F_{11}$, we can get
\begin{eqnarray*}
\int_{-T-|x|}^{T-|x|} |F_{11}|{R}^{\frac{1}{2}} du & \lesssim &  \int_{-T-|x|}^{T-|x|} |U_u| |R^2\cdot(\frac{1}{R^2}-\frac{2r_v}{r^2}(r_v-r_u))|{R}^{-\frac{3}{2}} du  \\
& \lesssim &\varepsilon \int_{-T-|x|}^{T-|x|}  {R}^{\frac{1}{4}}|U_u| du \\
& \lesssim &\varepsilon {\left(\int_{-T-|x|}^{T-|x|}  {R}^{-\frac{1}{2}} du\right)}^{\frac{1}{2}} \\
& \lesssim &\varepsilon {\left(\int_{-T-|x|}^{T-|x|}  {R}^{-2\delta}du\right)}^{\frac{1}{2}} \\
& \lesssim &e{|x|}^{\frac{1}{2}-\delta}.
\end{eqnarray*}

Finally, for $F_9$ we have
\begin{eqnarray*}
\int_{-T-|x|}^{T-|x|} |F_9|{R}^{\frac{1}{2}} du & \lesssim & \int_{-T-|x|}^{T-|x|} |U_v| |P|{R}^{-\frac{1}{2}} du \\
& \lesssim &e(M+X^{(0)}+Y^{(0)}+Z^{(0)}) \int_{-T-|x|}^{T-|x|} R^{-\frac{1}{2}-\delta} du \\
& \lesssim &e(M+X^{(0)}+Y^{(0)}+Z^{(0)}){|x|}^{\frac{1}{2}-\delta}\\
& \lesssim &(M+eX^{(0)}){|x|}^{\frac{1}{2}-\delta},
\end{eqnarray*}
and similarly for $F_{12}$,
\begin{eqnarray*}
\int_{-T-|x|}^{T-|x|} |F_{12}|{R}^{\frac{1}{2}} du & \lesssim & \int_{-T-|x|}^{T-|x|} |U_u| |Q|{R}^{-\frac{1}{2}} du \\
& \lesssim &e(M+X^{(0)}+Y^{(0)}+Z^{(0)}) \int_{-T-|x|}^{T-|x|} |U_u| R^{1-\delta} du \\
& \lesssim &e(M+X^{(0)}+Y^{(0)}+Z^{(0)}){\left(\int_{-T-|x|}^{T-|x|}  R^{1-2\delta} du\right)}^{\frac{1}{2}}\\
& \lesssim &e(M+X^{(0)}+Y^{(0)}+Z^{(0)}){\left(\int_{-T-|x|}^{T-|x|}  R^{-2\delta} du\right)}^{\frac{1}{2}}\\
& \lesssim &e(M+X^{(0)}+Y^{(0)}+Z^{(0)}){|x|}^{\frac{1}{2}-\delta}\\
& \lesssim &(M+eX^{(0)}){|x|}^{\frac{1}{2}-\delta}.
\end{eqnarray*}

For the last term, we have
\begin{eqnarray*}
\int_{-T-|x|}^{T-|x|}\frac{V}{R^{\frac{3}{2}}}du
& \lesssim &Y^{(1)} \int_{-T-|x|}^{T-|x|} R^{-\frac{1}{2}-\delta} du \\
& \lesssim &(M+eX^{(0)}) \int_{-T-|x|}^{T-|x|} R^{-\frac{1}{2}-\delta} du\\
& \lesssim &(M+eX^{(0)}){|x|}^{\frac{1}{2}-\delta}.
\end{eqnarray*}

Thus, as we did before to estimate $X$, we can obtain
\begin{equation*}
X^{(1)} \leq M+e X^{(0)}.
\end{equation*}
By \eqref{5.20}, we have
\begin{equation*}
 X^{(1)} \leq M+e X^{(1)},
\end{equation*}
which implies
\begin{equation*}
 X^{(1)} \leq M.
\end{equation*}

Thus, we finally obtain
\begin{equation} \label{5.28}
X^{(0)} \leq M.
\end{equation}
Then, by a result of Christodoulou-Tahvildar-Zadeh \cite{christodoulou2}, it follows that the full gradients $\partial U$ is bounded.

By \eqref{5.23} and \eqref{5.24}, we have
\begin{equation} \label{5.29}
\sup\limits_{K_0} r^{\delta} |W_v| \leq M,
\end{equation}
and
\begin{equation} \label{5.30}
|W|=|\lambda_R| \leq M r^{1-\delta} \leq M.
\end{equation}

Using Lemma \ref{remark3.10} and \eqref{5.29}, we have
\begin{eqnarray} \label{5.31}
|\lambda_v (T,R)| &\leq& \int_0^R |W_v(T,R')| dR' \\ \nonumber
&\leq& M\int_0^R {(R')}^{-\delta} dR' \\ \nonumber
&\leq& M.
\end{eqnarray}

Now we have shown that the full gradients of all unknowns are bounded. Then the higher regularity of the unknowns follows using energy estimates. Thus, the proof of Theorem \ref{thm5.1} is finished now.

\section*{Acknowledgement}
Both authors are grateful to Prof. Naqing Xie for fruitful discussions. The first author especially thanks him for his kind guidance.

Y. Zhou was supported by Key Laboratory of Mathematics for Nonlinear Sciences (Fudan University), Ministry of Education of China, P.R.China. Shanghai Key Laboratory for Contemporary Applied Mathematics, School of Mathematical Sciences, Fudan University, P.R. China, NSFC (grants No. 11421061, grants No.11726611, grants No. 11726612), 973 program (grant No. 2013CB834100) and 111 project.

\end{document}